\documentclass[hidelinks,onefignum,onetabnum]{siamart250211}


\usepackage{lipsum}
\usepackage{amsfonts}
\usepackage{graphicx}
\usepackage{epstopdf}
\usepackage{algorithmic}
\usepackage{microtype}
\usepackage{cite}
\ifpdf
\DeclareGraphicsExtensions{.eps,.pdf,.png,.jpg}
\else
\DeclareGraphicsExtensions{.eps}
\fi


\newsiamremark{remark}{Remark}
\newsiamremark{hypothesis}{Hypothesis}
\crefname{hypothesis}{Hypothesis}{Hypotheses}

\newsiamthm{claim}{Claim}
\newsiamremark{fact}{Fact}
\crefname{fact}{Fact}{Facts}

\allowdisplaybreaks[4]
\usepackage{mathtools}
\usepackage{enumerate}
\usepackage{booktabs}
\usepackage{subfigure}
\usepackage[percent]{overpic}

\usepackage{multirow}
\newcommand{\tabincell}[2]{\begin{tabular}{@{}#1@{}}#2\end{tabular}}

\newsiamremark{assumption}{Assumption}
\crefname{assumption}{Assumption}{Assumptions}
\newsiamremark{example}{Example}
\crefname{example}{Example}{Examples}
\def\R{{\mathbb{R}}}
\def\tr{{\rm tr}}

\newcommand{\changescolor}{\color{black}}

\headers{A smoothing moving balls approximation method}{J. Xu, T. K. Pong{\and} N.-S. Sze}

\title{A smoothing moving balls approximation method for a class of conic-constrained difference-of-convex optimization problems\thanks{Submitted to the editors \today.
		\funding{Ting Kei Pong is supported partly by the Hong Kong Research Grants Council PolyU 15300423. Nung-sing Sze is supported partly by the Hong Kong Research Grants Council PolyU 15300121 and a PolyU research grant 4-ZZRN.}}}

\author{Jiefeng Xu\thanks{Department of Applied Mathematics, the Hong Kong Polytechnic University, Hong Kong, People's Republic of China
		(\email{jiefeng.xu@polyu.edu.hk}).}
\and Ting Kei Pong\thanks{Department of Applied Mathematics, the Hong Kong Polytechnic University, Hong Kong, People's Republic of China (\email{tk.pong@polyu.edu.hk}).}
\and Nung-Sing Sze\thanks{Department of Applied Mathematics, the Hong Kong Polytechnic University, Hong Kong, People's Republic of China (\email{raymond.sze@polyu.edu.hk}).}
}

\usepackage{amsopn}


\ifpdf
\hypersetup{
	pdftitle={An Example Article},
	pdfauthor={D. Doe, P. T. Frank, and J. E. Smith}
}
\fi


\externaldocument[][nocite]{ex_supplement}


\begin{document}
	
	\maketitle
	
	\begin{abstract}
		In this paper, we consider the problem of minimizing a difference-of-convex objective over a nonlinear conic constraint, where the cone is closed, convex, pointed and has a nonempty interior. We assume that the support function of a compact base of the polar cone exhibits a {\em majorizing smoothing approximation}, a condition that is satisfied by widely studied cones such as $\R^m_-$ and ${\cal S}^m_-$. Leveraging this condition, we reformulate the conic constraint equivalently as a {\em single} constraint involving the aforementioned support function, and
adapt the moving balls approximation (MBA) method for its solution. In essence, in each iteration of our algorithm, we approximate the support function by a smooth approximation function and apply one MBA step. The subproblems that arise in our algorithm always involve only one {\em single} inequality constraint, and can thus be solved efficiently via {\em one-dimensional} root-finding procedures. We design explicit rules to evolve the smooth approximation functions from iteration to iteration and establish the corresponding iteration complexity for obtaining an {\changescolor $(\epsilon_1, \epsilon_2, \epsilon_1 \sqrt{\epsilon_2})$}-Karush-Kuhn-Tucker point. In addition, in the convex setting, we establish convergence of the sequence generated, and study its local convergence rate under a standard H\"olderian growth condition. Finally, we {\changescolor perform numerical experiments to illustrate the performance of our algorithm.}
	\end{abstract}
	
	\begin{keywords}
		Conic-constrained optimization, moving balls approximation, smoothing, difference-of-convex optimization
	\end{keywords}
	
	\begin{MSCcodes}
		65Y20, 90C06, 90C26, 90C55
	\end{MSCcodes}
	
	\section{Introduction}
	Nonlinear conic programming plays a central role in a wide range of modern applications, including robust optimal control \cite{98BCH}, portfolio optimization \cite{98LVBL}, and structural optimization \cite{04KS}, etc. We refer the readers to \cite{11AL, 12WSV, 15YY} and the references therein for more discussions.

	In this paper, we consider the following conic-constrained difference-of-convex (DC) minimization
	problem
	\begin{equation}\label{opt-conic-dc}
		\begin{array}{rl}
			\min\limits_{x\in {\mathbb X}}& \psi(x) \coloneqq  f(x) + P_1(x) - P_{2}(x)
			\\ \text{s.t.}& G(x) \in \mathcal{K},
		\end{array}
	\end{equation}
	where ${\cal K}\subseteq \mathbb{Y}$, ${\mathbb X}$ and ${\mathbb Y}$ are finite dimensional real Hilbert spaces,
	and $f$, $P_1$, $P_2$, $G$ and ${\cal K}$ satisfy the following assumption (see section~\ref{sec2} for notation).
	\begin{assumption}\label{ass-gen} In problem \eqref{opt-conic-dc},
		\begin{enumerate}[{\rm (i)}]
			\item the set $\mathcal{K} \subseteq {\mathbb Y}$ is a closed convex pointed cone with a nonempty interior;
			\item the function $f:{\mathbb X}\rightarrow \mathbb{R}$ and the mapping
			$G:{\mathbb X}\rightarrow {\mathbb Y}$ are $L_{f}$-smooth and $L_G$-smooth with constants $L_f>0$ and $L_G\ge 0$, respectively;			
			\item the functions $P_1:\mathbb{X}\to \mathbb{R}$ and $P_{2}:{\mathbb X}\rightarrow \mathbb{R}$ are convex;			
			\item there exists an $x^0\in {\mathbb X}$ such that $G(x^{0}) \in \mathrm{int}(\mathcal{K})$ and the set $\Omega_{0}\coloneqq  \{x \in {\mathbb X}: \psi(x) \leq \psi(x^0),\, G(x)\in
			\mathcal{K}\}$ is bounded;
			\item the Robinson's Constraint Qualification (RCQ) holds in $\Omega_{0}$, i.e.,
			\begin{equation*}
				0 \in \mathrm{int}\left( G(x) + DG(x) {\mathbb X} - \mathcal{K}\right)\ \ \ \forall x\in \Omega_0.
			\end{equation*}
		\end{enumerate}
	\end{assumption}	
Notice that under Assumption~\ref{ass-gen}, the solution set of \eqref{opt-conic-dc} is nonempty.

Model \eqref{opt-conic-dc} captures many important instances such as nonlinear programs, nonlinear semidefinite programs {\changescolor (NSDPs)}, and nonlinear second order cone programs, etc. Various algorithms have been developed to address such optimization problems, including interior point methods (see, e.g., \cite{94NN,00Jarre,15YY,23AOT,23HL}), augmented Lagrangian based methods (see, e.g., \cite{69Powell,76Rockafellar,23LZ,21Xu,18FL,08SSZ}) and methods based on sequential quadratic programming methods (see, {\changescolor e.g., \cite{09Solodov,10FS,13A,21FKLS,07KF,04CR}}), etc. 
Here, we are interested in feasible methods (i.e., all iterates satisfy $G(x)\in {\cal K}$) that invoke only first-order information such as $\nabla f$, $DG$, $\partial P_2$, the proximal mapping of $\gamma P_1$ for some $\gamma > 0$, and possibly the projection onto ${\cal K}$.

When $\mathcal{K} = \mathbb{R}^m_{-}$, a representative class of feasible first-order methods is the moving balls approximation (MBA) method and its variants \cite{10AST,16BP,16ST,20BCP,21YPL}. The MBA method was proposed in \cite{10AST} for \eqref{opt-conic-dc} with ${\cal K} = \mathbb{R}^m_{-}$ and $P_1=P_2 = 0$. The key idea is to replace $f$ and each component function of $G$ by a quadratic majorant (with the Hessian being a multiple of identity) at the current iterate, forming a special quadratically constrained quadratic programming subproblem with $m$ inequality constraints. Convergence of the whole sequence was established under suitable assumptions such as convexity or the Kurdyka-{\L}ojasiewicz property \cite{10AST,16BP}. Later, linesearch techniques were incorporated in \cite{20BCP,21YPL}, and an MBA variant was developed in \cite{21YPL} for \eqref{opt-conic-dc} with ${\cal K} = \mathbb{R}^m_{-}$. In the aforementioned works, the MBA subproblems were assumed to be solved {\em exactly} in their convergence analysis. This is an innocuous assumption when $m = 1$ and the proximal mapping of $\gamma P_1$ can be computed efficiently for all $\gamma > 0$ since the subproblems can then be solved efficiently via some {\em one-dimensional} root-finding procedures.\footnote{\changescolor More importantly, these subproblems admit closed form formulae when $P_1 = 0$, and can be solved efficiently and {\em exactly} when $P_1$ is the $\ell_1$ norm, which corresponds to finding roots of {\em piecewise linear quadratic} equations; see \cite[Appendix~A]{21YPL}.} However, when $m > 1$, the corresponding MBA subproblems typically require an iterative solver (see, e.g., \cite[section~6]{10AST}), and the inexactness in solving the subproblems has to be taken into account for a better understanding of the practical behavior of MBA and its variants.

The two recent works that considered MBA variants with inexactly solved subproblems are \cite{25BDL} and \cite{25LPB}. In \cite{25BDL}, they considered \eqref{opt-conic-dc} with $P_2 = 0$, ${\cal K} = \R^m_-$ and a more general $G$ that is allowed to be nonsmooth. Supposing that the subproblems of their algorithm are solved up to an inexact criterion as in \cite[Definition~7]{25BDL}, they derived the {\em overall} computational complexity of their method, including the cost for inexactly solving the subproblems, for obtaining an $\epsilon$-Karush-Kuhn-Tucker ($\epsilon$-KKT) point; see Corollaries 3, 4 and 5 of \cite{25BDL}. More recently, carefully designed implementable termination criteria for their MBA subproblems were proposed in \cite{25LPB} for \eqref{opt-conic-dc} with ${\cal K} = \R^m_-$, and the iteration complexity (of the outer loop) and asymptotic convergence properties were established. However, it is not immediately clear whether these termination criteria can be readily extended to deal with \eqref{opt-conic-dc} for more general cones. Recalling that the MBA subproblems for \eqref{opt-conic-dc} with ${\cal K}=\R^m_-$ can be solved efficiently when $m = 1$ and the proximal mapping of $\gamma P_1$ can be computed efficiently for all $\gamma > 0$, can we {\em enhance} the MBA method for \eqref{opt-conic-dc} so that all the subproblems that arise will involve only {\em one} single inequality constraint, at least for some special class of cones ${\cal K}$?

Our point of departure is the fact that for a closed convex pointed cone that has a nonempty interior,  its polar cone
	has a compact base;\footnote{A base for a cone $\mathcal{Q}\subseteq \mathbb{Y}$ is a convex set $\mathcal{D}$ with $0\notin
		\mathrm{cl}(\mathcal{D})$ and $\mathcal{Q} = \cup_{t\ge 0} t\mathcal{D}$.} see, e.g., {\cite[Page~60, Exercise 20]{06BL}}. Leveraging this observation,
we reformulate \eqref{opt-conic-dc} equivalently as the following single-constrained optimization problem:
	\begin{equation}\label{opt-supp-dc}
		\begin{array}{rl}
			\min\limits_{x\in {\mathbb X}}& \psi(x) \coloneqq  f(x) + P_1(x) - P_{2}(x)
			\\ \text{s.t.}& g_{\mathcal{B}}(x)\coloneqq \sigma_{\mathcal{B}}(G(x)) \le 0,
		\end{array}
	\end{equation}
	where $\mathcal{B}$ is a compact base of $\mathcal{K}^{\circ}$ {\changescolor and $\sigma_{\mathcal{B}}$ is the support function of $\mathcal B$ (see  section~2.1 for the definition of $\sigma_{\cal B}$)}.\footnote{The equivalence can be seen as an immediate consequence of the last part of Lemma~\ref{lm-base} below.} Our next key assumption is that $\sigma_{\cal B}$ exhibits a family of {\em majorizing smoothing approximation} $\{h_{\mu}\}_{\mu>0}$: this assumption is a specialization of \cite[Definition~2.1]{12BT}, and is satisfied by commonly studied ${\cal K}$ such as $\R^m_-$ and ${\cal S}^m_-$, see Remark~\ref{rem25}(i) below. {\changescolor Leveraging} this assumption {\changescolor and a mild descent assumption (see \eqref{eq-c4})}, we adapt the MBA method for solving \eqref{opt-supp-dc} (and hence, equivalently, \eqref{opt-conic-dc}), which we call the smoothing moving balls approximation method ({\em s}MBA). In each iteration of {\em s}MBA, we approximate $\sigma_{\cal B}$ by a smooth function $h_{\mu_k}$ from the family $\{h_\mu\}_{\mu > 0}$ and apply {\em one} MBA step: this results in subproblems that involve only one {\em single} inequality constraint, which can be solved efficiently via {\changescolor simple} {\em one-dimensional} root-finding procedures as long as the proximal mapping of $\gamma P_1$ can be computed efficiently for all $\gamma > 0$; see, e.g., \cite[Appendix~A]{21YPL} for the case when $P_1$ is the $\ell_1$ norm {\changescolor where such subproblems can be solved efficiently and even {\em exactly}}. 
    Our approach is in contrast with recent
works \cite{25BDL,25LPB} that studied the MBA method applied to \eqref{opt-conic-dc} with ${\cal K} = \R^m_-$, whose subproblems involve possibly multiple inequality constraints, and necessitate iterative subproblem solvers and a careful design of inexact termination criteria for these solvers. We design explicit rules for picking $\{\mu_k\}$ in {\em s}MBA, and establish the corresponding iteration complexity for finding an {\changescolor $(\epsilon_1, \epsilon_2, \epsilon_1 \sqrt{\epsilon_2})$}-KKT point of \eqref{opt-conic-dc}. Furthermore, in the convex setting (i.e., $f$ is convex, $P_2 = 0$ and $G$ is $(-{\cal K})$-convex), we present rules for choosing $\{\mu_k\}$ to guarantee convergence of the whole sequence generated by {\em s}MBA, and study its local convergence rate under a standard H\"olderian growth condition. Finally, we {\changescolor perform numerical experiments to illustrate the performance of $s$MBA}. 
	
The remainder of the paper is organized as follows. We present notation and discuss preliminary materials in section~\ref{sec2}. Our algorithm is presented in section~\ref{sec:alg} and its iteration complexity is studied in section~\ref{sec:complexity}. Sequential convergence and local convergence rate in the convex setting are studied in section~\ref{sec-conv}. {\changescolor Numerical experiments are presented in section~\ref{sec5}. We give some concluding remarks in section~6.}

\section{Notation and preliminaries}\label{sec2}
	\subsection{Notation}
In this paper, we use $\mathbb{X}$ and $\mathbb{Y}$ to denote two finite-dimensional real Hilbert spaces, and we use $\langle \cdot, \cdot \rangle$ and $\| \cdot \|$ to denote the inner product and the associated norm, respectively, of the underlying Hilbert space.
	We let $\mathbb{R}$ denote the set of real numbers and $\mathbb{R}^n$ denote the $n$-dimensional Euclidean space.
	We also let $\mathbb{R}^{n}_{+}$ (resp., $\mathbb{R}^{n}_{-}$) denote the non-negative (resp., non-positive) orthant in $\mathbb{R}^{n}$.
	The set of $m\times n$ matrices is denoted by $\mathbb{R}^{m \times n}$. For two matrices $A, B \in \mathbb{R}^{m \times n}$, their
	(trace) inner product is defined as $\langle A, B \rangle {\changescolor\, \coloneqq} \mathrm{tr}(A^\top B)$, where $\mathrm{tr}$ denotes the trace of a square matrix.
	We also let $\mathcal{S}^n$ denote the space of $n \times n$ symmetric matrices, and let $\mathcal{S}_+^n$ (resp., $\mathcal{S}_{-}^{n}$) denote the cone of $n \times n$ positive (resp., negative) semidefinite matrices. The set of natural numbers is denoted by $\mathbb{N}$ and we write $\mathbb{N}_0 \coloneqq \mathbb{N}\cup\{0\}$.

	We denote the closed ball in $\mathbb{X}$ with center $x\in {\mathbb X}$ and radius $r\ge 0$ by $B(x, r) {\changescolor\, \coloneqq} \{z\in {\mathbb X}: \|z - x\|\le r\}$. For a set $\mathcal{C}\subseteq \mathbb{X}$, we denote the distance from any $x\in \mathbb{X}$ to ${\cal C}$ as $\mathrm{dist}(x, \mathcal{C}) {\changescolor\, \coloneqq} \inf_{z\in \mathcal{C}} \|x-z\|$. The convex hull, closure and interior of ${\cal C}$ are denoted, respectively, by $\mathrm{conv}({\cal C})$, ${\rm cl}({\cal C})$ and ${\rm int}({\cal C})$.
	The support function of ${\cal C}$ is denoted by $\sigma_{\mathcal{C}}$, which is defined as $\sigma_{\mathcal{C}}(x) {\changescolor\, \coloneqq} \sup_{z\in \mathcal{C}}\langle x, z \rangle$ for all $x\in\mathbb{X}$.
	A cone $\mathcal{Q}\subseteq \mathbb{Y}$ is a convex set that contains $0$ and satisfies $\alpha {\cal Q}\subseteq {\cal Q}$ for all $\alpha \ge 0$. A cone ${\cal Q}$ is said to be pointed if $\mathcal{Q} \cap -\mathcal{Q} = \{0\}$.
	For a cone $\mathcal{Q}\subseteq \mathbb{Y}$, its polar is defined as $\mathcal{Q}^{\circ} {\changescolor\, \coloneqq} \{y\in \mathbb{Y} : \langle y, w \rangle \le 0\ \ \forall w\in \mathcal{Q} \}$.

For a convex function $\varphi\colon \mathbb{X}\to \mathbb{R}$, its subdifferential at $x\in \mathbb{X}$ is defined as
	\begin{equation*}
		\partial \varphi(x) {\changescolor\, \coloneqq} \{\xi\in \mathbb{X} : \varphi(z) \ge \varphi(x) + \langle \xi, z-x \rangle\quad \forall z \in \mathbb{X} \}.
	\end{equation*}
	In addition, for every $\epsilon\ge 0$, the $\epsilon$-subdifferential of $\varphi$ at $x\in \mathbb{X}$ is defined as
	\begin{equation*}
		\partial_{\epsilon} \varphi(x) {\changescolor\, \coloneqq} \{\xi \in \mathbb{X} : \varphi(z) \ge \varphi(x) + \langle \xi, z-x \rangle - \epsilon \quad \forall z \in \mathbb{X} \}.
	\end{equation*}
Clearly, it holds that $\partial_0 \varphi(x) = \partial \varphi(x)$.
	
	For a mapping $H: \mathbb{X}\to \mathbb{Y}$, we say that $H$ is $L$-Lipschitz continuous if there exists $L\ge 0$ such that
	$$
	\| H(x) - H(z) \| \le L \|x-z\| \quad \forall x,z\in \mathbb{X}.
	$$
	For a continuously differentiable mapping $H$, we use $DH(x)$ to denote its derivative map at $x\in \mathbb{X}$: this is the linear map defined by
	$$
	DH(x)\,d
	{\changescolor\, \coloneqq}
	\lim_{t\to0}\frac{H(x+t\,d)-H(x)}{t}
	\ \ \ \
	\forall\,d\in\mathbb{X}.
	$$
	The adjoint of $DH(x)$ is denoted by $DH(x)^*$.
	A mapping $H$ is said to be $L_{H}$-smooth ($L_H\ge 0$) if it is continuously differentiable with its derivative $DH$ being $L_H$-Lipschitz continuous with constant $L_H$.
	Finally, we say that $H$ is $\mathcal{Q}$-convex for a cone $\mathcal{Q}\subseteq \mathbb{Y}$ if
	$$
	\lambda H(x) + (1-\lambda) H(z)\in H(\lambda x + (1-\lambda)z) + \mathcal{Q} \quad \forall x,z\in \mathbb{X},\ \lambda \in [0, 1].
	$$


	\subsection{Properties of pointed cone}
	In Assumption~\ref{ass-gen}, $\mathcal{K} \subset {\mathbb Y}$ is a closed convex pointed cone
	with nonempty interior.
	It follows from \cite[Page~60, Exercise 20]{06BL} that $\mathcal{K}^{\circ}$ is pointed, has a nonempty interior, and possesses a compact base $\mathcal{B}$: recall that a base for a cone $\mathcal{Q}\subseteq{\mathbb{Y}}$ is a convex set $\mathcal{D}$ with $0\notin
		\mathrm{cl}(\mathcal{D})$ and $\mathcal{Q} = \cup_{t\ge 0} t\mathcal{D}$. We summarize the key properties of ${\cal K}$ necessary for our development in the next lemma.
	\begin{lemma}\label{lm-base}
		Let $\mathcal{K} \subset {\mathbb Y}$ be a closed convex pointed cone
		with nonempty interior. Then, for every $w\in \mathrm{int}(\mathcal{K})$, the set
		$\mathcal{B}_{w} \coloneqq  \{u \in \mathcal{K}^{\circ} :
		\langle w, u\rangle = -1\}$ is a compact base of $\mathcal{K}^{\circ}$.
		Furthermore, for any compact base ${\cal B}$ of ${\cal K}^\circ$, it holds that $y\in \mathcal{K}$ $(\mbox{resp.,
		}y\in\mathrm{int}(\mathcal{K}))$ if and only if $\sigma_{\mathcal{B}}(y)\le 0$
		$(\mbox{resp., }\sigma_{\mathcal{B}}(y) < 0 )$.
	\end{lemma}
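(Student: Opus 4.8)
The plan is to treat the two assertions separately. For the first assertion, the crux is a strict separation fact: if $w\in \mathrm{int}(\mathcal{K})$, then $\langle w,u\rangle<0$ for every nonzero $u\in\mathcal{K}^{\circ}$. Granting this, I would verify the three defining properties of a base for $\mathcal{B}_w$ together with compactness. For the second assertion, the key tools are the bipolar theorem $(\mathcal{K}^{\circ})^{\circ}=\mathcal{K}$ (valid since $\mathcal{K}$ is closed and convex) together with the fact that a base lets one test membership in $\mathcal{K}^{\circ}$ against $\mathcal{B}$ alone, plus continuity of $\sigma_{\mathcal{B}}$ for the interior statements.

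First I would prove the strict separation fact. Since $w\in\mathcal{K}$ and $u\in\mathcal{K}^{\circ}$, we automatically have $\langle w,u\rangle\le 0$, so only strictness uses $w\in\mathrm{int}(\mathcal{K})$. Choosing $r>0$ with $B(w,r)\subseteq\mathcal{K}$, for any nonzero $u$ the point $w+r\,u/\|u\|$ lies in $\mathcal{K}$, and pairing it with $u\in\mathcal{K}^{\circ}$ yields $\langle w,u\rangle+r\|u\|\le 0$, i.e. $\langle w,u\rangle\le -r\|u\|<0$. With this in hand: convexity of $\mathcal{B}_w$ follows because it is the intersection of the convex set $\mathcal{K}^{\circ}$ with the hyperplane $\{u:\langle w,u\rangle=-1\}$; the condition $0\notin\mathrm{cl}(\mathcal{B}_w)$ follows since the closure stays inside that closed hyperplane, which excludes $0$; and the covering identity $\mathcal{K}^{\circ}=\bigcup_{t\ge 0}t\,\mathcal{B}_w$ follows by writing each nonzero $u\in\mathcal{K}^{\circ}$ as $t\,(u/t)$ with $t=-\langle w,u\rangle>0$, noting $u/t\in\mathcal{B}_w$. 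Compactness reduces to boundedness: a sequence in $\mathcal{B}_w$ with norms tending to infinity would, after normalizing, converge to a unit vector $v\in\mathcal{K}^{\circ}$ (closedness of $\mathcal{K}^{\circ}$) with $\langle w,v\rangle=0$, contradicting the strict separation fact.

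Next I would establish the equivalences. By the bipolar theorem, $y\in\mathcal{K}$ iff $\langle y,u\rangle\le 0$ for all $u\in\mathcal{K}^{\circ}$; using $\mathcal{K}^{\circ}=\bigcup_{t\ge 0}t\,\mathcal{B}$ and the sign-preservation of scaling by $t\ge 0$, this is equivalent to $\langle y,u\rangle\le 0$ for all $u\in\mathcal{B}$, i.e. $\sigma_{\mathcal{B}}(y)\le 0$. For the interior statement, if $y\in\mathrm{int}(\mathcal{K})$ I pick $r>0$ with $B(y,r)\subseteq\mathcal{K}$; the same trick used for $w$ above gives $\langle y,u\rangle\le -r\|u\|$ for every $u\in\mathcal{B}$, and since $\mathcal{B}$ is compact with $0\notin\mathrm{cl}(\mathcal{B})$ we have $d\coloneqq\min_{u\in\mathcal{B}}\|u\|>0$, whence $\sigma_{\mathcal{B}}(y)\le -rd<0$. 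Conversely, if $\sigma_{\mathcal{B}}(y)<0$, continuity of $\sigma_{\mathcal{B}}$ (support function of a compact set, hence Lipschitz) provides a ball around $y$ on which $\sigma_{\mathcal{B}}\le 0$; by the already-proven characterization this ball lies in $\mathcal{K}$, so $y\in\mathrm{int}(\mathcal{K})$.

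The main obstacle I anticipate is isolating and correctly exploiting the strict separation fact, since it is precisely where the interior hypothesis on $w$ (resp.\ $y$) enters and it is reused three times: for the covering identity, for compactness of $\mathcal{B}_w$, and for the strictness in $\sigma_{\mathcal{B}}(y)<0$. The remaining points—convexity, the closure argument, and continuity of $\sigma_{\mathcal{B}}$—are routine once this fact and the bipolar theorem are available.
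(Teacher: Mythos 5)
Your proof is correct, but it is worth knowing that the paper itself never proves this lemma: it is stated as a summary of known facts, with the existence and properties of a compact base of $\mathcal{K}^{\circ}$ delegated entirely to the citation \cite[Page~60, Exercise 20]{06BL}, and the support-function characterization left without argument. Your write-up is therefore a genuinely different (and more self-contained) route. Its engine is the strict-separation inequality $\langle w,u\rangle \le -r\|u\|$ for $u\in\mathcal{K}^{\circ}$, obtained by testing $u$ against the ball $B(w,r)\subseteq\mathcal{K}$, and you deploy it correctly in all three places where interiority matters: the covering identity $\mathcal{K}^{\circ}=\bigcup_{t\ge 0}t\mathcal{B}_w$, boundedness of $\mathcal{B}_w$ via the normalization-and-limit argument, and the strict bound $\sigma_{\mathcal{B}}(y)\le -r\min_{u\in\mathcal{B}}\|u\|<0$ for $y\in\mathrm{int}(\mathcal{K})$. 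The bipolar theorem $(\mathcal{K}^{\circ})^{\circ}=\mathcal{K}$ and Lipschitz continuity of $\sigma_{\mathcal{B}}$ (which the paper also records, in \eqref{eq-M-B-def} and the surrounding discussion) settle the two equivalences. What the citation buys the paper is brevity; what your argument buys is verifiability without consulting the reference. Two small points you could tighten, neither of which is a real gap: (a) compactness of $\mathcal{B}_w$ also requires closedness, immediate since $\mathcal{B}_w$ is the intersection of the closed set $\mathcal{K}^{\circ}$ with a hyperplane; (b) in the covering identity, the element $u=0$ is covered only via $t=0$, which requires $\mathcal{B}_w\neq\emptyset$; this holds because pointedness of $\mathcal{K}$ (in a nontrivial space $\mathbb{Y}$) forces $\mathcal{K}\neq\mathbb{Y}$, hence $\mathcal{K}^{\circ}\neq\{0\}$, and any nonzero $u\in\mathcal{K}^{\circ}$ rescales into $\mathcal{B}_w$ by your own construction.
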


We present several examples of bases $\mathcal{B}$ (taking the form of ${\cal B}_w$ for some $w\in {\rm int}({\cal K})$) and their corresponding $\sigma_{\mathcal{B}}$, under various choices of the space $\mathbb{Y}$ and cone $\mathcal{K}$, in Table~\ref{tab-const-ex}.
	
	\begin{table}[htbp]
		\footnotesize
		\caption{\noindent Examples of bases of polar cones and their support functions.
			Each base $\mathcal{B} = \mathcal{B}_{w}$ is constructed from a $w\in {\rm int}(\mathcal{K})$, where $\mathcal{B}_{w}$ is defined in Lemma~\ref{lm-base}.
			Here, we denote the $m\times m$ identity matrix by $I_{m}$, the maximum eigenvalue of $y\in \mathcal{S}^{m}$ by $\lambda_{\mathrm{\max}}(y)$ and the $p$-cone by $\mathcal{K}_{m,p} \coloneqq  \left\{(u, t) \in \mathbb{R}^{m} \times \mathbb{R} : \|u\|_{p} \le t \right\}$ with $\|u\|_p := (\sum_{i=1}^m|u_i|^p)^{1/p}$, $p, q \in (1, \infty)$ and $\frac{1}{p} + \frac{1}{q} = 1$. }\label{tab-const-ex}
		\begin{center}
			\begin{tabular}{cccccc}
				\toprule
				${\mathbb Y}$ &  $\mathcal{K}$ & $\mathcal{K}^{\circ}$ & $w$  &  $\mathcal{B}$ & $\sigma_{\mathcal{B}}(y)$ for $y\in \mathbb{Y}$  \\
				\midrule
				$\mathbb{R}^{m}$ & $\mathbb{R}_{-}^{m}$ & $\mathbb{R}_{+}^{m}$ & $-(1,\ldots,1)$ & $\{u\in \mathbb{R}^{m}_{+} :  \sum_{i=1}^{m}u_i = 1\} $ & $\max_{1\le i\le m}y_i$ \\
				$\mathcal{S}^{m}$ & $\mathcal{S}_{-}^{m}$ & $\mathcal{S}_{+}^{m}$ & $-I_{m}$ & $\{u\in \mathcal{S}^{m}_{+} : \mathrm{tr}(u) = 1\}$ & $\lambda_{\max}(y)$ \\
				$\mathbb{R}^{m+1}$ & $\mathcal{K}_{m,p}$ & $-\mathcal{K}_{m,q}$ & $(0, 1)$ & $\{(u, -1) \in \mathbb{R}^{m+1} :  \|u\|_{q}\le 1\} $ &  $\sqrt[p]{\sum_{i=1}^{m}|y_i|^{p}} - y_{m+1}$ \\
				\bottomrule
			\end{tabular}
		\end{center}
	\end{table}
	
	For a compact base $\mathcal{B}$ of ${\cal K}^\circ$, $\sigma_{\mathcal{B}}$ is convex and real-valued on ${\mathbb Y}$.
	Combining this with \cite[Theorem 2.4.14]{02Zalinescu}, we have for every $\epsilon\ge 0$ that
	\begin{equation}\label{eq-sig-subdiff}
		\partial_{\epsilon} \sigma_{\mathcal{B}}(y)
		= \left\{u \in \mathcal{B}: \langle u, y \rangle \ge \sigma_{\mathcal{B}}(y) -
		\epsilon\right\} \quad \forall\, y\in {\mathbb Y}.
	\end{equation}
	This implies $\partial_{\epsilon} \sigma_{\mathcal{B}}(y) \subseteq
	\mathcal{B}$ for all $\epsilon\ge 0$ and all $y\in{\mathbb Y}$.
	In particular, this implies that $\sigma_{\mathcal{B}}$ is globally Lipschitz
	continuous with constant $M_{\mathcal{B}}>0$ (since $0\notin {\cal B}$) given by
	\begin{equation}\label{eq-M-B-def}
		M_{\mathcal{B}}\coloneqq \sup_{u\in \mathcal{B}}\|u\| < \infty.
	\end{equation}
	
	\subsection{Support function and its majorizing smoothing approximation}
	The following definition of majorizingly smoothable functions is adapted from \cite[Definition~2.1]{12BT}. Specifically, it is obtained by setting $\beta_1=0$ in \cite[Definition 2.1]{12BT}, meaning that we only consider those $h_\mu$ that majorize $h$.
	\begin{definition}
		A convex function $h\!:\!{\mathbb Y}\!\to\! \mathbb{R}$ is said to be $(\alpha_{1}, \alpha_{2}, \alpha_{3})$-majorizingly smoothable for some $\alpha_1\ge 0$, $\alpha_2 > 0$ and $\alpha_3 > 0$, if there exists a family $\{h_{\mu}\}_{\mu > 0}\subset
		C^{1}({\mathbb Y})$, referred to as
		a majorizing smoothing approximation (MSA) of $h$, such that the following conditions hold:
		\begin{enumerate}[{\rm (i)}]
			\item the function $h_{\mu}$ is convex, differentiable, and its gradient is $(\alpha_{1} + \alpha_{2} / \mu)$-Lipschitz continuous, for every $\mu > 0$;
			
			\item it holds that
			\begin{equation}\label{eq-sm-appr-bd}
				h(y) \leq h_{\mu}(y) \leq h(y) + \mu \alpha_{3}\quad
				\forall\, y\in{\mathbb Y},\,\mu>0.
			\end{equation}			
		\end{enumerate}
	\end{definition}
	
	From \eqref{eq-sm-appr-bd}, we see in particular that $\lim_{\mu \downarrow 0}h_\mu(y) = h(y)$ for all $y\in \mathbb{Y}$. Some concrete examples of majorizingly smoothable functions with explicitly defined MSAs are presented in Table~\ref{tab-sm-ex} below.
	
	\begin{table}[htbp]
		\footnotesize
		\caption{\noindent Some majorizingly smoothable functions $h$.
			Example~1 is adapted from \cite[Example 4.4]{12BT};
			Example~2 is based on \cite[Eq.~(10), Eq.~(17)]{07Nesterov} and \cite[Theorem 4.2]{12BT}, where $\lambda_i(y)$ is the $i$th largest eigenvalue of $y$;
			Example~3 is from \cite[Example 4.6]{12BT}. We assume $m\ge 2$ in Examples 1 and 2.}\label{tab-sm-ex}
		\begin{center}
			\begin{tabular}{ccccc}
				\toprule
				Example & ${\mathbb Y}$ & $h(y)$ & $h_{\mu}(y)$ & $(\alpha_1, \alpha_2, \alpha_3)$\\
				\midrule
				1 &  $\mathbb{R}^{m}$ & $\max_{1 \le i \le m}y_i$
				& $\mu \log \left(\sum_{i=1}^{m}e^{y_i/\mu}\right)$
				& $(0, 1,\log(m))$\\
				2 &  $\mathcal{S}^{m}$ & $\max_{1 \le i \le m}\lambda_i(y)$
				& $\mu \log \left(\sum_{i=1}^{m}e^{\lambda_i(y)/\mu}\right)$
				& $(0, 1,\log(m))$\\
				3 &  $\mathbb{R}^{m}$ & $\sum_{i=1}^m |y_i|$
				& $\sum_{i=1}^{m}\sqrt{y_i^2 + \mu^2}$
				& $(0, 1, m)$\\
				\bottomrule
			\end{tabular}
		\end{center}
	\end{table}

	The following proposition describes a relationship between the gradients of an MSA $\{h_\mu\}$ and the $\epsilon$-subdifferential of $h$.
	\begin{proposition}\label{prop-eps-subdiff}
		Let $h
		:{\mathbb Y}\rightarrow \mathbb{R}$ be a convex function.
		If $\{h_{\mu}\}_{\mu>0}$ is an MSA of
		$h$ with parameters $(\alpha_{1},\, \alpha_{2},\, \alpha_{3})$, then $\nabla h_{\mu}(y) \in \partial_{\alpha_3 \mu} h (y)$ for every
		$y\in{\mathbb Y}$ and $\mu>0$.
	\end{proposition}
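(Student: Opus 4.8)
The plan is to exploit the convexity and differentiability of $h_\mu$ together with the two-sided sandwich bound \eqref{eq-sm-appr-bd}, and then simply chain the resulting inequalities. Fix $y\in{\mathbb Y}$ and $\mu>0$. By the definition of $\partial_{\alpha_3\mu}h(y)$, it suffices to verify
\[
h(z) \ge h(y) + \langle \nabla h_\mu(y),\, z - y\rangle - \alpha_3\mu \quad \forall\, z\in{\mathbb Y}.
\]

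First I would invoke the gradient inequality for the convex differentiable function $h_\mu$ at the base point $y$: for every $z\in{\mathbb Y}$,
\[
h_\mu(z) \ge h_\mu(y) + \langle \nabla h_\mu(y),\, z-y\rangle.
\]
Next I would insert the sandwich bound \eqref{eq-sm-appr-bd} at the two points $y$ and $z$, each in the appropriate direction: the left inequality of \eqref{eq-sm-appr-bd} applied at $y$ gives $h_\mu(y) \ge h(y)$, while the right inequality applied at $z$ gives $h(z) \ge h_\mu(z) - \alpha_3\mu$. Combining these three estimates in order yields
\[
h(z) \ge h_\mu(z) - \alpha_3\mu \ge h_\mu(y) + \langle \nabla h_\mu(y),\, z-y\rangle - \alpha_3\mu \ge h(y) + \langle \nabla h_\mu(y),\, z-y\rangle - \alpha_3\mu,
\]
which is precisely the required inequality, whence $\nabla h_\mu(y)\in \partial_{\alpha_3\mu}h(y)$.

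There is no substantive obstacle here; the argument is a three-term chaining of known inequalities. The only point requiring care is the bookkeeping of the direction of each bound extracted from \eqref{eq-sm-appr-bd}: one must use the lower bound $h\le h_\mu$ at the \emph{base} point $y$ (to replace $h_\mu(y)$ by the smaller quantity $h(y)$) and the upper bound $h_\mu\le h+\alpha_3\mu$ at the \emph{test} point $z$ (to replace $h_\mu(z)$ by the larger quantity $h(z)+\alpha_3\mu$); swapping these roles would produce a vacuous estimate. It is worth noting that convexity of $h$ itself is not actually needed for this conclusion—only the convexity and differentiability of $h_\mu$, which supply the gradient inequality, together with \eqref{eq-sm-appr-bd}.
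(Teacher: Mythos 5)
Your proof is correct and is essentially identical to the paper's own argument: both apply the gradient inequality for the convex function $h_\mu$ at the base point and then chain it with the two sides of the sandwich bound \eqref{eq-sm-appr-bd}, used in exactly the same directions. Your closing remark that convexity of $h$ itself is never used is accurate, but the argument is otherwise the same as the paper's.
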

	\begin{proof}
		For any $\bar y\in{\mathbb Y}$ and $\mu>0$, by the convexity of $h_{\mu}$, we
		have
		\begin{equation}\label{eq-subdiff}
			h_{\mu}(y) \ge h_{\mu}(\bar y) + \langle \nabla h_{\mu}(\bar
			y), y - \bar y\rangle\quad \forall\, y \in {\mathbb Y}.
		\end{equation}
		Using \eqref{eq-sm-appr-bd}, one has $h_{\mu}(y) \le h(y) +
		\alpha_{3} \mu$ and $h(\bar y) \le h_{\mu}(\bar y)$.
		Combining these with \eqref{eq-subdiff} gives
		\begin{equation*}
			h(y) \ge h(\bar y) + \langle \nabla h_{\mu}(\bar y), y - \bar
			y \rangle - \alpha_{3} \mu\quad \forall\, y \in {\mathbb Y},
		\end{equation*}
		which shows that $\nabla h_{\mu}(\bar y) \in \partial_{\alpha_{3}
			\mu}h(\bar y)$.
	\end{proof}
	
	We next introduce an important assumption regarding $\sigma_{\cal B}$ in \eqref{opt-supp-dc}.
	\begin{assumption}\label{ass-sm} The support function $\sigma_{\mathcal{B}}$ from \eqref{opt-supp-dc} admits an MSA $\{h_{\mu}\}_{\mu>0}$ with parameters $(\alpha_1, \alpha_2, \alpha_3)$.\footnote{More precisely, we mean that such $\{h_\mu\}_{\mu > 0}$ exists and we can compute their function values and gradients efficiently.}
		Moreover, there exists $\alpha_4 > 0$ such that
		\begin{equation}\label{eq-c4}
			h_{\mu_1}(y) \leq h_{\mu_0}(y) - \alpha_4 (\mu_0 - \mu_1)\quad \forall\,
			y\in{\mathbb Y},\, \mu_{0}>\mu_1>0.
		\end{equation}
	\end{assumption}
\begin{remark}[Remark on Assumption~\ref{ass-sm}]\label{rem25}
  \begin{enumerate}[{\rm (i)}]
    \item Table~\ref{tab-sm-ex} lists some examples of $\sigma_{\cal B}$ that admit MSAs. Indeed, the first two $h$'s there correspond to $\sigma_{\cal B}$ with ${\cal B}$ equals $\{u\in \R^m_+:\sum_{i=1}^m u_i = 1\}$ and $\{u\in {\cal S}_+^m:\; \tr(u)=1\}$, respectively. These correspond to ${\cal K}=\R^m_-$ and ${\cal S}^m_-$ respectively, according to Table~\ref{tab-const-ex}.
    \item It should be noted that the additional condition \eqref{eq-c4} is quite mild. Indeed, one can always modify an MSA that is {\em nondecreasing} (in the sense that $h_{\mu_1}\le h_{\mu_0}$ pointwise whenever $\mu_0\ge \mu_1>0$) to satisfy this condition.
	To see this, let $\{\bar h_{\mu}\}_{\mu>0}$ be an MSA of $\sigma_{\mathcal{B}}$ with parameters $(\alpha_1, \alpha_2, \alpha_3)$
	that satisfies additionally $\bar h_{\mu_1}\le \bar h_{\mu_0}$ for any $\mu_0\ge \mu_1>0$: one can check that this additional assumption is satisfied by each MSA in Table~\ref{tab-sm-ex}.
	Now, let $\alpha_4 > 0$ and define $\{h_{\mu}\}_{\mu>0}$ by
	\begin{equation*}
		h_{\mu}(y) {\changescolor\coloneqq}  \bar h_{\mu}(y) + \alpha_4 \mu \quad \forall\, y\in {\mathbb Y}, \mu>0.
	\end{equation*}
	Then one can see that $\{h_{\mu}\}_{\mu>0}$ is an MSA of $\sigma_{\mathcal{B}}$ with parameters $(\alpha_1, \alpha_2, \alpha_3+\alpha_4)$.
	Moreover, for all $y\in {\mathbb Y}$ and any $\mu_0>\mu_1>0$, we have
	\begin{equation*}
		h_{\mu_1}(y) = \bar h_{\mu_1}(y) + \alpha_4 \mu_1
		\le \bar h_{\mu_0}(y) + \alpha_4 \mu_1
		= h_{\mu_0}(y) - \alpha_4 (\mu_{0} - \mu_1),
	\end{equation*}
showing that $\{h_{\mu}\}_{\mu>0}$ also satisfies \eqref{eq-c4}.
  \end{enumerate}
\end{remark}
		
	We now consider smoothing the composite function $g_{\mathcal{B}}$ in \eqref{opt-supp-dc} by smoothing $\sigma_{\cal B}$. The next proposition discusses some basic properties of the resulting function.
	\begin{proposition}\label{prop-g-mu}
		Consider \eqref{opt-conic-dc} and \eqref{opt-supp-dc}, and suppose
		that Assumptions~\ref{ass-gen} and \ref{ass-sm} hold.
		For each $\mu>0$, define $g_{\mu} : {\mathbb X} \to \mathbb{R}$ by
		\begin{equation}\label{eq-g-mu-def}
			g_{\mu}(x) {\changescolor\coloneqq}  h_{\mu}(G(x)) \quad \forall\, x\in {\mathbb X}.
		\end{equation}
		Let $M_{\mathcal{B}}$ be defined in \eqref{eq-M-B-def}. Then the following statements hold:
		\begin{enumerate}[{\rm (i)}]
			\item We have
			\begin{align}
				& g_{\mathcal{B}}(x)\le g_{\mu}(x)\le g_{\mathcal{B}}(x) + \alpha_{3} \mu\quad \forall\, x\in {\mathbb X},\,
				\mu>0, \label{eq-gmu-appr}\\
				& g_{\mu_1}(x) \le g_{\mu_{0}}(x) - \alpha_{4} (\mu_{0} - \mu_1)\quad \forall\,
				x\in {\mathbb X},\, \mu_{0}>\mu_1>0. \label{eq-gmu-str-des}
			\end{align}
			
			\item
			It holds that\footnote{In \eqref{eq-M-G-def}, $\|DG(x)\|:= \sup\{\|DG(x) d\| : \|d\|=1\}$ is the operator norm.}
			\begin{align}
				& \|\nabla g_{\mu} (x) \| \le M_{G}M_{\mathcal{B}}< \infty \quad \forall\, x\in
				\Omega_{0},\,\mu>0, \label{eq-bd-nabla g_mu}\\
				& M_{G} \coloneqq  \sup\{ \| DG(x)\|: x \in \Omega_{0}\}<\infty. \label{eq-M-G-def}
			\end{align}
			\item It holds that
			\begin{equation}\label{eq-low-quad-bd}
				g_{\mu}(x) \ge g_{\mu}(\bar x) + \langle \nabla g_{\mu}(\bar x), x - \bar
				x\rangle - \frac{L_{G}M_{\mathcal{B}}}{2}\|x - \bar x\|^2
				\quad \forall\, x,\, \bar x\in {\mathbb X},\, \mu>0.
			\end{equation}
			
			\item If the mapping $G$ is $(-\mathcal{K})$-convex, then $g_{\mu}$ is convex for every $\mu>0$.
		\end{enumerate}
	\end{proposition}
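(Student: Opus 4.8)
The plan is to treat the four claims separately, obtaining (i)--(iii) from the chain rule together with the MSA estimates, and reserving the real work for (iv). For part (i), both inequalities are simply the corresponding properties of the MSA evaluated at $y = G(x)$: substituting $y = G(x)$ into \eqref{eq-sm-appr-bd} gives \eqref{eq-gmu-appr}, and substituting $y = G(x)$ into \eqref{eq-c4} gives \eqref{eq-gmu-str-des}. Nothing beyond this substitution is required.

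For part (ii), I would first establish $M_G < \infty$. Since $DG$ is $L_G$-Lipschitz, one has $\|DG(x)\| \le \|DG(x^0)\| + L_G\|x - x^0\|$ for every $x \in \Omega_0$, and the boundedness of $\Omega_0$ from Assumption~\ref{ass-gen}(iv) then yields a finite uniform bound, proving \eqref{eq-M-G-def}. Next, the chain rule gives $\nabla g_\mu(x) = DG(x)^*\nabla h_\mu(G(x))$, so $\|\nabla g_\mu(x)\| \le \|DG(x)\|\,\|\nabla h_\mu(G(x))\|$. By Proposition~\ref{prop-eps-subdiff} we have $\nabla h_\mu(G(x)) \in \partial_{\alpha_3\mu}\sigma_{\mathcal B}(G(x)) \subseteq \mathcal B$ (using \eqref{eq-sig-subdiff}), hence $\|\nabla h_\mu(G(x))\| \le M_{\mathcal B}$ by \eqref{eq-M-B-def}; combining the two bounds gives \eqref{eq-bd-nabla g_mu}.

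For part (iii), the convexity of $h_\mu$ yields $h_\mu(G(x)) \ge h_\mu(G(\bar x)) + \langle \nabla h_\mu(G(\bar x)),\, G(x) - G(\bar x)\rangle$. Using the chain rule and the adjoint relation to write $\langle \nabla g_\mu(\bar x),\, x - \bar x\rangle = \langle \nabla h_\mu(G(\bar x)),\, DG(\bar x)(x - \bar x)\rangle$, the quantity $g_\mu(x) - g_\mu(\bar x) - \langle \nabla g_\mu(\bar x),\, x - \bar x\rangle$ is bounded below by $\langle \nabla h_\mu(G(\bar x)),\, G(x) - G(\bar x) - DG(\bar x)(x - \bar x)\rangle$. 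The standard linearization estimate for the $L_G$-smooth map $G$ gives $\|G(x) - G(\bar x) - DG(\bar x)(x - \bar x)\| \le \frac{L_G}{2}\|x - \bar x\|^2$, and $\|\nabla h_\mu(G(\bar x))\| \le M_{\mathcal B}$ as in part (ii); the Cauchy--Schwarz inequality then produces the lower quadratic bound \eqref{eq-low-quad-bd}.

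Part (iv) is where the genuine idea enters, and I expect it to be the main obstacle. The classical composition rule guarantees convexity of $h_\mu \circ G$ once $h_\mu$ is convex, $G$ is $(-\mathcal K)$-convex, and $h_\mu$ is $(-\mathcal K)$-nondecreasing, i.e.\ $a - b \in \mathcal K \Rightarrow h_\mu(a) \le h_\mu(b)$; this last property is not part of the MSA definition and must be recovered. The key observation is that $\nabla h_\mu(y) \in \mathcal B \subseteq \mathcal K^\circ$ for every $y$, which again follows from Proposition~\ref{prop-eps-subdiff} and \eqref{eq-sig-subdiff}. Then, for $a - b = k \in \mathcal K$, the map $t \mapsto h_\mu(b + tk)$ has derivative $\langle \nabla h_\mu(b + tk),\, k\rangle \le 0$ (since $\nabla h_\mu(b+tk) \in \mathcal K^\circ$ and $k \in \mathcal K$), so it is nonincreasing on $[0,1]$ and $h_\mu(a) \le h_\mu(b)$ follows. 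With this monotonicity established, the $(-\mathcal K)$-convexity of $G$ gives $\lambda G(x) + (1-\lambda)G(z) - G(\lambda x + (1-\lambda)z) \in \mathcal K$ for $\lambda \in [0,1]$, so monotonicity yields $h_\mu(G(\lambda x + (1-\lambda)z)) \le h_\mu(\lambda G(x) + (1-\lambda)G(z))$, and a final appeal to the convexity of $h_\mu$ completes the chain showing that $g_\mu$ is convex.
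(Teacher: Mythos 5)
Your parts (i)--(iii) follow the paper's own proof essentially step for step: (i) is substitution of $y = G(x)$ into \eqref{eq-sm-appr-bd} and \eqref{eq-c4}; (ii) uses $\nabla g_\mu(x) = DG(x)^*\nabla h_\mu(G(x))$ together with $\nabla h_\mu(G(x)) \in \partial_{\alpha_3\mu}\sigma_{\mathcal B}(G(x)) \subseteq \mathcal B$ and the boundedness of $\Omega_0$; (iii) is the same linearization-error estimate that the paper derives inline via the integral representation of $G(x) - G(\bar x)$, combined with Cauchy--Schwarz, $\|\nabla h_\mu(G(\bar x))\|\le M_{\mathcal B}$, and the convexity of $h_\mu$.

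Part (iv) is correct but takes a genuinely different route. The paper scalarizes: for each $u \in \mathcal K^\circ$ the function $x \mapsto \langle u, G(x)\rangle$ is convex (this is \eqref{eq-k-conv}), so taking $u = \nabla h_\mu(G(\bar x)) \in \mathcal K^\circ$ and chaining with the subgradient inequality for $h_\mu$ yields $g_\mu(x) \ge g_\mu(\bar x) + \langle \nabla g_\mu(\bar x),\, x - \bar x\rangle$ for all $x,\bar x$, which characterizes convexity of the differentiable function $g_\mu$. You instead make explicit the monotonicity of $h_\mu$ with respect to $\mathcal K$ (proved by integrating $t \mapsto \langle \nabla h_\mu(b + t(a-b)),\, a-b\rangle \le 0$ along the segment) and then apply the classical composition rule directly to the secant (Jensen-type) inequality. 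Both arguments hinge on the same key fact, $\nabla h_\mu(\cdot) \in \mathcal B \subseteq \mathcal K^\circ$; yours isolates a reusable monotonicity property of the smoothing family, while the paper's avoids the line-segment argument by working entirely with tangent-plane inequalities.

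One sign slip should be fixed: under the paper's definition of $\mathcal Q$-convexity, the $(-\mathcal K)$-convexity of $G$ gives $\lambda G(x) + (1-\lambda)G(z) - G(\lambda x + (1-\lambda)z) \in -\mathcal K$, equivalently $G(\lambda x + (1-\lambda)z) - \lambda G(x) - (1-\lambda)G(z) \in \mathcal K$ --- the opposite of the membership you wrote. The inequality you then assert, $h_\mu(G(\lambda x + (1-\lambda)z)) \le h_\mu(\lambda G(x) + (1-\lambda)G(z))$, is exactly what follows from the correctly signed membership via your monotonicity rule, so the proof goes through once the sign is corrected; as literally written, your stated membership combined with your monotonicity would produce the reverse inequality.
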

	\begin{proof}
		Item (i) follows directly from \eqref{eq-sm-appr-bd}, \eqref{eq-c4} and the definition of $g_{\mu}$ in \eqref{eq-g-mu-def}.
		We now turn to item (ii).
		By Assumption~\ref{ass-gen}, $\Omega_{0}$ is bounded, which shows $M_G<\infty$.
		From Proposition~\ref{prop-eps-subdiff} and \eqref{eq-sig-subdiff}, we have, for any $x\in{\mathbb X}$ and $\mu>0$,
		$$
		\nabla h_{\mu}(G(x)) \in \partial_{\alpha_3 \mu}\sigma_{{\cal B}}(G(x)) \subseteq
		\mathcal{B},
		$$
		which, together with $\nabla g_{\mu}(x) = DG(x)^* \nabla h_{\mu}(G(x))$, shows that for any $x\in \Omega_{0}$,
		$$
		\|\nabla g_{\mu}(x)\| \le \|DG(x)\|\|\nabla h_{\mu}(G(x))\| \le
		M_{G}M_{\mathcal{B}}<\infty.
		$$
		
		We now prove item (iii). Let $\mu>0$ and $x,\, \bar x\in {\mathbb X}$.
		Then it holds that
		\begin{align*}
				&\langle \nabla h_{\mu}(G(\bar x)), G(x) - G(\bar x) \rangle
				\\=&
				\langle \nabla h_{\mu}(G(\bar x)),  DG(\bar x)(x\! -\! \bar x)\rangle
				\!+\! \int_{0}^{1} \langle \nabla h_{\mu}(G(\bar x)), \left(DG(\bar x\!+\! t(x\! -\! \bar
				x))\! -\! DG(\bar x)\right) (x\! -\! \bar x) \rangle d t
				\\\ge & \langle \nabla g_{\mu}(\bar x), x - \bar x \rangle
				- \|\nabla h_{\mu}(G(\bar x))\|\, \|x - \bar x\| \int_{0}^{1} \|DG(\bar x+t(x
				- \bar x)) - DG(\bar x)\| d t
				\\\ge & \langle \nabla g_{\mu}(\bar x), x - \bar x \rangle
				- \frac{L_{G}M_{\mathcal{B}}}{2}\|x - \bar x\|^2,
		\end{align*}
		where the last inequality holds because $\nabla h_{\mu}(G(\bar x))\in \mathcal{B}$ and $DG$ is Lipschitz continuous.
		Combining the above display with the convexity of $h_{\mu}$ and the definition of $g_{\mu}$ in \eqref{eq-g-mu-def}, we obtain
		\eqref{eq-low-quad-bd}.
		
		Finally, we prove item (iv). Suppose that $G$ is $(-\mathcal{K})$-convex.
		Then for every $u\in \mathcal{K}^{\circ}$, the function $x\mapsto \langle u, G(x)\rangle$ is convex, which implies that
		\begin{equation}\label{eq-k-conv}
			\langle u, G(x) - G(\bar x) \rangle \ge \langle u, DG(\bar x)(x -\bar x)\rangle \quad \forall x, \bar x \in \mathbb{X}.
		\end{equation}
		On the other hand, for every $\mu>0$, by the convexity of $h_{\mu}$, we have, for all $x, \bar x \in \mathbb{X}$,
		\begin{equation*}
			\begin{split}
				h_{\mu}(G(x)) & \ge h_{\mu}(G(\bar x)) + \langle \nabla h_{\mu}(G(\bar x)), G(x) - G(\bar x) \rangle \\
				& \ge h_{\mu}(G(\bar x)) + \langle DG(\bar x)^*\nabla h_{\mu}(G(\bar x)),  x - \bar x\rangle,
			\end{split}
		\end{equation*}
		where the second inequality holds because $\nabla h_{\mu}(G(\bar x))\in \mathcal{K}^{\circ}$ (so that we can set $u = \nabla h_\mu (G(x))$ in \eqref{eq-k-conv}). This proves (iv).
	\end{proof}

	\subsection{Optimality condition}
	We recall the following standard notion of Karush-Kuhn-Tucker (KKT) points for \eqref{opt-conic-dc}. It is well-known that under RCQ, all local minimizers of \eqref{opt-conic-dc} are KKT points; see, for example, the derivation in \cite[Eq.~(2.2)]{21YPL} and \cite[Theorem~6.14]{09RW}.
	\begin{definition}
		\label{def-KKT}
		We say that $x\in G^{-1}(\mathcal{K})$ is a KKT point of
		\eqref{opt-conic-dc}
		if there exists $v \in
		\mathcal{K}^{\circ}$ such that
		$0 \in \partial P_1(x) - \partial P_{2}(x) + \nabla f(x) + DG(x)^{*} v$
				and $\langle v, G(x)\rangle=0$.
	\end{definition}

	Motivated by Definition~\ref{def-KKT} and the fact that $\langle v, G(x)\rangle \le 0$ whenever $x\in G^{-1}({\cal K})$ and $v\in {\cal K}^\circ$, we introduce the next definition as a notion of approximate KKT points.
{\changescolor
	\begin{definition}
	\label{def-epsilon-KKT}
	Let $\epsilon_1\ge 0$, $\epsilon_2\ge 0$ and $\epsilon_3\ge 0$.
	We say that $x\in G^{-1}(\mathcal{K})$ is an $(\epsilon_1, \epsilon_2, \epsilon_3)$-KKT point of
	\eqref{opt-conic-dc}
	if there exist $z\in G^{-1}(\mathcal{K})$ and $v \in
	\mathcal{K}^{\circ}$ such that
		\begin{equation*}
			\begin{split}
				&\mathrm{dist}\left(0, \partial P_1(x) - \partial P_{2}(z) + \nabla f(x) +
				DG(x)^{*} v \right) \le  \epsilon_1,
				\\&   - \langle v, G(x)\rangle \le
				\epsilon_2 \quad \mbox{and}\quad \|x-z\|\le \epsilon_3.
			\end{split}
	\end{equation*}
\end{definition}
}

Notice that the subdifferential $\partial P_2$ is computed at a $z$ close to $x$ (rather than at $x$); we introduce this flexibility because $\partial P_2(\cdot)$ is not continuous in general.
{\changescolor When $\mathbb{X}=\mathbb{R}^{n}$, $\mathbb{Y}=\mathbb{R}^{m}$ and $\mathcal{K}=\mathbb{R}^{m}_{-}$ in \eqref{opt-conic-dc}, 
the notion of an $(\epsilon, \epsilon^2, \epsilon)$-KKT point in the sense of Definition~\ref{def-epsilon-KKT} implies the notion of an $\epsilon$-KKT point as defined in \cite[Definition~6]{25LPB}.
Moreover,  when $\mathbb{X}=\mathbb{R}^{n}$, $\mathbb{Y}=\mathbb{R}^{m}$, $\mathcal{K}=\mathbb{R}^{m}_{-}$ and 
$P_2 = 0$ in \eqref{opt-conic-dc}, a $(\sqrt{\epsilon}, \epsilon, 0)$-KKT point is an $\epsilon$-KKT point in the sense of \cite[Definition~3]{25BDL}. 
}	

	We end this section with the following lemma, which will be useful in our algorithmic analysis later.
	\begin{lemma}\label{lm-mfcq-region}
		Consider \eqref{opt-conic-dc} and \eqref{opt-supp-dc}, and suppose that Assumption~\ref{ass-gen} holds.
		Then there exists a constant $\eta>0$ such that, for every $(x, u) \in
		\Omega_{0} \times \mathcal{B}$, the following conditions cannot be satisfied at the same time:
		\begin{equation*}
			\|DG(x)^{*} u\|^2 \le \eta\quad \mbox{and}\quad -\langle u, G(x)\rangle \le
			\eta.
		\end{equation*}
	\end{lemma}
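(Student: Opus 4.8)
The plan is to argue by contradiction, combining the compactness of $\Omega_0$ and $\mathcal{B}$ with the primal (interior) form of Robinson's constraint qualification in Assumption~\ref{ass-gen}(v). First I would record the two compactness facts I will need: $\Omega_0$ is compact, being bounded by Assumption~\ref{ass-gen}(iv) and closed by continuity of $\psi$ and $G$ together with closedness of $\mathcal{K}$; and $\mathcal{B}$ is a compact base of $\mathcal{K}^\circ$, so in particular $\mathcal{B}\subseteq\mathcal{K}^\circ$ and $0\notin\mathcal{B}$ (since $0\notin\mathrm{cl}(\mathcal{B})=\mathcal{B}$). It is the last fact, $0\notin\mathcal{B}$, that I will eventually contradict.

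Assuming the conclusion fails, no $\eta>0$ works, so choosing $\eta=1/k$ would produce a sequence $(x_k,u_k)\in\Omega_0\times\mathcal{B}$ satisfying $\|DG(x_k)^*u_k\|^2\le 1/k$ and $-\langle u_k,G(x_k)\rangle\le 1/k$. By compactness I would pass to a subsequence along which $x_k\to x^*\in\Omega_0$ and $u_k\to u^*\in\mathcal{B}$. Since $G$ is $L_G$-smooth, $DG$ is Lipschitz continuous and hence bounded on the compact set $\Omega_0$; a routine estimate then gives $DG(x_k)^*u_k\to DG(x^*)^*u^*$, so that $DG(x^*)^*u^*=0$. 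Passing to the limit in the second inequality yields $-\langle u^*,G(x^*)\rangle\le 0$, while $u^*\in\mathcal{K}^\circ$ and $G(x^*)\in\mathcal{K}$ (as $x^*\in\Omega_0$) give $\langle u^*,G(x^*)\rangle\le 0$; together these force $\langle u^*,G(x^*)\rangle=0$.

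The crux is then to invoke RCQ at $x^*$ to deduce $u^*=0$. By Assumption~\ref{ass-gen}(v) there exists $\delta>0$ with $B(0,\delta)\subseteq G(x^*)+DG(x^*)\mathbb{X}-\mathcal{K}$. For any $y\in B(0,\delta)$ I would write $y=G(x^*)+DG(x^*)d-k$ with $d\in\mathbb{X}$ and $k\in\mathcal{K}$, and compute
\[
\langle u^*,y\rangle=\langle u^*,G(x^*)\rangle+\langle DG(x^*)^*u^*,d\rangle-\langle u^*,k\rangle=-\langle u^*,k\rangle\ge 0,
\]
where the last equality uses $\langle u^*,G(x^*)\rangle=0$ and $DG(x^*)^*u^*=0$, and the inequality uses $u^*\in\mathcal{K}^\circ$ and $k\in\mathcal{K}$. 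Since $B(0,\delta)$ is symmetric about the origin, applying this to both $y$ and $-y$ forces $\langle u^*,y\rangle=0$ for all $y\in B(0,\delta)$, hence $u^*=0$, contradicting $0\notin\mathcal{B}$.

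I expect the main obstacle to lie precisely in this last step: correctly extracting a usable consequence from the interior form of RCQ. The compactness and limiting arguments are routine, whereas the decisive observation is that pairing the RCQ inclusion with the limit multiplier $u^*$ simultaneously annihilates the constant term (via $\langle u^*,G(x^*)\rangle=0$) and the linear term (via $DG(x^*)^*u^*=0$), leaving only the sign condition inherited from $\mathcal{K}^\circ$; the symmetry of the ball then upgrades this to $u^*=0$. An alternative would be to pass through the standard dual characterization of RCQ in terms of the normal cone $N_{\mathcal{K}}(G(x^*))$, but the direct computation above avoids citing that equivalence.
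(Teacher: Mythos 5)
Your proof is correct, and its overall skeleton matches the paper's: argue by contradiction, use compactness of $\Omega_{0}$ and $\mathcal{B}$ to extract a limit pair $(x^{*}, u^{*})$ satisfying $G(x^{*})\in\mathcal{K}$, $DG(x^{*})^{*}u^{*}=0$ and $-\langle u^{*}, G(x^{*})\rangle\le 0$, and then use RCQ to force $u^{*}=0$, contradicting $0\notin\mathcal{B}$. Where you genuinely diverge is the decisive last step. The paper invokes \cite[Lemma~2.99]{13BS} (together with $\mathrm{int}(\mathcal{K})\neq\emptyset$) to convert RCQ at $\bar x$ into the existence of a direction $\bar d$ with $\bar y \coloneqq G(\bar x)+DG(\bar x)\bar d\in\mathrm{int}(\mathcal{K})$, then chooses $\epsilon>0$ small enough that $\bar y+\epsilon\bar u\in\mathcal{K}$ and pairs with $\bar u$ to get
\begin{equation*}
0\ \ge\ \langle \bar u,\ \bar y+\epsilon\bar u\rangle\ =\ \langle \bar u, G(\bar x)\rangle+\langle DG(\bar x)^{*}\bar u, \bar d\rangle+\epsilon\|\bar u\|^{2}\ \ge\ \epsilon\|\bar u\|^{2},
\end{equation*}
hence $\bar u=0$. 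You instead work directly from the interiority statement in Assumption~\ref{ass-gen}(v): a ball $B(0,\delta)\subseteq G(x^{*})+DG(x^{*})\mathbb{X}-\mathcal{K}$, each of whose points, when paired with $u^{*}$, has its constant term killed by $\langle u^{*},G(x^{*})\rangle=0$ and its linear term killed by $DG(x^{*})^{*}u^{*}=0$, leaving $\langle u^{*},y\rangle\ge 0$ on a symmetric neighborhood of the origin, whence $u^{*}=0$. Both arguments are sound; yours buys self-containedness (no external lemma is cited, and the nonempty interior of $\mathcal{K}$ plays no role in this step, so the argument works verbatim for any closed convex cone once RCQ is phrased as in Assumption~\ref{ass-gen}(v)), while the paper's is marginally shorter and connects the lemma to standard constraint-qualification machinery via the primal (Slater-direction) form of RCQ.
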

	\begin{proof}
		Suppose to the contrary that for each $k\in \mathbb{N}_0$ there exists a pair
		$(x^{k}, u^{k})\in \Omega_{0}\times\mathcal{B}$
		such that $\|DG(x^{k})^{*} u^{k}\|^2 \le \frac{1}{k+1}$ and $-\langle u^{k},
		G(x^{k})\rangle \le \frac{1}{k+1}$.
		Since $\mathcal{B}$ is compact, $\{u^{k}\}$ is
		bounded.
		By Assumption~\ref{ass-gen}, $\Omega_{0}$ is bounded, so $\{x^{k}\}$ is bounded too.
		Then there exist convergent subsequences such that
		$x^{k_t} \rightarrow \bar x$ and $u^{k_t} \rightarrow \bar u$ for some $\bar x\in\Omega_{0}$ and $\bar u \in \mathcal{B}$.
		Consequently, it holds that
		\begin{equation}\label{eq-cq}
			G(\bar x) \in \mathcal{K},\, DG(\bar x)^{*} \bar u =
			0,\, -\langle \bar u, G(\bar x)\rangle \le 0.
		\end{equation}		
		On the other hand, since the cone $\mathcal{K}$ has a nonempty interior, by RCQ
		and \cite[Lemma~2.99]{13BS}, there exists $\bar d\in {\mathbb X}$ such
		that
		$
		\bar y \coloneqq  G(\bar x) + DG(\bar x) \bar d \in \mathrm{int}(\mathcal{K})
		$.
		Let $\epsilon>0$ be a sufficiently small constant with $\bar y + \epsilon \bar
		u\in \mathcal{K}$.
		Since $\bar u \in \mathcal{K}^{\circ}$, we have
		\begin{equation*}
			0\ge \langle \bar u, \bar y + \epsilon \bar{u}\rangle
			=  \langle \bar u, G(\bar x)\rangle + \langle DG(\bar x)^{*} \bar u,
			\bar d\rangle  + \epsilon \|\bar{u}\|^2 \ge \epsilon \|\bar{u}\|^2,
		\end{equation*}
		where the last inequality follows from \eqref{eq-cq}.
		Consequently, we get $0 = \bar u\in \mathcal{B}$, which contradicts the fact
		that $\mathcal{B}$ is a base of $\mathcal{K}^{\circ}$.
	\end{proof}
	
	\section{A smoothing moving balls approximation method}
	
	\subsection{Algorithm}\label{sec:alg}
	We present our algorithm, which we call the smoothing moving balls
	approximation method ({\em s}MBA), for solving
	\eqref{opt-supp-dc} (or, equivalently, \eqref{opt-conic-dc}) under Assumptions~\ref{ass-gen} and \ref{ass-sm} in Algorithm~\ref{alg-SMBA} below.
	
	\begin{algorithm}[h]
		\caption{{\em s}MBA: a smoothing moving balls approximation
			method for \eqref{opt-conic-dc} under Assumptions~\ref{ass-gen} and \ref{ass-sm}.}\label{alg-SMBA}
		\begin{algorithmic}[0]
			\setlength{\columnwidth}{\linewidth}
			\STATE {\bf Require.} $\tau_1>0$, $\tau_2>0$, $\hat{L}\ge\check{L}>0$, the
			$x^{0}\in G^{-1}(\mathrm{int}(\mathcal{K}))$ given in Assumption~\ref{ass-gen},\\ \ \ \qquad\qquad and the MSA $\{h_\mu\}_{\mu>0}$ from Assumption~\ref{ass-sm}.
			\STATE {\bf Step 1.} Find $\mu_0>0$ with $g_{\mu_{0}}(x^{0})< 0$.\footnotemark\
			Set the iteration counter $k = 0$.
			\STATE {\bf Step 2.} Choose $\xi^{k}\in \partial P_2 (x^{k})$ and $L^{k, 0}_{f}, L^{k,0}_{g}\in [\check L, \hat L]$.
			Let $i = 0$ and $j = 0$.
			\STATE {\bf Step 3.} Set $L^{k, i}_{f} = 2^{i}L^{k,0}_{f}$, $L^{k,
				j}_{g} = 2^{j}L^{k,0}_{g}$. Solve the following subproblem for the
				\\ \qquad\qquad unique solution $x^{k,i,j}$ and a Lagrange multiplier $\lambda_{k,i,j}\ge 0$:
			\begin{equation}\label{opt-sub}
				\begin{array}{rl}
					\min\limits_{x\in{\mathbb X}} &
					\psi_{k,i}(x) \coloneqq P_1(x) + \langle \nabla f(x^{k}) - \xi^{k},\, x-x^{k} \rangle
					+ \frac{L^{k,i}_{f}}{2}\|x-x^{k}\|^2
					\\ \mbox{s.t.}&
 g_{k,j}(x) \coloneqq  g_{\mu_{k}}(x^{k}) + \langle \nabla
					g_{\mu_{k}}(x^{k}),\, x - x^{k} \rangle  + \frac{L^{k,j}_{g}}{2\mu_{k}}\|x-x^{k}\|^2 \le 0.
				\end{array}
			\end{equation}
			\STATE {\bf Step 4.} If $g_{\mu_{k}}({x}^{k,i,j}) \le 0$ and
			\begin{equation}\label{eq-des-suf}
				\psi(x^{k, i, j}) \le \psi(x^{k}) - \frac{\tau_1\mu_{k}  + \tau_2 \lambda_{k, i, j}
				}{2\mu_{k}}\|x^{k,i,j} - x^{k}\|^2
			\end{equation}
			
			\qquad\qquad hold, let $i_k = i$, $j_k = j$ and go to {\bf Step 5}; else if
			$g_{\mu_{k}}({x}^{k,i,j}) > 0$, let \\ \qquad\qquad $j\leftarrow j+1$ and go to {\bf Step 3};
			else let $i \leftarrow  i + 1$ and $j\leftarrow j+1$, and\\ \qquad\qquad go to {\bf Step 3}.
			\STATE {\bf Step 5.} Let $x^{k+1} = x^{k,i_k,j_k}$, $\lambda_{k+1} =
			\lambda_{k,i_k,j_k}$, $L^{k}_{f} = L^{k,i_k}_{f}$ and $L^{k}_{g} =
			L^{k,j_k}_{g}$. Choose\\ \qquad\qquad $\mu_{k+1}\in (0,\mu_k)$. Update $k\leftarrow k+1$ and go to {\bf Step 2}.
		\end{algorithmic}
	\end{algorithm}
\footnotetext{Recall that $g_\mu$ is defined in \eqref{eq-g-mu-def}.}

In essence, the algorithm is an adaptation of the moving balls approximation method from \cite{10AST} (see also \cite{16ST} for a specialization to handle a single {\em smooth} inequality constraint, and \cite{21YPL} for the consideration of a difference-of-convex objective and the incorporation of linesearch techniques) to solving \eqref{opt-supp-dc}. Since $g_{\cal B}$ in \eqref{opt-supp-dc} is nonsmooth in general, we replace $g_{\cal B}$ by the smooth function $g_{\mu_k}$ from \eqref{eq-g-mu-def} in the $k$th iteration and construct the corresponding moving balls approximation subproblem as in \eqref{opt-sub}. Compared with the recent works \cite{25BDL,25LPB} which studied \eqref{alg-SMBA} with ${\cal K} = \R^m_-$ and whose algorithms involve subproblems with possibly multiple inequality constraints (and require an iterative solver), our resulting subproblem \eqref{opt-sub} always involves a {\em single} inequality constraint, and can be solved efficiently via some simple root-finding procedures if the proximal mapping of $\gamma P_1$ can be computed efficiently for all $\gamma > 0$; see, e.g., \cite[Appendix~A]{21YPL} for the case when $P_1$ is the $\ell_1$ norm {\changescolor where such subproblems can be solved efficiently and {\em exactly}}.

Our algorithm starts with the $x^{0}\in G^{-1}(\mathrm{int}(\mathcal{K}))$ given in Assumption~\ref{ass-gen}. It proceeds by identifying a $\mu_0$ satisfying $g_{\mu_0}(x^0) < 0$, whose existence we argue briefly as follows. Indeed, we have $\sigma_{\mathcal{B}}(G(x^{0}))<0$ in view of Lemma~\ref{lm-base}. Hence, there exists a sufficiently small $\mu_{0}>0$ such that
		\begin{equation}\label{hahahaha}
		g_{\mu_{0}}(x^{0}) \le \sigma_{\mathcal{B}}(G(x^{0})) +
		\alpha_{3} \mu_{0}<0,
		\end{equation}
		where we use \eqref{eq-gmu-appr} to get the
		first inequality.
		This shows the existence of $\mu_{0}$ in {\bf Step 1} of Algorithm~\ref{alg-SMBA}.

The crux of the algorithm involves two ingredients: the subproblem \eqref{opt-sub} in {\bf Step 3} and the linesearch procedure in {\bf Step 4} for feasibility and the descent condition \eqref{eq-des-suf}. We will establish the well-definedness of our algorithm by arguing the well-definedness of the subproblem \eqref{opt-sub} and the finite termination of the linesearch procedure in Lemma~\ref{lm-well-defined} below. We start with an auxiliary lemma on the Lipschitz continuity modulus of the gradient of $g_\mu$ in \eqref{eq-g-mu-def}.

	\begin{lemma}[Lipschitz continuity of $\nabla g_\mu$]\label{lm-g-mu-grad-Lip}
		Consider \eqref{opt-conic-dc} and \eqref{opt-supp-dc}, and suppose that Assumptions~\ref{ass-gen} and \ref{ass-sm} hold.
		Let $g_{\mathcal{B}}$, $M_{\mathcal{B}}$, $g_{\mu}$ and $M_{G}$ be defined in \eqref{opt-supp-dc}, \eqref{eq-M-B-def}, \eqref{eq-g-mu-def} and \eqref{eq-M-G-def}, respectively.
		Let $\mu_{0}$ and $\check{L}$ be given in Algorithm~\ref{alg-SMBA}.\footnote{According to the discussion leading to \eqref{hahahaha}, we know that $\mu_0$ is well-defined.} Define
		\begin{align}
			& M_{R} {\changescolor\coloneqq} \sqrt{\mu_{0}^2\check{L}^{-2}M_G^2 M_{\mathcal{B}}^2 -
				2 \mu_{0}\check{L}^{-1} g^{*}_{\mathcal B}}+\mu_{0} \check{L}^{-1} M_{G} M_{\mathcal{B}}, \label{eq-def-MR}
				\\ &\hat{\Omega}_{0} {\changescolor\coloneqq}  \Omega_{0} + B\left(0, M_{R}\right), \label{eq-def-hat Omega}
		\end{align}
		where $g^{*}_{\mathcal B} {\changescolor\, \coloneqq} \inf_{x\in \Omega_{0}}g_{\mathcal{B}}(x)$.
		Then, for every $\mu>0$, $g_{\mu}$ has an $L_{\mu}$-Lipschitz continuous gradient on $\mathrm{conv}(\hat \Omega_{0})$, where
		\begin{align}
			&L_{\mu} {\changescolor\coloneqq} M_{\mathcal{B}}L_G + \alpha_{1}\hat M_{G}^2 + \alpha_{2} \hat
			M_{G}^2/\mu \label{eq-L-mu}
		\end{align}
		with $\hat M_{G} \coloneqq  \sup \{\| DG(x)\| : x \in \mathrm{conv}(\hat \Omega_{0})\} < \infty.$
	\end{lemma}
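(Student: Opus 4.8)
The plan is to compute $\nabla g_\mu$ via the chain rule and then establish the Lipschitz estimate through a standard add-and-subtract argument, bounding each resulting piece using ingredients already available: the $L_G$-smoothness of $G$, the fact that $\nabla h_\mu(G(\cdot))$ lands in $\mathcal{B}$ (and is therefore bounded by $M_{\mathcal B}$), and the $(\alpha_1 + \alpha_2/\mu)$-Lipschitz continuity of $\nabla h_\mu$ supplied by the MSA definition. Before any estimate, I would first record that $\mathrm{conv}(\hat\Omega_0)$ is bounded: $\Omega_0$ is bounded by Assumption~\ref{ass-gen}(iv), so $\hat\Omega_0 = \Omega_0 + B(0, M_R)$ is bounded, and taking the convex hull preserves boundedness; since $G$ is $C^1$, $DG$ is continuous and hence attains a finite supremum over the compact set $\mathrm{cl}(\mathrm{conv}(\hat\Omega_0))$, which is exactly the statement $\hat M_G < \infty$.

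Next, using the identity $\nabla g_\mu(x) = DG(x)^* \nabla h_\mu(G(x))$ (derived in the proof of Proposition~\ref{prop-g-mu}), I would fix $x, \bar x \in \mathrm{conv}(\hat\Omega_0)$ and split
\begin{equation*}
\nabla g_\mu(x) - \nabla g_\mu(\bar x) = \left(DG(x)^* - DG(\bar x)^*\right)\nabla h_\mu(G(x)) + DG(\bar x)^*\left(\nabla h_\mu(G(x)) - \nabla h_\mu(G(\bar x))\right).
\end{equation*}
For the first term I would combine the adjoint identity $\|DG(x)^* - DG(\bar x)^*\| = \|DG(x) - DG(\bar x)\| \le L_G\|x - \bar x\|$ with the bound $\|\nabla h_\mu(G(x))\| \le M_{\mathcal B}$, which holds because Proposition~\ref{prop-eps-subdiff} together with \eqref{eq-sig-subdiff} places $\nabla h_\mu(G(x)) \in \partial_{\alpha_3\mu}\sigma_{\mathcal B}(G(x)) \subseteq \mathcal{B}$ while $M_{\mathcal B} = \sup_{u\in\mathcal{B}}\|u\|$ by \eqref{eq-M-B-def}; this controls the first term by $M_{\mathcal B}L_G\|x - \bar x\|$. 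For the second term I would use $\|DG(\bar x)^*\| = \|DG(\bar x)\| \le \hat M_G$ and the MSA gradient-Lipschitz estimate $\|\nabla h_\mu(G(x)) - \nabla h_\mu(G(\bar x))\| \le (\alpha_1 + \alpha_2/\mu)\|G(x) - G(\bar x)\|$.

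The one step that genuinely dictates the structure of the statement, and the reason we work over $\mathrm{conv}(\hat\Omega_0)$ rather than $\hat\Omega_0$ itself, is the bound on $\|G(x) - G(\bar x)\|$: since $\mathrm{conv}(\hat\Omega_0)$ is convex, the segment $[\bar x, x]$ stays inside it, so the integral representation $G(x) - G(\bar x) = \int_0^1 DG(\bar x + t(x - \bar x))(x - \bar x)\,dt$ is legitimate and yields $\|G(x) - G(\bar x)\| \le \hat M_G\|x - \bar x\|$. Passing to the convex hull thus plays a double role, simultaneously keeping $\hat M_G$ finite and licensing this mean-value-type bound. The second term is then bounded by $(\alpha_1 + \alpha_2/\mu)\hat M_G^2\|x - \bar x\|$.

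Summing the two bounds via the triangle inequality would give
\begin{equation*}
\|\nabla g_\mu(x) - \nabla g_\mu(\bar x)\| \le \left(M_{\mathcal B}L_G + \alpha_1\hat M_G^2 + \alpha_2\hat M_G^2/\mu\right)\|x - \bar x\| = L_\mu\|x - \bar x\|,
\end{equation*}
which is precisely \eqref{eq-L-mu}. I do not anticipate any serious obstacle here: the computation is essentially routine once the chain-rule form of $\nabla g_\mu$ is in hand, and the only point requiring care is the deliberate choice of the convex domain $\mathrm{conv}(\hat\Omega_0)$, which underpins both the finiteness of $\hat M_G$ and the segment-based estimate of $\|G(x) - G(\bar x)\|$.
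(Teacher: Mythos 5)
Your proposal is correct and follows essentially the same route as the paper's proof: the same splitting of $\nabla g_\mu(x)-\nabla g_\mu(\bar x)$ into a $(DG(x)^*-DG(\bar x)^*)\nabla h_\mu(G(x))$ piece and a $DG(\bar x)^*(\nabla h_\mu(G(x))-\nabla h_\mu(G(\bar x)))$ piece, bounded respectively by $L_G M_{\mathcal B}$ and $(\alpha_1+\alpha_2/\mu)\hat M_G^2$ using $\nabla h_\mu(G(\cdot))\in\mathcal B$ and the mean-value estimate $\|G(x)-G(\bar x)\|\le \hat M_G\|x-\bar x\|$ on the convex set $\mathrm{conv}(\hat\Omega_0)$. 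In fact you spell out two points the paper leaves implicit (the integral justification of the mean-value bound and why $\hat M_G<\infty$); the only detail you gloss over is the paper's opening observation that $g^*_{\mathcal B}\in(-\infty,0)$, which is what makes $M_R$ finite and hence $\hat\Omega_0$ bounded in the first place.
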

	\begin{proof}
		From Assumption~\ref{ass-gen} and {\changescolor Lemma~\ref{lm-base}, we} see that $g_{\mathcal{B}}^* \in (-\infty, 0)$.
		Hence, $M_{R} \in (0, \infty)$ and $\hat \Omega_{0}$ is compact.
		For all $\mu >0$ and every $x,z\in \mathrm{conv}(\hat \Omega_{0})$, we
		have
		\begin{align*}
				& \|\nabla g_{\mu}(z) - \nabla g_{\mu}(x)\| = \| DG(z)^{*}\nabla
				h_{\mu}(G(z)) - DG(x)^{*}\nabla h_{\mu}(G(x))\|
				\\ \le & \| \left(DG(z)- DG(x)\right)^{*}\nabla h_{\mu}(G(z))\|
				+ \| DG(x)^{*}\left(\nabla h_{\mu}(G(z)) - \nabla
				h_{\mu}(G(x))\right)\|
				\\ \le & \|\nabla h_{\mu}(G(z))\|\, \| DG(z)- DG(x)\|
				+ \| DG(x)\|\, \|\nabla h_{\mu}(G(z)) - \nabla h_{\mu}(G(x))\|
				\\ \le & L_G M_{\mathcal{B}} \| z - x \| + (\alpha_{1} + \alpha_{2}/\mu) \hat
				M_{G} \| G(z) - G(x)\|
				\\ \le & \left(M_{\mathcal{B}}L_G + \alpha_{1}\hat M_{G}^2  + \alpha_{2} \hat
				M_{G}^2/\mu\right) \| z - x \|,
		\end{align*}
		where the third inequality follows from the Lipschitz continuity of $DG$ and $\nabla
		h_{\mu}$.
	\end{proof}

	\begin{lemma}[Well-definedness of Algorithm~\ref{alg-SMBA}]\label{lm-well-defined}
		Consider \eqref{opt-conic-dc} and \eqref{opt-supp-dc}, and suppose that Assumptions~\ref{ass-gen} and \ref{ass-sm} hold.
		Then Algorithm~\ref{alg-SMBA} is well-defined in the sense that
\begin{itemize}
  \item the $\mu_0$ in {\bf Step 1} can be found;
  \item the subproblem \eqref{opt-sub} is well-defined;
  \item there exists $\iota$ (independent of $k$) such that {\bf Step 4} is invoked at most $\iota$ times every iteration.
\end{itemize}
		More precisely, with $g_{\mu}$ and $L_{\mu}$ defined in \eqref{eq-g-mu-def} and \eqref{eq-L-mu} respectively, the following statements hold for each iteration $k$.
		\begin{enumerate}[{\rm (i)}]
			\item It holds that $x^{k}\in \Omega_{0}$ and is strictly feasible for \eqref{opt-conic-dc} with $g_{\mu_{k}}(x^{k})<0$.
			\item For each $i,j\in \mathbb{N}_{0}$, \eqref{opt-sub} has a unique
			solution $x^{k,i,j}$ and there exists a
			$\lambda_{k, i, j} \in \mathbb{R}_{+}$ such that
			\begin{align}
				& 0 \in \partial P_1(x^{k,i,j})\!  + \! \nabla f(x^{k})\! -\! \xi^{k}
				\!+\! \lambda_{k,i,j} \nabla g_{\mu_{k}}(x^{k}) \!+\! \frac{\changescolor\widetilde L_{k,i,j}}{\mu_{k}} (x^{k,i,j} \!-\! x^{k}), \label{eq-res}\\
				& \lambda_{k,i,j} g_{k,j}(x^{k,i,j}) = 0 \quad \mbox{and}\quad
				g_{k,j}(x^{k,i,j})\le 0, \label{eq-slack}
			\end{align}
			where ${\changescolor\widetilde L_{k,i,j} \coloneqq} \mu_{k} L^{k,i}_{f} + \lambda_{k,i,j} L^{k,j}_{g}$.
			Moreover, it holds that, for all $x\in {\mathbb X}$,
			\begin{align}
				P_1(x^{k,i,j}) & \le P_1(x) +\langle \nabla f(x^{k}) - \xi^{k},\, x-x^{k,i,j} \rangle + \lambda_{k,i,j}g_{k,j}(x)
				\nonumber\\ & \quad + \frac{L^{k,i}_{f}}{2}\|x-x^{k}\|^2
				-  \frac{\lambda_{k,i,j} L_{g}^{k,j}}{2\mu_{k}} \|x - x^{k,i,j}\|^2 \label{eq-lambda-bd-hat x}
\end{align}
and
\begin{align}
				\psi(x^{k,i,j})
				& \le  f(x^{k}) + P_1(x) - P_{2}(x^{k}) + \langle \nabla f(x^{k}) - \xi^{k}, x - x^{k}\rangle
				\nonumber\\ &\quad + \lambda_{k,i,j} g_{k,j}(x)
				+ \frac{L^{k,i}_{f}}{2}\|x-x^{k}\|^2
				\nonumber\\ &\quad + \frac{L_{f} - L^{k,i}_{f}}{2}\|x^{k,i,j} - x^{k}\|^2
				- \frac{\changescolor\widetilde L_{k,i,j}}{2\mu_{k}}
				\|x - x^{k,i,j}\|^2. \label{eq-ball-prox}
			\end{align}
			
			\item The inner loop stops for some
			integers $i_{k}\le j_k \le \bar i + \bar j$, where
			\begin{align}
				&\bar i {\changescolor\, \coloneqq}  \max\{0,\, \lceil \log_{2}\left((L_f+\tau_1)/\check{L}\right) \rceil\},
				\label{eq-def-bar s}\\
				&\bar j {\changescolor\, \coloneqq}  \max\{0,\, \lceil \log_{2}\left(\tau_2/\check{L}\right)  \rceil, \lceil \log_{2}\left(\mu_{0}L_{\mu_0}/\check{L}\right)
				\rceil\}. \label{eq-def-bar j}
			\end{align}
			Moreover, it holds that
			\begin{align}
\check{L}\le L^k_f \le M_{L}\ \ {and}\ \ \check{L}\le L^k_g \le M_{L}, \label{eq-bd-MLf}
\end{align}
where
\begin{align}
M_{L} {\changescolor\, \coloneqq} \hat{L}\max\{2, 2(L_{f} + \tau_1)/\check{L}\}\max\{2, 2\tau_2/\check{L}, 2\mu_{0}L_{\mu_{0}}/\check{L}\}.\label{eq-bd-MLg}
			\end{align}
			\item It holds that
			\begin{equation}\label{eq-f-des}
				\psi(x^{k+1}) \le \psi(x^{k}) - \frac{\tau_1 \mu_{k} + \tau_2\lambda_{k+1}}{2\mu_{k}} \|x^{k+1} - x^{k}\|^2\ \ {and}\ \
g_{\mu_k}(x^{k+1})\le 0.
			\end{equation}
		\end{enumerate}
	\end{lemma}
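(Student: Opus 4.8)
The plan is to prove everything by induction on the iteration counter $k$, using item~(i) as the inductive invariant that is consumed by the analysis of (ii)--(iv) within an iteration, while item~(iv) of iteration $k$ re-establishes item~(i) for iteration $k+1$. For the base case of (i), $x^0\in G^{-1}(\mathrm{int}(\mathcal{K}))\subseteq\Omega_0$ and $g_{\mu_0}(x^0)<0$ hold by the construction in \textbf{Step 1}, whose validity is precisely \eqref{hahahaha}. For the inductive step of (i), I would combine the feasibility guarantee $g_{\mu_k}(x^{k+1})\le 0$ from \eqref{eq-f-des} with the monotonicity \eqref{eq-gmu-str-des}: since $\mu_{k+1}<\mu_k$ by \textbf{Step 5}, one gets $g_{\mu_{k+1}}(x^{k+1})\le g_{\mu_k}(x^{k+1})-\alpha_4(\mu_k-\mu_{k+1})<0$, which is exactly where the descent condition \eqref{eq-c4} is indispensable. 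Then $G(x^{k+1})\in\mathrm{int}(\mathcal{K})$ follows from \eqref{eq-gmu-appr} and Lemma~\ref{lm-base}, and $\psi(x^{k+1})\le\psi(x^k)\le\psi(x^0)$ follows from \eqref{eq-f-des}, so $x^{k+1}\in\Omega_0$.

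For item~(ii), fix $k$ with $x^k\in\Omega_0$ and $g_{\mu_k}(x^k)<0$. The subproblem \eqref{opt-sub} has an $L^{k,i}_f$-strongly convex objective and a convex quadratic constraint $g_{k,j}$, and $x^k$ is strictly feasible since $g_{k,j}(x^k)=g_{\mu_k}(x^k)<0$, so Slater's condition holds; strong convexity then yields the unique minimizer $x^{k,i,j}$, and the KKT system produces $\lambda_{k,i,j}\ge 0$ obeying \eqref{eq-slack} together with a stationarity relation that, after merging the gradient terms $L^{k,i}_f(x^{k,i,j}-x^k)$ and $\lambda_{k,i,j}L^{k,j}_g\mu_k^{-1}(x^{k,i,j}-x^k)$ into $\widetilde L_{k,i,j}\mu_k^{-1}(x^{k,i,j}-x^k)$, is exactly \eqref{eq-res}. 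Since the Lagrangian $\psi_{k,i}+\lambda_{k,i,j}g_{k,j}$ is $(\widetilde L_{k,i,j}/\mu_k)$-strongly convex and minimized at $x^{k,i,j}$, writing its strong-convexity inequality at a generic $x$, using complementary slackness to delete $\lambda_{k,i,j}g_{k,j}(x^{k,i,j})$, and discarding two nonnegative quadratic remainders gives \eqref{eq-lambda-bd-hat x}. Finally, combining the tight form of that bound with the descent lemma for the $L_f$-smooth $f$ and the subgradient inequality $-P_2(x^{k,i,j})\le -P_2(x^k)-\langle\xi^k,x^{k,i,j}-x^k\rangle$ (valid as $\xi^k\in\partial P_2(x^k)$), and simplifying the resulting linear terms to $\langle\nabla f(x^k)-\xi^k,x-x^k\rangle$, yields \eqref{eq-ball-prox}.

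The crux, and the part I expect to be the main obstacle, is item~(iii). The first ingredient is an a priori radius bound: every subproblem solution satisfies $g_{k,j}(x^{k,i,j})\le 0$, which together with $\|\nabla g_{\mu_k}(x^k)\|\le M_GM_{\mathcal{B}}$ from \eqref{eq-bd-nabla g_mu}, $-g_{\mu_k}(x^k)\le -g_{\mathcal{B}}(x^k)\le -g^*_{\mathcal{B}}$ from \eqref{eq-gmu-appr}, and $L^{k,j}_g/\mu_k\ge\check L/\mu_0$, forces $\|x^{k,i,j}-x^k\|\le M_R$ via the quadratic formula, recovering exactly \eqref{eq-def-MR}. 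Hence the segment $[x^k,x^{k,i,j}]\subseteq\mathrm{conv}(\hat\Omega_0)$, where $\nabla g_{\mu_k}$ is $L_{\mu_k}$-Lipschitz by Lemma~\ref{lm-g-mu-grad-Lip}; the descent lemma then gives $g_{\mu_k}(x^{k,i,j})\le g_{k,j}(x^{k,i,j})+\frac12(L_{\mu_k}-L^{k,j}_g/\mu_k)\|x^{k,i,j}-x^k\|^2$, so feasibility is guaranteed once $L^{k,j}_g\ge\mu_kL_{\mu_k}$, and since $\mu\mapsto\mu L_\mu$ is nondecreasing, once $L^{k,j}_g\ge\mu_0L_{\mu_0}$. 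Plugging $x=x^k$ into \eqref{eq-ball-prox} and using $\lambda_{k,i,j}g_{k,j}(x^k)\le 0$ shows the descent test \eqref{eq-des-suf} holds whenever $L^{k,i}_f\ge (L_f+\tau_1)/2$ and $L^{k,j}_g\ge\tau_2$. I would then run the counter bookkeeping: $j$ increases at every failed inner step whereas $i$ increases only on a feasible-but-non-descent step, so $i\le j$ throughout; once $j\ge\bar j$ every trial is feasible with $L^{k,j}_g\ge\max\{\tau_2,\mu_0L_{\mu_0}\}$, after which each failure increments $i$, and at most $\bar i$ such failures suffice to reach $L^{k,i}_f\ge (L_f+\tau_1)/2$ and thus acceptance. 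This gives $i_k\le j_k\le\bar i+\bar j$ as in \eqref{eq-def-bar s}--\eqref{eq-def-bar j}, and the bounds \eqref{eq-bd-MLf}--\eqref{eq-bd-MLg} follow from $L^{k,0}_f,L^{k,0}_g\in[\check L,\hat L]$ together with $2^{\bar i}\le\max\{1,2(L_f+\tau_1)/\check L\}$ and $2^{\bar j}\le\max\{1,2\tau_2/\check L,2\mu_0L_{\mu_0}/\check L\}$.

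Item~(iv) is then immediate: acceptance in \textbf{Step 4} together with the assignments $x^{k+1}=x^{k,i_k,j_k}$ and $\lambda_{k+1}=\lambda_{k,i_k,j_k}$ in \textbf{Step 5} turns \eqref{eq-des-suf} into \eqref{eq-f-des}. The two genuinely delicate points, both inside item~(iii), are keeping every trial point inside the fixed compact set $\mathrm{conv}(\hat\Omega_0)$ on which $\nabla g_{\mu_k}$ is Lipschitz (the role of the uniform radius bound $M_R$ and of the enlargement $\hat\Omega_0$ in \eqref{eq-def-hat Omega}), and the combinatorial accounting of the two intertwined step-size counters, which must be carried out uniformly in $k$ so that the bound $\iota:=\bar i+\bar j+1$ on the number of \textbf{Step 4} invocations is independent of $k$.
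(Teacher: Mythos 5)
Your proposal is correct and follows essentially the same route as the paper's proof: induction on $k$ with item (i) as the invariant, strict feasibility $g_{\mu_k}(x^k)<0$ giving well-posedness of \eqref{opt-sub} and a KKT multiplier, the strong convexity of the Lagrangian yielding \eqref{eq-lambda-bd-hat x}--\eqref{eq-ball-prox}, the uniform radius bound $M_R$ placing all trial points in $\mathrm{conv}(\hat\Omega_0)$ so that the descent lemma produces the thresholds $L^{k,j}_g\ge \mu_0 L_{\mu_0}$ (feasibility) and $2L^{k,i}_f\ge L_f+\tau_1$, $L^{k,j}_g\ge\tau_2$ (descent), the same counter bookkeeping giving $i_k\le j_k\le \bar i+\bar j$, and \eqref{eq-gmu-str-des} re-establishing strict feasibility at iteration $k+1$. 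The only cosmetic differences are that the paper certifies the subproblem by rewriting the constraint as the ball $B(\hat x^{k,j},R_{k,j})$ with $R_{k,j}>0$ and citing Z\u{a}linescu rather than invoking Slater's condition directly, and bounds $\|x^{k,i,j}-x^k\|$ by a triangle inequality through the ball center, which is the same computation as your quadratic-formula argument.
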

	
	\begin{proof}
		We prove the lemma by induction. First, according to the discussion leading to \eqref{hahahaha}, we see that $\mu_0$ is well-defined and $g_{\mu_0}(x^0) < 0$.
		We thus assume, as an induction hypothesis, that, for some $k\in\mathbb{N}_{0}$, $x^{k} \in \Omega_{0}$
		and is strictly feasible for \eqref{opt-conic-dc} with
		$g_{\mu_{k}}(x^{k})<0$.
		We will first show that properties (ii), (iii) and (iv) hold at the $k$th iteration.
		Then we will prove that $x^{k+1}\in \Omega_{0}$ and is strictly feasible for problem
		\eqref{opt-conic-dc} with $g_{\mu_{k+1}}(x^{k+1})<0$.
		This will complete the induction step and establish property (i).
		Since property (i) implies (ii), (iii) and (iv) hold,
		the proof is complete.
		
		We first prove property (ii) under the induction hypothesis. Indeed, at the $k$th iteration and for any $i,j\in\mathbb{N}_{0}$, since
		$g_{\mu_{k}}(x^{k})<0$, $\mu_{k}>0$ and $L^{k,j}_{g}>0$, it holds that
		$- 2\mu_{k}^{-1}(L^{k,j}_{g})^{-1} g_{\mu_{k}}(x^{k})>0$. Notice that the $g_{k,j}$ in \eqref{opt-sub} can be rewritten as
		$g_{k,j}(x) = \frac{L^{k,j}_{g}}{2\mu_{k}}\left(\|x-\hat x^{k,j}\|^2 - {R}_{k,j}^2\right)$
		with
		\begin{align}
			& \hat x^{k,j} = x^{k} - \mu_{k}(L^{k,j}_{g})^{-1} \nabla g_{\mu_{k}}(x^{k}) \label{eq-def-center22},\\
			&R_{k,j} = \mu_{k} (L^{k,j}_{g})^{-1} \sqrt{\| \nabla g_{\mu_{k}}(x^{k})\|^2  -
				2\mu_{k}^{-1}L_{g}^{k,j} g_{\mu_{k}}(x^{k})}, \notag
		\end{align}
		we see that $R_{k,j}$ is (well-defined and) positive.
		Thus, \eqref{opt-sub} is to minimize a strongly convex
		function over the closed ball $B(\hat{x}^{k,j}, R_{k,j})$ with positive radius.
		In view of \cite[Theorem 2.9.2]{02Zalinescu}, \eqref{opt-sub} has a
		unique solution $x^{k,i,j}\in{\mathbb X}$ with
		a Lagrange multiplier $\lambda_{k, i, j} \in \mathbb{R}_{+}$ such that
		\eqref{eq-slack} holds and
		$
			x^{k,i,j} = \mathrm{argmin}_{x\in{\mathbb X}} \mathcal{L}_{k,i,j}(x,
			\lambda_{k,i,j})$,
		where $\mathcal{L}_{k,i,j}(x, \lambda) := \psi_{k,i}(x)  +
			\lambda g_{k,j}(x)$ for $x\in \mathbb{X}$ and $\lambda\in \mathbb{R}$.
		The relation \eqref{eq-res} follows from $0\in \partial_{x}\mathcal{L}_{k,i,j}(x^{k,i,j},
		\lambda_{k,i,j})$.
		Next, note that  $\mathcal{L}_{k,i,j}(\cdot, \lambda_{k,i,j})$ is
		${\changescolor\widetilde L_{k,i,j}}/\mu_k$-strongly convex. This together with \eqref{eq-slack} gives, for all $x\in \mathbb{X}$,
		\begin{align*}
				& P_1(x^{k,i,j}) + \langle \nabla f(x^{k}) - \xi^{k},\, x^{k,i,j}-x^{k}
				\rangle + \frac{L^{k,i}_{f}}{2}\|x^{k,i,j} - x^{k}\|^2
				\\ & = {\changescolor \mathcal{L}_{k,i,j}}(x^{k,i,j}, \lambda_{k,i,j})
				\le {\changescolor \mathcal{L}_{k,i,j}}(x, \lambda_{k,i,j}) - \frac{\changescolor\widetilde L_{k,i,j}}{2\mu_{k}}
				\|x - x^{k,i,j}\|^2
				\\ & {\changescolor =} P_1(x) \!+\! \langle \nabla f(x^{k}) - \xi^{k},\, x-x^{k} \rangle
				\!+\! \frac{L^{k,i}_{f}}{2}\|x-x^{k}\|^2 \!+\! \lambda_{k,i,j}g_{k,j}(x) \!-\! \frac{\changescolor\widetilde L_{k,i,j}}{2\mu_{k}}
				\|x - x^{k,i,j}\|^2.
		\end{align*}
		Rearranging terms in the above display, one can see that
		\eqref{eq-lambda-bd-hat x} holds at the $k$th iteration.
		On the other hand, by the convexity of $P_{2}$, we have
		\begin{equation*}
			-P_{2}(x^{k,i,j}) + \langle \xi^{k}, x^{k,i,j} - x^{k} \rangle \le -P_{2}(x^{k}).
		\end{equation*}
		In addition, by the descent lemma, it holds that
		\begin{equation*}
			f(x^{k,i,j}) \le f(x^{k}) + \langle \nabla f(x^{k}),\, x^{k,i,j} - x^{k}\rangle
			+ \frac{L_{f}}{2}\|x^{k,i,j} - x^{k}\|^2.
		\end{equation*}
		Summing the above three displays and rearranging terms, one can see that
		\eqref{eq-ball-prox} holds at the $k$th iteration.
		
		Next, we show that properties (iii) and (iv) hold at the $k$th iteration.
		Let $i,j\in\mathbb{N}_{0}$.
		Let $\hat{x}^{k,j}$ be defined in \eqref{eq-def-center22}, and $g^{*}_{\mathcal B}$, $M_{R}$ and $\hat \Omega_{0}$ be defined in Lemma~\ref{lm-g-mu-grad-Lip}.
		Then
		\begin{equation*}
			\begin{split}
				& \|x^{k,i,j} - x^{k}\|  \le \|x^{k,i,j} - \hat{x}^{k,j}\| + \|\hat{x}^{k,j} -
				x^{k}\|
				\\ & \le \sqrt{\mu_{k}^2 (L^{k,j}_{g})^{-2}\| \nabla g_{\mu_{k}}(x^{k})\|^2  -
					2\mu_{k}(L^{k,j}_{g})^{-1} g_{\mu_{k}}(x^{k})} + \mu_{k} (L^{k,j}_{g})^{-1} \|
				\nabla g_{\mu_{k}} (x^{k})\|
				\\ & \le \sqrt{\mu_{0}^2\check{L}^{-2}M_G^2 M_{\mathcal{B}}^2 - 2
					\mu_{0}\check{L}^{-1} g^{*}_{\mathcal B}} + \mu_{0} \check{L}^{-1} M_{G} M_{\mathcal{B}} =
				M_{R},
			\end{split}
		\end{equation*}
		where the second inequality follows from $\|x^{k,i,j} - \hat{x}^{k,j}\|\le R_{k,j}$ since $x^{k,i,j}$ is feasible for (\ref{opt-sub}),
		and the last inequality holds because of \eqref{eq-bd-nabla g_mu}, $0<\mu_{k}\le \mu_{0}$, $0<\check{L}\le
		L^{k,j}_{g}$ and $-\infty < g_{\mathcal{B}}^{*}\le g_{\cal B}(x^k)\le g_{\mu_{k}}(x^{k}) < 0$ (see \eqref{eq-gmu-appr} for $g_{\cal B}\le g_{\mu_k}$).
		Recalling that $x^{k}\in \Omega_{0}$, we have $x^{k,i,j}\in \hat \Omega_{0}=\Omega_{0} +
		B\left(0, M_R\right)$.
		From Lemma~\ref{lm-g-mu-grad-Lip}, $g_{\mu_{k}}$ is
		$L_{\mu_{k}}$-smooth on $\mathrm{conv}(\hat \Omega_{0})$.
		By the descent lemma, we have
		\begin{equation}\label{eq-g-ls}
			\begin{split}
				g_{\mu_{k}}(x^{k,i,j}) & \le g_{\mu_{k}}(x^{k}) + \langle \nabla
				g_{\mu_{k}}(x^{k}),\, x^{k,i,j} - x^k \rangle
				+ \frac{L_{\mu_{k}}}{2}\|x^{k,i,j} - x^{k}\|^2
				\\& \le - \frac{L_{g}^{k,j} - \mu_{k} L_{\mu_{k}}}{2\mu_{k}}\|x^{k,i,j} -
				x^{k}\|^2,
			\end{split}
		\end{equation}
		where we use the feasibility of $x^{k,i,j}$ for \eqref{opt-sub} to
		get the last inequality.
		On the other hand, by setting $x = x^{k}$ in \eqref{eq-ball-prox}, we have
		\begin{equation}\label{eq-psi-ls}
			\begin{split}
				\psi(x^{k,i,j})
				& \le \psi(x^{k}) \!+\! \lambda_{k,i,j} g_{\mu_{k}}(x^{k}) \!-\!  \frac{1}{2}\left( 2
				L^{k,i}_{f} \!-\! L_f \!+\! \frac{\lambda_{k,i,j} L^{k,j}_{g}}{\mu_{k}}
				\right)\|x^{k,i,j} \!-\! x^k\|^2
				\\ & \le \psi(x^{k}) -  \frac{1}{2}\left( 2 L^{k,i}_{f} - L_f +
				\frac{\lambda_{k,i,j} L^{k,j}_{g}}{\mu_{k}}  \right) \|x^{k,i,j} - x^k\|^2,
			\end{split}
		\end{equation}
		where the last inequality follows since $\lambda_{k,i,j}\ge 0$ and
		$g_{\mu_{k}}(x^{k})< 0$.

		Let $\bar i$ and $\bar j$ be defined in \eqref{eq-def-bar s} and \eqref{eq-def-bar j}, respectively.
		Then, it holds that
		\begin{align}
			& \max\{\check{L}, L_{f} + \tau_1\} \le 2^{\bar i}\check{L} \le  \max\{2\check{L}, 2(L_{f} + \tau_1)\}, \label{eq-sL-bd}\\
			& \max\{\check{L}, \tau_2, \mu_{0}L_{\mu_{0}} \} \le 2^{\bar{j}} \check{L} \le \max\{2\check{L}, 2\tau_2, 2\mu_{0}L_{\mu_{0}} \}. \label{eq-tL-bd}
		\end{align}
		From this, one can see that, if $i\ge \bar i$ and $j\ge \bar j$, then
		\begin{align*}
			& 2L_{f}^{k,i} = 2^{i+1}L^{k,0}_{f} \ge 2^{\bar i} \check{L} \ge L_f + \tau_1, \\
			& L^{k, j}_{g} = 2^{j}L_{g}^{k,0} \ge 2^{\bar{j}} \check{L} \ge \max\{\tau_2, \mu_{0}
			L_{\mu_{0}}\} \ge  \max\{\tau_2, \mu_{k} L_{\mu_{k}}\},
		\end{align*}
		where the last inequality follows from the definition of $L_{\mu_{k}}$ in \eqref{eq-L-mu} and the fact that $\{\mu_{k}\}$ is decreasing.
		Combining these with \eqref{eq-g-ls} and \eqref{eq-psi-ls}, we have
		\eqref{eq-des-suf} and $g_{\mu_{k}}(x^{k,i,j}) \le 0$ hold whenever
		$i\ge \bar i$ and $j\ge \bar j$.
		Hence, from the update rule, at the $k$th iteration, we see that the inner loop must terminate for some
		$i_k \le j_k \le \bar i + \bar j$, and output an $(x^{k+1},
		\lambda_{k+1})$ satisfying \eqref{eq-f-des}.
		Moreover, from \eqref{eq-sL-bd} and \eqref{eq-tL-bd}, we have
		\begin{align*}
			\check{L} \le L^{k,0}_{f} \le L_{f}^{k} \!=\! 2^{i_{k}} L^{k,0}_{f}
			\le 2^{\bar i+\bar j} \hat{L} \le M_{L}, \\
\check{L} \le L^{k,0}_{g} \le L_{g}^{k} \!=\! 2^{j_{k}} L^{k,0}_{g}
			\le 2^{\bar i + \bar j} \hat L
			\le M_{L},
		\end{align*}
where $M_{L}$ is given in \eqref{eq-bd-MLg}.
		This shows that \eqref{eq-bd-MLf} holds at the $k$th
		iteration.
		Consequently, properties (iii) and (iv) also hold at the $k$th iteration.
		
		Finally, from \eqref{eq-gmu-str-des}, we
		obtain
		$
			g_{\mu_{k+1}}(x^{k+1}) \le g_{\mu_{k}}(x^{k+1}) - \alpha_{4} (\mu_{k} -
			\mu_{k+1}) <0
        $,
		where the last inequality holds because $g_{\mu_{k}}(x^{k+1}) \le 0$ (see \eqref{eq-f-des}), $\mu_{k+1}<\mu_{k}$ and $\alpha_{4}>0$.
		Combining this with \eqref{eq-gmu-appr}, we deduce that
		$
		\sigma_{\mathcal{B}}(G(x^{k+1})) \le g_{\mu_{k+1}}(x^{k+1})<0
		$.
		Applying Lemma~\ref{lm-base}, we see that $G(x^{k+1})\in
		\mathrm{int}(\mathcal{K})$.
		From this and \eqref{eq-f-des}, we conclude that $x^{k+1} \in \Omega_{0}$
		and is strictly feasible for \eqref{opt-conic-dc}.
		This completes the proof.
	\end{proof}
	\begin{remark}
		One can see that the $g_{k,j}$ in \eqref{opt-sub} can be reformulated as:
		\begin{equation}\label{eq-sub-const-ball}
			g_{k,j}(x) = \frac{L^{k,j}_{g}}{2\mu_{k}}\left(\|x-\hat x^{k,j}\|^2 - {R}_{k,j}^2\right),
		\end{equation}
		where
		\begin{align}
			& \hat x^{k,j} = x^{k} - \mu_{k}(L^{k,j}_{g})^{-1} \nabla g_{\mu_{k}}(x^{k}) \label{eq-def-center},\\
			&R_{k,j} = \mu_{k} (L^{k,j}_{g})^{-1} \sqrt{\| \nabla g_{\mu_{k}}(x^{k})\|^2  -
				2\mu_{k}^{-1}L_{g}^{k,j} g_{\mu_{k}}(x^{k})}. \label{eq-def-radius}
		\end{align}
		Thus, the feasible set of \eqref{opt-sub} is $B(\hat{x}^{k,j}, {R}_{k,j})$, and $R_{k,j} > 0$ thanks to Lemma~\ref{lm-well-defined}(i).
	\end{remark}
    
	\subsection{Complexity analysis}\label{sec:complexity} In this subsection, we derive the iteration complexity of Algorithm~\ref{alg-SMBA} for finding an {\changescolor $(\epsilon_1, \epsilon_2, \epsilon_1 \sqrt{\epsilon_2})$}-KKT point of \eqref{opt-conic-dc} in the sense of Definition~\ref{def-epsilon-KKT}. Notice that the $k$th iteration of the algorithm is essentially solving the subproblem corresponding to applying the moving balls approximation method to \eqref{opt-supp-dc} with $g_{\mu_k}$ in place of $g_{\cal B}$. It is thus natural to expect that one has to carefully control how the $\mu_k$ (and hence the $g_{\mu_k}$) changes from iteration to iteration to guarantee the convergence of $\{x^k\}$. Here, we specifically consider the following assumption on $\{\mu_k\}$.\footnote{Note that the requirements on positivity and monotonicity in Assumption~\ref{ass-muk} were explicitly used in Algorithm~\ref{alg-SMBA} as the (basic) criterion for choosing $\{\mu_k\}$; see {\bf Step 5}. Here, we explicitly include this basic criterion as part of Assumption~\ref{ass-muk} for easy reference.}
	{\changescolor
		\begin{assumption}\label{ass-muk}
			The sequence $\{\mu_{k}\}$ is positive, decreasing (i.e., $\mu_{k+1} < \mu_k$ for all $k\in \mathbb{N}_0$), and
			satisfies 
			\begin{equation}\label{eq-mu-infty}
				\lim_{k\to\infty}\mu_k = 0 
				\quad \mbox{and} \quad 
				S_{K} \coloneqq \sum_{k=\lceil K/2 \rceil}^{K} \mu_{k} \to \infty
				\quad \mbox{as} \quad K \to \infty.
			\end{equation}
	\end{assumption}}	
	
	{\changescolor We next present a specific choice of $\{\mu_{k}\}$ that satisfies Assumption~\ref{ass-muk}.}
	{\changescolor
	\begin{proposition}\label{prop-muk}
		Let $\{r_{k}\}\subset (0, 1)$
		be a non-decreasing sequence such that ${\bar r} \coloneqq \sup_{k\in \mathbb{N}_{0}} r_{k} < 1$.
		Let $n_0 \in \mathbb{N}_{0}$ and $\nu_{0} \in (0, 1]$.
		For each $k\in \mathbb{N}_{0}$, let $k_1$, $k_2\in
		\mathbb{N}_{0}$ satisfy $k = k_2 (n_0+1) + k_1$ and $k_1\le n_0$.
		Let $\mu_0 > 0$ and define\!\!
		\begin{equation}\label{eq-def-muk}
			\mu_{k} \coloneqq  \mu_{k_1,k_2} \coloneqq  \mu_{0}\left({k_2 (n_0 +1) + \nu_{0} k_1 +
				1}\right)^{-r_k}\quad \forall\, k\in \mathbb{N}.
		\end{equation}
		Then $\{\mu_k\}$ satisfies Assumption~\ref{ass-muk}.
		Moreover, we have
		\begin{equation}\label{eq-sum-K-lb}
			\sum_{k = \lceil K/2 \rceil}^{K} \mu_{k} \ge \frac{\mu_{0}}{2^{2{\bar r}+1}} K^{1-{\bar r}} \quad \forall K\in \mathbb{N}_{0}.
		\end{equation}
\end{proposition}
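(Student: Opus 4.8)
The plan is to work throughout with the base of the power appearing in \eqref{eq-def-muk}. Writing $A_k := k_2(n_0+1) + \nu_0 k_1 + 1$ so that $\mu_k = \mu_0 A_k^{-r_k}$, the first step is to record the two elementary bounds that drive everything else. Since $k = k_2(n_0+1) + k_1$, we have the identity $A_k = k + 1 - (1-\nu_0)k_1$, and because $\nu_0 \in (0,1]$ and $0 \le k_1 \le n_0$ this yields $1 \le A_k \le k+1$ for every $k$; moreover $A_k \ge k_2(n_0+1) + 1 \to \infty$ as $k \to \infty$. Positivity of $\{\mu_k\}$ is then immediate from $\mu_0 > 0$ and $A_k \ge 1$.

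For the monotonicity required in Assumption~\ref{ass-muk}, I would first show that $\{A_k\}$ is strictly increasing. Passing from $k$ to $k+1$ either increments $k_1$ by one (when $k_1 < n_0$), raising $A_k$ by exactly $\nu_0 > 0$, or triggers a carry $k_1 = n_0 \mapsto 0$, $k_2 \mapsto k_2 + 1$ (when $k_1 = n_0$), raising $A_k$ by $(n_0+1) - \nu_0 n_0 = 1 + (1-\nu_0)n_0 \ge 1 > 0$. Hence $A_{k+1} > A_k$ in all cases. To conclude $\mu_{k+1} < \mu_k$, i.e. $A_{k+1}^{r_{k+1}} > A_k^{r_k}$, I would isolate the single degenerate value $A_k = 1$: since $A_k = 1$ means $k_2(n_0+1) + \nu_0 k_1 = 0$, it forces $k_2 = k_1 = 0$ and thus occurs only at $k = 0$, where $A_1 > 1$ gives $\mu_1 < \mu_0$ at once. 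For $k \ge 1$ we have $A_k > 1$, so $A_{k+1}^{r_{k+1}} \ge A_{k+1}^{r_k}$ by $r_{k+1} \ge r_k$ and $A_{k+1} > 1$, while $A_{k+1}^{r_k} > A_k^{r_k}$ by strict monotonicity of $t \mapsto t^{r_k}$ on $(0,\infty)$; chaining these gives the claim. Finally, $\lim_k \mu_k = 0$ follows from $\mu_k = \mu_0 A_k^{-r_k} \le \mu_0 A_k^{-r_0}$ (using $r_k \ge r_0 > 0$ and $A_k \ge 1$) together with $A_k \to \infty$.

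It remains to establish \eqref{eq-sum-K-lb}, which simultaneously delivers the divergence $S_K \to \infty$ in \eqref{eq-mu-infty} because $1 - \bar r > 0$. The key pointwise bound is $\mu_k = \mu_0 A_k^{-r_k} \ge \mu_0 A_k^{-\bar r} \ge \mu_0 (k+1)^{-\bar r}$, obtained from $r_k \le \bar r$ and $A_k \le k+1$ (both applied on a base $\ge 1$). For $K \ge 1$ and every index $k$ in the summation range $\lceil K/2\rceil \le k \le K$ we have $k + 1 \le K + 1 \le 2K$, hence $\mu_k \ge \mu_0 (2K)^{-\bar r} = \mu_0 2^{-\bar r} K^{-\bar r}$; since this range contains at least $K/2$ integers, summing gives $S_K \ge \frac{\mu_0}{2^{\bar r+1}} K^{1-\bar r} \ge \frac{\mu_0}{2^{2\bar r + 1}} K^{1-\bar r}$, the final inequality being free as $\bar r \ge 0$. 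The case $K = 0$ is trivial, since the right-hand side of \eqref{eq-sum-K-lb} vanishes there.

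The only genuinely delicate step is the monotonicity argument: one must check that the carry $k_1 = n_0 \mapsto 0$ does not undo the decrease, and reconcile the simultaneous growth of $r_k$ and $A_k$ on the base $A_k > 1$ (with the lone base-equal-one value $k = 0$ handled by hand). Everything else is a direct consequence of the two bounds $1 \le A_k \le k+1$ and the limit $A_k \to \infty$.
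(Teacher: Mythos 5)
Your proof is correct and takes essentially the same approach as the paper's: the same two elementary bounds $1 \le A_k \le k+1$ (giving positivity, $\mu_k \to 0$, and the pointwise estimate $\mu_k \ge \mu_0 (k+1)^{-\bar r}$) and the same two-case increment-versus-carry analysis for strict monotonicity. The only cosmetic differences are that you bound $S_K$ by counting the at least $K/2$ terms, each at least $\mu_0 (2K)^{-\bar r}$, instead of the paper's integral comparison (your route even gives the slightly sharper constant $2^{-(\bar r+1)}$), and your separate treatment of $k=0$ in the monotonicity step is not actually needed, since the chain $A_{k+1}^{r_{k+1}} \ge A_{k+1}^{r_k} > A_k^{r_k}$ only requires $A_{k+1} \ge 1$, which always holds.
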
}
\begin{remark}
In \eqref{eq-def-muk},
  by choosing $\{r_k\}$ to be a constant sequence and $\nu_0$ to be small relative to $n_0$ (say, $n_0 = 300$ and $\nu_0 = 10^{-4}$), the corresponding $\mu_k$ can change very slowly for $n_0$ iterations (essentially constant) and experience an abrupt change only after every $n_0$ iterations. This choice allows us to mimic the usual smoothing strategy where subproblems with a fixed $\mu_k$ will be solved repeatedly before updating $\mu_k$, as well as the multiple epoch scheme considered in \cite[Section~4.2]{21BW}.
\end{remark}
\begin{proof}[Proof of Proposition~\ref{prop-muk}]
{\changescolor
		Let $k\in \mathbb{N}_{0}$.
		Clearly, there exists a unique pair $k_1,k_2\in \mathbb{N}_{0}$ such that $k = k_2 (n_0+1) + k_1$ and $k_1\le n_0$.
		Since $0<\nu_0 \le 1$, we have from \eqref{eq-def-muk} that, for every $k\in \mathbb{N}_{0}$
		\begin{align*}
			\mu_{k} \le \mu_{0}\nu_{0}^{-r_k} \left({k_2 (n_0 +1) +  k_1 +1}\right)^{-r_k}
			\le \mu_{0}\nu_{0}^{-\bar{r}} (k+1)^{-r_0},
		\end{align*}
		which shows $\lim_{k\to \infty} \mu_k = 0$.
		Moreover, it holds that, for every $k\in \mathbb{N}_{0}$
		\begin{equation*}
			\mu_{k} \ge \mu_{0}\left({k_2 (n_0 +1) +  k_1 +1}\right)^{-r_k} = \mu_0 (k+1)^{- r_k} \ge \mu_0 (k+1)^{- {\bar r}}.
		\end{equation*}	
		For any integer $K\in \mathbb{N}_0$, using the above display, we have	
		\begin{equation*}
			\begin{split}
				& S_{K} = \sum_{k = \lceil K/2 \rceil}^{K} \mu_{k} 
				 \ge \mu_{0} \sum_{k = \lceil K/2 \rceil}^{K} (k+1)^{-{\bar r}}
                 \ge \mu_0 \sum_{k = \lceil K/2 \rceil}^{K}\int_{k}^{k+1}(t + 1)^{-{\bar r}} dt
				\\& = \mu_{0} \int_{\lceil K/2 \rceil}^{K+1} (t + 1)^{-{\bar r}} dt
				\overset{\mathrm{(a)}}{\ge}  \mu_{0} \frac{K + 1 - \lceil K/2 \rceil}{(K+2)^{\bar r}} 
				 \overset{\mathrm{(b)}}{\ge} \frac{\mu_{0}}{2} K (K + 2)^{-{\bar r}} 
				\ge \frac{\mu_{0}}{2^{2\bar r+1}} K^{1-{\bar r}},
			\end{split}
		\end{equation*}
		where (a) holds because $(t+1)^{-{\bar r}} \ge (K+2)^{-{\bar r}}$ for all $t \in [\lceil K/2\rceil, K+1]$ , and (b) holds because $\lceil K/2 \rceil \le K/2 + 1$.
		This shows \eqref{eq-sum-K-lb} and $\lim_{K\to\infty} \sum_{k=[K/2]}^{K} \mu_{k} = \infty$.}
		
		Next, we claim that $\{\mu_k\}$ is decreasing.
		Indeed, for any $k\in\mathbb{N}_0$, if $k_1 < n_0$, then $k+1 = k_2 (n_0+1) + k_1+1$ with $k_1+1\le n_0$.
		That is $\mu_{k+1} = \mu_{k_1+1,k_2}$.
		It follows that
		\begin{equation*}
			\begin{split}
				\mu_{k} - & \mu_{k+1}  = \mu_{0}\left({k_2 (n_0 +1) + \nu_{0} k_1 + 1}\right)^{-r_k}
				- \mu_{0}\left({k_2 (n_0 +1) + \nu_{0} (k_1 + 1) + 1}\right)^{-r_{k+1}}
				\\ & \ge \mu_{0}\left({k_2 (n_0 +1) + \nu_{0} k_1 + 1}\right)^{-r_{k+1}}
				- \mu_{0}\left({k_2 (n_0 +1) + \nu_{0} (k_1 + 1) + 1}\right)^{-r_{k+1}} > 0,
			\end{split}
		\end{equation*}
		where {\changescolor the first inequality} holds because of $0 < r_{k} \le r_{k+1}$, {\changescolor and the positivity holds because $\nu_0 > 0$.}
		
		On the other hand, if $k_1 = n_0$, then $k+1 = (k_2 + 1) (n_0 + 1)$.
		That is $\mu_{k+1} = \mu_{0, k_2 + 1}$.
		It holds that
		\begin{equation*}
			\begin{split}
				\mu_{k} - \mu_{k+1} & = \mu_{0}\left({k_2 (n_0 +1) + \nu_{0} n_0 + 1}\right)^{-r_k}
				- \mu_{0}\left({(k_2 + 1) (n_0 +1) + 1}\right)^{-r_{k+1}}
				\\ & \ge \mu_{0}\left({k_2 (n_0 +1) + \nu_{0} n_0 + 1}\right)^{-r_{k+1}}
				- \mu_{0}\left({(k_2 + 1) (n_0 +1) + 1}\right)^{-r_{k+1}}>0,
			\end{split}
		\end{equation*}
		where {\changescolor the first inequality} holds thanks to $0 < r_{k} \le r_{k+1}$, and the positivity follows from the fact that $\nu_0\in (0,1]$.
\end{proof}
	
	We need the following constants in the subsequent analysis:
	\begin{align}
		&\textstyle M_f \coloneqq \sup_{x\in \Omega_{0}} \|\nabla f(x)\|, \label{eq-def-Mf}\\
		&\textstyle M_{P_1}\coloneqq \sup_{x\in \hat\Omega_{0}}\left\{ \|\zeta\| : \zeta\in \partial
		P_1(x)\right\}
		\ \mbox{and}\
		M_{P_2} \coloneqq  \sup_{x\in \Omega_{0}}\left\{\|\xi\| : \xi\in \partial
		P_2(x)\right\}, \label{eq-def-MP12}
	\end{align}
	where $\Omega_{0}$ is given in Assumption~\ref{ass-gen} and $\hat{\Omega}_{0}$ is given in \eqref{eq-def-hat Omega}.
	Under Assumptions~\ref{ass-gen} and \ref{ass-sm}, one can see that $M_f$, $M_{P_1}$ and $M_{P_2}$ are all finite.
	In the next theorem, we establish a bound on $\{\lambda_{k+1}\}$ involving these constants.
	\begin{theorem}[{Boundedness of Lagrange multiplier}]
		Consider \eqref{opt-conic-dc} and \eqref{opt-supp-dc}, and suppose that Assumptions~\ref{ass-gen}, \ref{ass-sm}, and \ref{ass-muk} hold.
		Let $\eta$ and $M_{L}$ be given respectively in Lemma~\ref{lm-mfcq-region} and \eqref{eq-bd-MLg},
		$M_{\mathcal{B}}$, $M_{G}$ and $M_f$ be given in \eqref{eq-M-B-def}, \eqref{eq-M-G-def} and \eqref{eq-def-Mf}, respectively,
		and $M_{P_1}$ and $M_{P_2}$ be given in \eqref{eq-def-MP12}.
		Let $\{\lambda_{k+1}\}$ be the
		sequence generated by Algorithm~\ref{alg-SMBA}.
		Then it holds that
		\begin{equation} \label{eq-bd-lambda}
			\lambda_{k+1} \le  \frac{M_f + M_{P_1}+M_{P_2}}{\bar \eta}
			+ \frac{M_{L} M_{G}^2M_{\mathcal{B}}^2
				\mu_{0}}{2\check{L} \bar \eta^2}
			\eqqcolon  M_{\lambda} \quad \forall\, k\in \mathbb{N}_{0},
		\end{equation}
		where
		\begin{equation}\label{eq-def-eta-R}
			\bar \eta =  \sqrt{\mathrm{min}\left\{\eta, 2\alpha_{3}\check{L},
				-2\mu_{0}^{-1}\check{L}g_{\mu_{0}}(x^{0}),
				2c_0\alpha_4\check{L}
				\right\}} > 0
		\end{equation}
        with
        \begin{equation}\label{c0def}
        c_0:=\min\{\mu_{t-1}/\mu_t-1: t = 1,\ldots,\bar t\,\} > 0
        \end{equation}
        and $\bar t$ being the first positive integer such that $\mu_{\bar t} < \eta/(2\alpha_3)$.\footnote{Note that $\mu_t > \mu_{t+1}$ for all $t\in \mathbb{N}_0$ and $\lim_{t\to\infty}\mu_t=0$ as consequences of Assumption~\ref{ass-muk}. These imply $\bar t < \infty$ and $c_0 > 0$.}
	\end{theorem}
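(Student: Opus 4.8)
The plan is to dispose of the trivial case $\lambda_{k+1}=0$ and concentrate on $\lambda_{k+1}>0$, where the complementarity relation in \eqref{eq-slack} forces the ball constraint in \eqref{opt-sub} to be active at $x^{k+1}$. Writing $s^k:=x^{k+1}-x^k$ and using the ball representation \eqref{eq-sub-const-ball}--\eqref{eq-def-radius}, the constraint gradient at the solution is $\nabla g_{k,j_k}(x^{k+1})=\nabla g_{\mu_k}(x^k)+\frac{L^k_g}{\mu_k}s^k=\frac{L^k_g}{\mu_k}(x^{k+1}-\hat x^{k,j_k})$, and since $x^{k+1}$ lies on the sphere of radius $R_{k,j_k}$ its squared norm equals
\[
\|\nabla g_{k,j_k}(x^{k+1})\|^2=\|\nabla g_{\mu_k}(x^k)\|^2+\frac{2L^k_g}{\mu_k}\bigl(-g_{\mu_k}(x^k)\bigr).
\]
Both summands on the right are nonnegative, so this identity is the engine of the argument: a lower bound on \emph{either} term supplies a lower bound on $\|\nabla g_{k,j_k}(x^{k+1})\|$.

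The central step is to establish the uniform lower bound $\|\nabla g_{k,j_k}(x^{k+1})\|\ge\bar\eta$ for every $k$, and the four quantities inside the $\min$ defining $\bar\eta^2$ in \eqref{eq-def-eta-R} correspond to the four mechanisms that furnish it. For $k=0$, strict feasibility $g_{\mu_0}(x^0)<0$ together with $\bar\eta^2\le-2\mu_0^{-1}\check L g_{\mu_0}(x^0)$ gives $\frac{2L^0_g}{\mu_0}(-g_{\mu_0}(x^0))\ge\bar\eta^2$. For $1\le k\le\bar t$, combining the strict-descent inequality \eqref{eq-gmu-str-des} (applied at iteration $k-1$) with $g_{\mu_{k-1}}(x^k)\le0$ from \eqref{eq-f-des} yields $-g_{\mu_k}(x^k)\ge\alpha_4(\mu_{k-1}-\mu_k)\ge c_0\alpha_4\mu_k$ by the definition \eqref{c0def} of $c_0$; since $\bar\eta^2\le2c_0\alpha_4\check L$, this forces $\frac{2L^k_g}{\mu_k}(-g_{\mu_k}(x^k))\ge\bar\eta^2$. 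Finally, for $k>\bar t$ (where $\mu_k<\eta/(2\alpha_3)$, so the smoothing error is small), I invoke Lemma~\ref{lm-mfcq-region} with $u^k:=\nabla h_{\mu_k}(G(x^k))$, which lies in $\mathcal B$ by Proposition~\ref{prop-eps-subdiff} and \eqref{eq-sig-subdiff}: either $\|\nabla g_{\mu_k}(x^k)\|^2=\|DG(x^k)^\ast u^k\|^2>\eta\ge\bar\eta^2$ directly, or $-\langle u^k,G(x^k)\rangle>\eta$, in which case \eqref{eq-sig-subdiff} together with \eqref{eq-gmu-appr} converts the slack into $-g_{\mu_k}(x^k)\ge\eta-2\alpha_3\mu_k$, which is again fed into the identity through the term $2\alpha_3\check L$ of $\bar\eta^2$.

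With $\|\nabla g_{k,j_k}(x^{k+1})\|\ge\bar\eta$ in hand, I would extract the multiplier bound from the optimality condition \eqref{eq-res}. Collecting the proximal terms shows $\lambda_{k+1}\nabla g_{k,j_k}(x^{k+1})=-\bigl(\zeta+\nabla f(x^k)-\xi^k+L^k_f s^k\bigr)$ for some $\zeta\in\partial P_1(x^{k+1})$; taking norms and using $\|\zeta\|\le M_{P_1}$, $\|\nabla f(x^k)\|\le M_f$, $\|\xi^k\|\le M_{P_2}$ gives $\lambda_{k+1}\bar\eta\le(M_f+M_{P_1}+M_{P_2})+L^k_f\|s^k\|$. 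The remaining work is to control the proximal contribution $L^k_f\|s^k\|$: from $s^k=\frac{\mu_k}{L^k_g}\bigl(\nabla g_{k,j_k}(x^{k+1})-\nabla g_{\mu_k}(x^k)\bigr)$ together with $\|\nabla g_{\mu_k}(x^k)\|\le M_G M_{\mathcal B}$ from \eqref{eq-bd-nabla g_mu}, $L^k_f,L^k_g\in[\check L,M_L]$ from \eqref{eq-bd-MLf}, and $\mu_k\le\mu_0$, this term is controlled in terms of $\mu_0$ and $M_G M_{\mathcal B}$; dividing by $\|\nabla g_{k,j_k}(x^{k+1})\|\ge\bar\eta$ then produces the two summands of $M_\lambda$ in \eqref{eq-bd-lambda}. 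I expect the main obstacle to be the third (MFCQ) case of the lower bound: the passage from the constraint slack $-\langle u^k,G(x^k)\rangle>\eta$ to a lower bound on $-g_{\mu_k}(x^k)$ loses a smoothing error of order $\alpha_3\mu_k$, so the threshold defining $\bar t$ and the appearance of $2\alpha_3\check L$ inside $\bar\eta^2$ must be calibrated carefully to guarantee $\frac{2L^k_g}{\mu_k}(-g_{\mu_k}(x^k))\ge\bar\eta^2$ uniformly throughout this regime.
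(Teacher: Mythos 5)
Your proposal follows the paper's architecture quite closely: dismiss $\lambda_{k+1}=0$ via complementarity, then on the active-ball case prove the uniform lower bound $\Delta_k \coloneqq \mu_k^{-1}L^k_g R_{k,j_k} = \|\nabla g_{k,j_k}(x^{k+1})\| \ge \bar\eta$ through exactly the four mechanisms encoded in \eqref{eq-def-eta-R}; your cases $k=0$ and $1\le k\le\bar t$ are correct and coincide with the paper's. However, the branch you yourself flag as ``the main obstacle'' is not merely delicate --- as you set it up, it fails, and you leave it unresolved. Converting $-\langle u^k,G(x^k)\rangle>\eta$ into a bound on $-g_{\mu_k}(x^k)$ via the $\epsilon$-subdifferential characterization \eqref{eq-sig-subdiff} and then \eqref{eq-gmu-appr} costs \emph{two} smoothing errors, giving only $-g_{\mu_k}(x^k)>\eta-2\alpha_3\mu_k$. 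Since for $k>\bar t$ all you know is $\mu_k<\eta/(2\alpha_3)$, this yields $\frac{2L^k_g}{\mu_k}(-g_{\mu_k}(x^k))\ge 2\check{L}\,(\eta/\mu_k-2\alpha_3)$, which is positive but tends to $0$ as $\mu_k\uparrow\eta/(2\alpha_3)$, so it need not reach $2\alpha_3\check{L}\ge\bar\eta^2$; and you cannot recalibrate $\bar t$ or $\bar\eta$, since both are fixed by the statement. The repair is to lose only \emph{one} smoothing error, as the paper does in \eqref{eq-sla-bd-k}: convexity of $h_{\mu_k}$ together with $h_{\mu_k}(0)\le\sigma_{\mathcal{B}}(0)+\alpha_3\mu_k=\alpha_3\mu_k$ gives $-\langle u^k,G(x^k)\rangle\le h_{\mu_k}(0)-h_{\mu_k}(G(x^k))\le\alpha_3\mu_k-g_{\mu_k}(x^k)$, whence the slack branch yields $-g_{\mu_k}(x^k)>\eta-\alpha_3\mu_k$ and $2\check{L}(\eta/\mu_k-\alpha_3)>2\alpha_3\check{L}$ whenever $\mu_k<\eta/(2\alpha_3)$.

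Your extraction of the multiplier bound is also a genuine gap, because the specific constant $M_\lambda$ is part of the statement. From the stationarity inclusion \eqref{eq-res} you get $\lambda_{k+1}\Delta_k\le M_f+M_{P_1}+M_{P_2}+L^k_f\|x^{k+1}-x^k\|$, but the available bound $\|x^{k+1}-x^k\|\le\frac{\mu_k}{L^k_g}(\Delta_k+M_GM_{\mathcal{B}})$ leaves, after dividing by $\Delta_k\ge\bar\eta$, the estimate $\lambda_{k+1}\le\frac{M_f+M_{P_1}+M_{P_2}}{\bar\eta}+\frac{M_L\mu_0}{\check{L}}+\frac{M_L\mu_0 M_GM_{\mathcal{B}}}{\check{L}\bar\eta}$. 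This is a valid boundedness statement, but the additive term $M_L\mu_0/\check{L}$ is not damped by any power of $\bar\eta$, so your constant neither equals nor is dominated by $M_\lambda$ in general (it is strictly larger when $M_GM_{\mathcal{B}}$ is small relative to $\bar\eta$); hence \eqref{eq-bd-lambda} as written is not proved. The paper obtains exactly $M_\lambda$ from a stronger consequence of subproblem optimality than the bare stationarity inclusion: it evaluates the strong-convexity inequality \eqref{eq-lambda-bd-hat x} at the ball center $x=\hat x^{k}$, where $g_{k,j_k}(\hat x^{k})=-\frac{L^k_g R_k^2}{2\mu_k}$, so $\lambda_{k+1}$ multiplies $R_k^2$ on the left while the right-hand side is $(M_f+M_{P_1}+M_{P_2})R_k+\frac{L^k_f}{2}\|\hat x^{k}-x^k\|^2$ with $\|\hat x^{k}-x^k\|\le\mu_k M_GM_{\mathcal{B}}/L^k_g$; dividing by $\mu_k^{-1}L^k_g R_k^2$ gives $\lambda_{k+1}\le\frac{M_f+M_{P_1}+M_{P_2}}{\Delta_k}+\frac{L^k_f M_G^2M_{\mathcal{B}}^2\mu_k}{2L^k_g\Delta_k^2}\le M_\lambda$. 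To prove the theorem as stated, replace your stationarity step with this evaluation at $\hat x^{k}$; your lower bound on $\Delta_k$, once repaired as above, then plugs in directly.
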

	\begin{proof}
		Let $k\in \mathbb{N}_0$.
		For notational simplicity, let $\hat{x}^{k} =  \hat{x}^{k,j_{k}}$ and $R_{k}= R_{k,j_k}$ be defined in
		\eqref{eq-def-center} and \eqref{eq-def-radius}, respectively.
		If $\|x^{k+1} - \hat{x}^{k}\|<R_{k}$, then we have $g_{k,j_{k}}(x^{k+1})<0$ in view of \eqref{eq-sub-const-ball}. Hence,
		we can deduce from \eqref{eq-slack} that
		$\lambda_{k+1} = 0$.
		Consequently, \eqref{eq-bd-lambda} holds.
		
		Now, suppose that $\|x^{k+1} - \hat{x}^{k}\|=R_{k}$.
		Let $M_{R}$ and $\hat \Omega_{0}$ be defined as in Lemma~\ref{lm-g-mu-grad-Lip}.
		Note from \eqref{eq-def-center} that
		$$
		\|\hat{x}^{k} - x^{k}\| = \mu_{k} (L^{k}_{g})^{-1} \| \nabla g_{\mu_{k}}
		(x^{k})\|
		\le \mu_{0} \check{L}^{-1} M_{G} M_{\mathcal{B}} \le M_{R},
		$$
where the first inequality follows from \eqref{eq-bd-MLf}, \eqref{eq-bd-nabla g_mu}, and the fact that $x^k\in \Omega_0$ (see Lemma~\ref{lm-well-defined}(i)). The above display shows that $\hat{x}^{k} \in B(x^{k}, M_R)\subseteq \Omega_{0} + B(0, M_R) = \hat \Omega_{0}$ since $x^{k} \in \Omega_{0}$.
		Next, using the convexity of $P_1$ and the definition of $M_{P_1}$ in \eqref{eq-def-MP12}, we have
		\begin{equation}\label{eq-P1-Lips}
			|P_1(\hat{x}^{k}) - P_1(x^{k+1})| \le M_{P_1} \| x^{k+1} - \hat{x}^{k}\|.
		\end{equation} 	
		Moreover, using \eqref{eq-sub-const-ball} and $\|x^{k+1} - \hat{x}^{k}\| = R_{k}$, we have
		\begin{align}\label{eq-lambda-R-1}
				&\frac{\lambda_{k+1} L_{g}^{k} R_{k}^2 }{\mu_{k}}
                = \frac{\lambda_{k+1} L_{g}^{k} R_{k}^2 }{2\mu_{k}} + \frac{\lambda_{k+1} L_{g}^{k} R_{k}^2 }{2\mu_{k}}\notag\\
				&= - \lambda_{k+1}g_{k,j_{k}}(\hat{x}^{k})
				+ \frac{\lambda_{k+1} L_{g}^{k}}{2\mu_{k}} \|x^{k+1} - \hat{x}^{k}\|^2\notag
				\\& \overset{\mathrm{(a)}}{\le} P_1(\hat{x}^{k}) - P_1(x^{k+1})
				+ \langle \nabla f(x^{k}) - \xi^{k}, \hat x^{k} - x^{k+1}\rangle
				+ \frac{L_{f}^{k}}{2} \| \hat{x}^{k} - x^{k}\|^2\notag
				\\& \overset{\mathrm{(b)}}{\le} (M_{P_1} + \|\nabla f(x^{k})\| + \|\xi^{k}\| )\|x^{k+1}
				- \hat x^{k}\|
				+ \frac{L_{f}^{k}}{2} \| \hat{x}^{k} - x^{k}\|^2\notag
				\\ & \overset{\mathrm{(c)}}{\le}  (M_{f} + M_{P_1} + M_{P_2})R_{k}
				+ \frac{L_f^{k} M_{G}^2M_{\mathcal{B}}^2\mu_{k}^2}{2(L^{k}_{g})^2},
		\end{align}
		where (a) follows from \eqref{eq-lambda-bd-hat x} with $i=i_k$, $j=j_k$ and $x = \hat{x}^{k}$, (b) holds because of \eqref{eq-P1-Lips}, and (c) uses \eqref{eq-def-Mf}, \eqref{eq-def-MP12} and the following relation:
		\begin{equation*}
			\|\hat x^{k} - x^{k}\| = \mu_{k}(L^{k}_{g})^{-1}\|\nabla g_{\mu_{k}}(x^{k})\|
			\le \mu_{k}M_{G}M_{\mathcal{B}}(L^{k}_{g})^{-1};
		\end{equation*}
		here, the equality follows from \eqref{eq-def-center} and the inequality follows
		from \eqref{eq-bd-nabla g_mu}.
		Rearranging the terms in \eqref{eq-lambda-R-1}
		and writing $\Delta_{k} \coloneqq  \mu_{k}^{-1}L_{g}^{k}R_{k}$,
		we get
		\begin{equation}\label{eq-lambda-bd-k}
			\begin{split}
				\lambda_{k+1}
				& \!\le\! \frac{M_f \!+\! M_{P_1}\!+\!M_{P_2}}{\Delta_{k}}
				\!+\! \frac{M_{G}^2M_{\mathcal{B}}^2 L_f^{k}\mu_{k}}{2L^{k}_{g}\Delta_{k}^2}
 \!\le\! \frac{M_f \!+\! M_{P_1}\!+\!M_{P_2}}{\Delta_{k}}
				\!+\! \frac{M_{L} M_{G}^2M_{\mathcal{B}}^2
					\mu_{0}}{2\check{L} \Delta_{k}^2},
			\end{split}
		\end{equation}
		where we use \eqref{eq-bd-MLf} and $\mu_{k}\le\mu_{0}$ to get
		the last inequality.
		
		Finally, we claim that $\Delta_{k}\ge \bar \eta$ for any $k\in \mathbb{N}_{0}$,
		where $\bar \eta$ is given by \eqref{eq-def-eta-R}.
		Granting this, the desired inequality \eqref{eq-bd-lambda} will then follow from
		\eqref{eq-lambda-bd-k}.
		To prove $\Delta_k \ge \bar\eta$, notice from \eqref{eq-def-radius} and the definition of $\Delta_{k}$ that
		\begin{equation}\label{eq-R ge 0}
			\Delta_{k}^2 =  \| \nabla g_{\mu_{k}}(x^{k})\|^2  -
			2\mu_{k}^{-1}L_{g}^{k}g_{\mu_{k}}(x^{k})\quad \forall\, k\in \mathbb{N}_{0}.
		\end{equation}
		Let $k\in\mathbb{N}_{0}$ and $u^{k} =  \nabla h_{\mu_{k}}(G(x^{k}))$.
		Since $h_{\mu_{k}}$ is convex, we have
		\begin{equation}\label{eq-sla-bd-k}
			- \langle u^{k},\, G(x^{k})\rangle
			\le h_{\mu_{k}}(0) - h_{\mu_{k}}(G(x^{k}))
			\le \alpha_{3} \mu_{k} - g_{\mu_{k}}(x^{k}),
		\end{equation}
		where we use $h_{\mu_{k}}(0) \le \sigma_{\mathcal{B}}(0) + \alpha_{3}\mu_{k} =
		\alpha_{3}\mu_{k}$ (see \eqref{eq-sm-appr-bd}) to get the last inequality.

		If $\alpha_{3} \mu_{k} - g_{\mu_{k}}(x^{k}) \le \eta$, then we have $- \langle
		u^{k},\, G(x^{k})\rangle \le \eta$ from \eqref{eq-sla-bd-k}.
		In addition, since $u^{k} \in {\changescolor \partial_{\alpha_{3} \mu_{k}} \sigma_{\mathcal B} (G(x^{k}))} \subseteq \mathcal{B}$ thanks to Proposition~\ref{prop-eps-subdiff} {\changescolor and \eqref{eq-sig-subdiff}}, we conclude in view of Lemma~\ref{lm-mfcq-region} that $\|DG(x^{k})^*u^{k}\|^2 >\eta$.
		Now we can deduce from \eqref{eq-R ge 0} and $g_{\mu_{k}}(x^{k})< 0$
		that
		\begin{equation}\label{eq-R-lb-1}
			\Delta_{k}^2 > \| \nabla g_{\mu_{k}}(x^{k})\|^2 = \|DG(x^{k})^*u^{k}\|^2>\eta.
		\end{equation}
		Next, we consider the case
		\begin{equation}\label{eq-no-cq}
			\alpha_{3} \mu_{k} - g_{\mu_{k}}(x^{k}) > \eta.			
		\end{equation}
		From \eqref{eq-R ge 0} and \eqref{eq-bd-MLf}, we see that
		\begin{equation}\label{eq-mu-R-lb}
			\begin{split}
				\Delta_{k}^2& \ge - 2\mu_{k}^{-1}L_{g}^{k} g_{\mu_{k}}(x^{k})
				\ge - 2\check{L} \mu_{k}^{-1} g_{\mu_{k}}(x^{k}).
			\end{split}
		\end{equation}
		On the one hand, if $\mu_{k} < \frac{\eta}{2\alpha_{3}}$, then the above
		inequality and \eqref{eq-no-cq} give
		\begin{equation}\label{eq-R-lb-2}
			\Delta_{k}^2
			\ge 2\check{L} ( \eta/\mu_{k} - \alpha_{3}) > 2\alpha_{3}\check{L}.
		\end{equation}		
		On the other hand, if $k \ge 1$ and $\mu_{k} \ge \frac{\eta}{2\alpha_{3}}$, then we have $k\le \bar t$ because $\{\mu_k\}$ is decreasing. Hence,
		from \eqref{eq-mu-R-lb}, we have
		\begin{align}\label{eq-R-lb-mu}
				\Delta_{k}^2 & \ge - 2\check{L}\mu_{k}^{-1} g_{\mu_{k}}(x^{k})
				\overset{\mathrm{(a)}}{\ge} - 2\check{L}\mu_{k}^{-1} \left(  g_{\mu_{k-1}}(x^{k}) - \alpha_{4} (\mu_{k-1} - \mu_{k})   \right)
				\notag\\ & \overset{\rm (b)}\ge 2\alpha_{4}\check{L}\mu_{k}^{-1}(\mu_{k-1} - \mu_{k})
				\overset{\mathrm{(c)}}{\ge} 2 c_0\alpha_{4} \check{L},
		\end{align}
		where (a) uses \eqref{eq-gmu-str-des}, (b) uses \eqref{eq-f-des} and (c) uses \eqref{c0def}.
		Finally, letting $k=0$ in \eqref{eq-mu-R-lb}, it holds that
		$$
		\Delta_{0}^2 \ge -2\mu_{0}^{-1}\check{L}g_{\mu_{0}}(x^{0}).
		$$
		Consequently, combining \eqref{eq-R-lb-1}, \eqref{eq-R-lb-2}, \eqref{eq-R-lb-mu}
		and the above display
		with the definition of $\bar \eta$, we conclude $\Delta_{k}\ge \bar \eta$ for
		all $k\in \mathbb{N}_{0}$. 
	\end{proof}
	
	{\changescolor	
		\begin{theorem}[Complexity bound in nonconvex setting]
			\label{th-complexity}
			Consider \eqref{opt-conic-dc} and \eqref{opt-supp-dc}, and suppose that Assumptions~\ref{ass-gen},
			\ref{ass-sm} and \ref{ass-muk} hold. Let $\{x^k\}$ and $\{\lambda_{k+1}\}$
			be generated by Algorithm~\ref{alg-SMBA}.
			Let $M_{\mathcal{B}}$, $M_{L}$ and $M_{\lambda}$ be given in \eqref{eq-M-B-def}, \eqref{eq-bd-MLg} and \eqref{eq-bd-lambda}, respectively.
			Then $G(x^k)\in {\cal K}$ for all $k\in \mathbb{N}_0$ and
			\begin{equation}\label{eq-vk-def}
				v^{k+1} \coloneqq  \lambda_{k+1} \nabla h_{\mu_{k}}(G(x^{k+1})) \in\mathcal{K}^{\circ}\quad \forall\, k\in \mathbb{N}_0.
			\end{equation}
			Moreover, for any $K\in \mathbb{N}_0$, there exists an integer $\hat{k} \in [\lceil K/2\rceil, K]$ such that
			\begin{align}
				& \rho_{{\hat k}+1}
				\le 
				\sqrt{{\bar M}_{1}(\psi(x^{0}) - \psi^*)}S_{K}^{-\frac{1}{2}}, \label{eq-complex-res}
				\\& \|x^{{\hat k}+1} - x^{{\hat k}}\|
				\le  
				\sqrt{2 (\psi(x^{0}) - \psi^*)/\tau_1} \mu_{\lceil K/2\rceil}^{\frac{1}{2}} S_{K}^{-\frac{1}{2}}, \label{eq-complex-suc}
				\\ & - \langle G(x^{{\hat k}+1}), v^{{\hat k}+1}\rangle
				\le \alpha_{3} M_{\lambda} \mu_{\lceil K/2\rceil} 
				+  {\bar M}_2 (\psi(x^{0}) - \psi^*) \mu_{\lceil K/2\rceil} S_{K}^{-1}, \label{eq-complex-slack}
			\end{align}
			where $\psi^*$ is the optimal value of \eqref{opt-conic-dc},
			\begin{align}
				& \rho_{k}\! \coloneqq\! \mathrm{dist} \left(0, \partial P_1(x^{k})\! -\! \partial P_2(x^{k-1})\! +\! \nabla f(x^{k})
				\! + \!
				DG(x^{k})^*v^{k}\right) \ \ \ \forall k\in \mathbb{N},
				\label{eq-rho-def}
				\\ &{\bar M}_1 \coloneqq 16 M_{L}^2 \max\left\{ \mu_{0}/\tau_1,
				M_{\lambda}/\tau_2 \right\} 
				\quad \mbox{and} \quad 
				{\bar M}_2 \coloneqq 
				(M_{L} + \mu_{0}L_{G}M_{\mathcal{B}} ) /\tau_2. \label{eq-def-M-bar}
			\end{align}
		\end{theorem}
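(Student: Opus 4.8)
The plan is to dispose of the two pointwise assertions first, then prove the three estimates by establishing, for every iteration $k$, three \emph{per-step} inequalities that all share the right-hand side $\psi(x^k)-\psi(x^{k+1})$, and finally extracting a \emph{single} index $\hat k$ by a $\mu_k$-weighted averaging argument. Throughout I write $D_k\coloneqq\psi(x^k)-\psi(x^{k+1})\ge 0$, where nonnegativity is the first consequence of \eqref{eq-f-des}. The feasibility $G(x^k)\in\mathcal K$ is immediate from Lemma~\ref{lm-well-defined}(i), which gives $x^k\in\Omega_0$ with $G(x^k)\in\mathrm{int}(\mathcal K)\subseteq\mathcal K$. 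For \eqref{eq-vk-def}, I would combine Proposition~\ref{prop-eps-subdiff} with \eqref{eq-sig-subdiff} to get $\nabla h_{\mu_k}(G(x^{k+1}))\in\partial_{\alpha_3\mu_k}\sigma_{\mathcal B}(G(x^{k+1}))\subseteq\mathcal B\subseteq\mathcal K^{\circ}$; since $\lambda_{k+1}\ge 0$ and $\mathcal K^{\circ}$ is a cone, $v^{k+1}\in\mathcal K^{\circ}$. Telescoping $D_k$ over $k\in[\lceil K/2\rceil,K]$ and using $\psi(x^{K+1})\ge\psi^*$ together with the monotonicity $\psi(x^{\lceil K/2\rceil})\le\psi(x^0)$ yields $\sum_{k=\lceil K/2\rceil}^{K}D_k\le\psi(x^0)-\psi^*$.

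The technical heart is the stationarity residual \eqref{eq-rho-def}. I would take $\zeta^{k+1}\in\partial P_1(x^{k+1})$ witnessing the inclusion \eqref{eq-res} (with $i=i_k$, $j=j_k$) and the \emph{same} $\xi^k\in\partial P_2(x^k)$ fixed in Step~2, and form the explicit element $r=(\nabla f(x^{k+1})-\nabla f(x^k))+\lambda_{k+1}(\nabla g_{\mu_k}(x^{k+1})-\nabla g_{\mu_k}(x^k))-\frac{1}{\mu_k}(\mu_k L_f^k+\lambda_{k+1}L_g^k)(x^{k+1}-x^k)$ of the set in \eqref{eq-rho-def}; here the identity $DG(x^{k+1})^{*}v^{k+1}=\lambda_{k+1}\nabla g_{\mu_k}(x^{k+1})$ is used to rewrite $DG(x^{k+1})^{*}v^{k+1}-\lambda_{k+1}\nabla g_{\mu_k}(x^k)$. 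Then $\rho_{k+1}\le\|r\|\le A_k\|x^{k+1}-x^k\|$ with $A_k=L_f+L_f^k+\lambda_{k+1}L_{\mu_k}+\lambda_{k+1}L_g^k/\mu_k$, using $L_f$-smoothness of $f$ and the $L_{\mu_k}$-smoothness of $g_{\mu_k}$ on $\mathrm{conv}(\hat\Omega_0)$ from Lemma~\ref{lm-g-mu-grad-Lip} (both $x^k,x^{k+1}\in\Omega_0\subseteq\hat\Omega_0$). The crucial estimate is $\mu_k L_{\mu_k}\le\mu_0 L_{\mu_0}\le M_L$, obtained from \eqref{eq-L-mu}, the monotonicity of $\{\mu_k\}$, and \eqref{eq-bd-MLg}; together with $L_f,L_f^k,L_g^k\le M_L$ this gives $A_k\le 2M_L(1+\lambda_{k+1}/\mu_k)$, hence $A_k^2\le 8M_L^2+8M_L^2\lambda_{k+1}^2/\mu_k^2$. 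A two-term comparison using $\mu_k\le\mu_0$ and $\lambda_{k+1}\le M_\lambda$ from \eqref{eq-bd-lambda} then yields $A_k^2\le\bar M_1\frac{\tau_1\mu_k+\tau_2\lambda_{k+1}}{2\mu_k^2}$ with $\bar M_1$ in \eqref{eq-def-M-bar}. Combining with the descent \eqref{eq-f-des} in the form $\frac{\tau_1\mu_k+\tau_2\lambda_{k+1}}{2\mu_k}\|x^{k+1}-x^k\|^2\le D_k$ produces the per-step bound $\rho_{k+1}^2\le\bar M_1\,D_k/\mu_k$.

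For the complementarity slack, if $\lambda_{k+1}=0$ then $v^{k+1}=0$ and the bound holds trivially; if $\lambda_{k+1}>0$ then \eqref{eq-slack} forces $g_{k,j_k}(x^{k+1})=0$, and the lower quadratic bound \eqref{eq-low-quad-bd} gives $-g_{\mu_k}(x^{k+1})\le(\frac{L_g^k}{2\mu_k}+\frac{L_G M_{\mathcal B}}{2})\|x^{k+1}-x^k\|^2$. Since $\nabla h_{\mu_k}(G(x^{k+1}))\in\mathcal B$, the convexity of $h_{\mu_k}$ and \eqref{eq-sm-appr-bd} give (exactly as in the derivation of \eqref{eq-sla-bd-k}) the bound $-\langle G(x^{k+1}),v^{k+1}\rangle\le\lambda_{k+1}\alpha_3\mu_k+\lambda_{k+1}(-g_{\mu_k}(x^{k+1}))$. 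I would then absorb the quadratic term using the other consequence of \eqref{eq-f-des}, namely $\lambda_{k+1}\|x^{k+1}-x^k\|^2\le\frac{2\mu_k}{\tau_2}D_k$, together with $L_g^k\le M_L$ and $\mu_k\le\mu_0$, to arrive at the per-step slack bound $-\langle G(x^{k+1}),v^{k+1}\rangle\le\alpha_3 M_\lambda\mu_k+\bar M_2 D_k$. The step size obeys $\|x^{k+1}-x^k\|^2\le\frac{2}{\tau_1}D_k$, again by dropping the $\lambda_{k+1}$ term in \eqref{eq-f-des}.

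Finally I would choose $\hat k\in\mathop{\mathrm{argmin}}_{k\in[\lceil K/2\rceil,K]}D_k/\mu_k$. From $\big(\min_k D_k/\mu_k\big)S_K\le\sum_{k=\lceil K/2\rceil}^{K}(D_k/\mu_k)\mu_k=\sum_{k=\lceil K/2\rceil}^{K}D_k\le\psi(x^0)-\psi^*$, one gets $D_{\hat k}/\mu_{\hat k}\le(\psi(x^0)-\psi^*)/S_K$, and hence $D_{\hat k}\le\mu_{\hat k}(\psi(x^0)-\psi^*)/S_K\le\mu_{\lceil K/2\rceil}(\psi(x^0)-\psi^*)/S_K$ by monotonicity of $\{\mu_k\}$. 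Substituting these into the three per-step bounds of the preceding paragraphs delivers \eqref{eq-complex-res}, \eqref{eq-complex-suc} and \eqref{eq-complex-slack} simultaneously at the single index $\hat k$. The main obstacle is the second paragraph: controlling the $1/\mu_k$ blow-up inside $A_k$ so that the \emph{squared} residual is dominated by $\bar M_1 D_k/\mu_k$ with the precise constant, and recognizing that the weighted index $\mathop{\mathrm{argmin}}_k D_k/\mu_k$ is the common index making all three estimates hold at once; the slack and step bounds are comparatively routine once the decomposition of $D_k$ via the two halves of \eqref{eq-f-des} is in place.
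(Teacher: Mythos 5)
Your proposal is correct and follows essentially the same route as the paper's proof: the same explicit residual element built from \eqref{eq-res}, the same constant bookkeeping via $\mu_k L_{\mu_k}\le\mu_0 L_{\mu_0}\le M_L$ and $\mu_k\le\mu_0$, $\lambda_{k+1}\le M_\lambda$, the same convexity-plus-\eqref{eq-sm-appr-bd} argument for the complementarity slack, and the same weighted-minimum index selection (your $\mathop{\mathrm{argmin}}_k D_k/\mu_k$ is just the paper's $\mathop{\mathrm{argmin}}_k\omega_k$ in different bookkeeping, since the paper's $\omega_k$ satisfies $\mu_k\omega_k\le 2D_k$). The only cosmetic deviations are your case split on $\lambda_{k+1}=0$ versus the paper carrying the multiplier through complementary slackness, and phrasing the per-step bounds in terms of $D_k$ rather than $\omega_k$; both yield the stated constants.
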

        }
		\begin{proof}
			From Proposition~\ref{prop-eps-subdiff} and \eqref{eq-sig-subdiff}, we see that
			$$
			\nabla h_{\mu_{k}}(G(x^{k+1})) {\changescolor\in}\partial_{\alpha_{3} \mu_{k}} \sigma_{\mathcal{B}}(G(x^{k+1})) \subseteq \mathcal{B}\subseteq  \mathcal{K}^{\circ} \quad
			\forall\, k \in \mathbb{N}_0.
			$$
			This proves \eqref{eq-vk-def} as $\lambda_{k+1}\ge 0$. The fact $G(x^k)\in {\cal K}$ was proved in Lemma~\ref{lm-well-defined}(i).

            {\changescolor
			Now, for every $k\in \mathbb{N}_0$, 
			we will bound $\rho_{k+1}$, $\|x^{k+1} - x^{k}\|$
			and $- \langle G(x^{k+1}), v^{k+1}\rangle$ 
			in terms of $\mu_k$ and $\omega_{k}$, where
			\begin{equation}\label{eq-omega-def}
				\omega_k \coloneqq \frac{\tau_1\mu_{k} + \lambda_{k+1} \tau_2}
				{\mu_{k}^2}
				\|x^{k+1} - x^{k}\|^2 \quad \forall k \in \mathbb{N}_{0}.
			\end{equation}
			To this end, by setting $i=i_k$ and $j=j_k$ in \eqref{eq-res}, we have
			\begin{align*}
				& \!-\! \mu_k^{-1}\widetilde L_{k}(x^{k+1}\!-\!x^{k})
				\!+\! \nabla f(x^{k+1}) \!-\! \nabla f(x^{k}) \!+\! \lambda_{k+1} \left(\nabla g_{\mu_{k}}(x^{k+1}) \!-\! \nabla
				g_{\mu_{k}}(x^{k})\right)
				\\ & \in \partial P_1(x^{k+1}) - \xi^{k} + \nabla f(x^{k+1})
				+ \lambda_{k+1} \nabla g_{\mu_{k}}(x^{k+1})
				\\ & \subseteq \partial P_1(x^{k+1}) - \partial P_2(x^{k})
				+ \nabla f(x^{k+1})
				+ \lambda_{k+1} DG(x^{k+1})^* \nabla h_{\mu_{k}}(G(x^{k+1})),
			\end{align*}
			where $\widetilde L_{k} \coloneqq  \mu_{k} L_{f}^{k} + \lambda_{k+1} L_{g}^{k}.$
			Combining this with \eqref{eq-vk-def} and \eqref{eq-rho-def}, we have
			\begin{align*}
				& \rho_{k+1}
			 \le \mu_k^{-1}{\widetilde L_{k}}\|x^{k+1} \!- \! x^{k}\|
			\!	+ \! \|\nabla f(x^{k+1}) \!-\! \nabla f(x^{k})\|\! +\! \lambda_{k+1} \| \nabla g_{\mu_{k}}(x^{k+1}) \!- \! \nabla g_{\mu_{k}}(x^{k})\|
				\\ & \le  \frac{ \mu_{k} (L_f^{k} + L_f) \!+\! \lambda_{k+1} (L_g^{k} + \mu_{k} L_{\mu_{k}})}{\mu_{k}}\|x^{k+1} \!- x^{k}\|
				\le  \frac{2\mu_{k}M_{L} \!+\! 2\lambda_{k+1}M_{L}}{\mu_{k}}\|x^{k+1}\! - x^{k}\|,
			\end{align*}
			where the second inequality follows from the $L_f$-smoothness of $f$ and Lemma~\ref{lm-g-mu-grad-Lip}, and the last inequality holds thanks to \eqref{eq-bd-MLf} (note that $\hat{L} \ge \check{L}$) and the observation that $\mu_k L_{\mu_k}\le \mu_0 L_{\mu_0}$.
			This further yields
			\begin{align}
				\rho_{k+1}^2 
				& \le  \frac{4 M_{L}^2(\mu_{k} +
					\lambda_{k+1})^2}{\mu_{k}^2}
				\|x^{k+1} - x^{k}\|^2 \notag
				\\ &  \overset{\mathrm{(a)}}{=} \frac{ 4 M_{L}^2 (\mu_{k} 
					+ \lambda_{k+1})^2}{\tau_1\mu_{k} + \lambda_{k+1} \tau_2} 
				\omega_k
				\overset{\mathrm{(b)}}{\le}  \frac{8M_{L}^2 (\mu_{k}^2 + \lambda_{k+1}^2)}{\tau_1\mu_{k} +
					\lambda_{k+1} \tau_2} \omega_k 
				\notag\\ & \overset{\mathrm{(c)}}{\le} 8 M_{L}^2 \max\left\{\mu_{k}/\tau_1 ,
				\lambda_{k+1}/\tau_2 \right\} \omega_k
				\overset{\mathrm{(d)}}{\le} 8M_{L}^2 \max\left\{ \mu_{0}/\tau_1,
				M_{\lambda}/\tau_2 \right\}\omega_k,
				\label{eq-res-bd-k}
			\end{align}
			where (a) uses the definition of $\omega_{k}$ in \eqref{eq-omega-def}, (b) follows from the elementary relation $(a+b)^2 \le 2a^2 + 2b^2$ for $a,b\in \R$,
			(c) holds since $t a + (1-t)b \le
			\max\{a, b\}$ for $t\in [0, 1]$ and $a,b\in \mathbb{R}$,
			and (d) is deduced from \eqref{eq-bd-lambda} and the fact that $\mu_k\le \mu_0$.
			
			On the other hand, from \eqref{eq-low-quad-bd}, it follows that
				\begin{align}
				 \lambda_{k+1} g_{\mu_{k}}(x^{k+1}) 
				& \ge  \lambda_{k+1} \left(g_{\mu_{k}}(x^{k}) + \langle \nabla g_{\mu_{k}}(x^{k}),\, x^{k+1} - x^k \rangle
				- \frac{ L_{G}M_{\mathcal{B}}}{2} \|x^{k+1} - x^{k}\|^2 \right) \notag
				 \\ & = \lambda_{k+1} g_{k, j_{k}}(x^{k+1}) 
				 - \frac{ \lambda_{k+1} (L_{g}^{k} + L_{G}M_{\mathcal{B}}\mu_k )}{2 \mu_{k}} \|x^{k+1} - x^{k}\|^2 \notag
				 \\ & = - \frac{\lambda_{k+1}\left(L_{g}^{k} +
				 	\mu_{k}L_{G}M_{\mathcal{B}}\right)}{2(\tau_1\mu_{k} + \lambda_{k+1} \tau_2)} \mu_k\omega_k
				 \ge - \frac{\left(M_{L} +
				 	\mu_{0}L_{G}M_{\mathcal{B}}\right)}{2\tau_2} \mu_k \omega_k, \label{eq-lambda-g-muk}
			\end{align}
			where the first equality holds thanks to the definition of $g_{i,j}$ in \eqref{opt-sub}, 
			the second equality holds because of the complementary slackness condition in \eqref{eq-slack}
			and the definition of $\omega_{k}$ in \eqref{eq-omega-def},
			and the last inequality uses \eqref{eq-bd-MLf}, $\mu_{k}\le\mu_{0}$ and the fact
			$\lambda_{k+1}/(\tau_1\mu_{k} + \tau_2 \lambda_{k+1}) \le \tau_2^{-1}$.
			Now,
			\begin{align}
				& - \langle G(x^{k+1}), v^{k+1}\rangle  = -  \lambda_{k+1}\langle
				G(x^{k+1}), \nabla h_{\mu_{k}}(G(x^{k+1})) \rangle \nonumber
				\\ &\overset{\mathrm{(a)}}{\le} \lambda_{k+1} \left( h_{\mu_k}(0) - h_{\mu_{k}}(G(x^{k+1}) \right)
				\overset{\mathrm{(b)}}{\le} \lambda_{k+1} \left( \alpha_{3}\mu_{k} - g_{\mu_{k}}(x^{k+1})\right) \notag
				\\ & \overset{\mathrm{(c)}}{\le}
				\alpha_{3}M_{\lambda}\mu_{k}
				+ \frac{\left(M_{L} +	\mu_{0}L_{G}M_{\mathcal{B}}\right)}
				{2\tau_2} \mu_k \omega_k,\label{eq-sla-comple-bd}
			\end{align}
			where (a) uses the convexity of $h_{\mu_{k}}$, (b) uses \eqref{eq-sm-appr-bd}, (c) uses \eqref{eq-lambda-g-muk} and \eqref{eq-bd-lambda}.
			
			Furthermore, from the definition of $\omega_k$ in \eqref{eq-omega-def}, it holds that
			\begin{equation}\label{eq-x-bd}
				\|x^{k+1} - x^{k}\|^2 = \frac{\mu_{k}^2\omega_k}{\tau_1\mu_{k} 
					+ \lambda_{k+1} \tau_2}
				\le \frac{\mu_{k}\omega_k}{\tau_1}.
			\end{equation}
			
			Finally, from \eqref{eq-f-des} and the definition of $\omega_{k}$ in \eqref{eq-omega-def}, it follows that
			\begin{equation*}
				\mu_{k} \omega_{k} \le 2 (\psi(x^{k}) - \psi(x^{k+1})) \quad \forall k\in \mathbb{N}_{0}.
			\end{equation*}
			Summing the last inequality from $\lceil K/2\rceil$ to $K$
			and rearranging terms, we have for some $\hat{k} \in [\lceil K/2\rceil, K]$ that
			\begin{equation*}
				\omega_{\hat{k}} = \min_{\lceil K/2\rceil\le k \le K} \omega_{k} 
				\le \frac{2(\psi(x^{\lceil K/2\rceil}) - \psi(x^{K+1}))}{\sum_{k=\lceil K/2\rceil}^{K} \mu_{k}}
				\le 2(\psi(x^{0}) - \psi^*) S_{K}^{-1},
			\end{equation*}
			where the last inequality holds since $\{\psi(x^{k})\}$ is nonincreasing (see \eqref{eq-f-des}).
			Combining this with $\mu_{\hat{k}} \le \mu_{\lceil K/2\rceil}$, \eqref{eq-res-bd-k}, \eqref{eq-sla-comple-bd} and \eqref{eq-x-bd}, we conclude the theorem.
            }
		\end{proof}
	
	{\changescolor	
	\begin{corollary}\label{corol-complex-1}
		Consider \eqref{opt-conic-dc} and \eqref{opt-supp-dc}, and suppose that Assumptions~\ref{ass-gen},
		\ref{ass-sm} and \ref{ass-muk} hold. Let $\{x^k\}$
		be generated by Algorithm~\ref{alg-SMBA}.
		Let $\epsilon_1>0$ and $\epsilon_2>0$. Suppose that $K\in \mathbb{N}_0$ satisfies
		\begin{equation}\label{eq-S-mu-K-cond}
			S_{K} \ge \bar{M}_1 (\psi(x^{0}) - \psi^*) \epsilon_1^{-2}
			\ \  \mbox{and} \ \ 
			\mu_{\lceil K/2\rceil} \le 
			\min \left\{  \frac{{\bar M}_1}{\alpha_{3} {\bar M}_1 M_{\lambda}  
			\! + \!  {\bar M}_2  \epsilon_1^2}, \frac{ \tau_1 {\bar M}_1} { 2 }  \right\} \epsilon_2,
		\end{equation}
		where $\psi^*$ is the optimal value of \eqref{opt-conic-dc}, $M_{\lambda}$ is given by \eqref{eq-bd-lambda},
		and $\bar{M}_1,\bar{M}_2$ are given by \eqref{eq-def-M-bar}.
		Then there exists an integer $\hat{k} \in [\lceil K/2 \rceil, K]$ such that
		$x^{\hat{k}+1}$ is an $(\epsilon_1, \epsilon_2, \epsilon_1 \sqrt{\epsilon_2})$-KKT point of \eqref{opt-conic-dc}.
	\end{corollary}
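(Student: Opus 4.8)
The plan is to invoke Theorem~\ref{th-complexity} and then verify the three defining inequalities of Definition~\ref{def-epsilon-KKT} for the point $x = x^{\hat k+1}$, using the auxiliary point $z = x^{\hat k}$ and the multiplier $v = v^{\hat k+1}$. Under the present hypotheses Theorem~\ref{th-complexity} applies and, for the given $K$, yields an index $\hat k\in[\lceil K/2\rceil, K]$ together with the three estimates \eqref{eq-complex-res}--\eqref{eq-complex-slack}. The membership requirements are immediate: $G(x^k)\in{\cal K}$ for all $k$ gives $x^{\hat k+1}, x^{\hat k}\in G^{-1}({\cal K})$, while \eqref{eq-vk-def} gives $v^{\hat k+1}\in{\cal K}^\circ$. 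Moreover, by \eqref{eq-rho-def} the quantity $\rho_{\hat k+1}$ is exactly the stationarity residual $\mathrm{dist}(0, \partial P_1(x^{\hat k+1}) - \partial P_2(x^{\hat k}) + \nabla f(x^{\hat k+1}) + DG(x^{\hat k+1})^* v^{\hat k+1})$ appearing in Definition~\ref{def-epsilon-KKT} with this choice of $z$; this is precisely why the subdifferential of $P_2$ is evaluated at the previous iterate.

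The core of the argument is to substitute the two conditions in \eqref{eq-S-mu-K-cond} into the three estimates. First I would rewrite the lower bound on $S_K$ as $S_K^{-1/2}\le \epsilon_1/\sqrt{\bar M_1(\psi(x^0)-\psi^*)}$ and $S_K^{-1}\le \epsilon_1^2/(\bar M_1(\psi(x^0)-\psi^*))$. Plugging the first into \eqref{eq-complex-res} immediately gives $\rho_{\hat k+1}\le\epsilon_1$, after the factor $\sqrt{\bar M_1(\psi(x^0)-\psi^*)}$ cancels. For the step length, combining the same estimate for $S_K^{-1/2}$ with \eqref{eq-complex-suc} yields $\|x^{\hat k+1}-x^{\hat k}\|\le \sqrt{2/(\tau_1\bar M_1)}\,\mu_{\lceil K/2\rceil}^{1/2}\epsilon_1$; invoking the second term of the minimum in \eqref{eq-S-mu-K-cond}, namely $\mu_{\lceil K/2\rceil}\le \frac{\tau_1\bar M_1}{2}\epsilon_2$, reduces this to exactly $\epsilon_1\sqrt{\epsilon_2}$, which is the third required inequality.

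For the complementary slackness inequality I would start from \eqref{eq-complex-slack} and bound its second summand using $S_K^{-1}\le\epsilon_1^2/(\bar M_1(\psi(x^0)-\psi^*))$, so that the right-hand side is at most $(\alpha_3 M_\lambda + \bar M_2\epsilon_1^2/\bar M_1)\mu_{\lceil K/2\rceil} = \frac{\alpha_3 M_\lambda\bar M_1 + \bar M_2\epsilon_1^2}{\bar M_1}\mu_{\lceil K/2\rceil}$. Invoking the first term of the minimum in \eqref{eq-S-mu-K-cond} then gives $-\langle v^{\hat k+1}, G(x^{\hat k+1})\rangle\le\epsilon_2$, the second required inequality. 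Collecting the three verified inequalities shows that $x^{\hat k+1}$ is an $(\epsilon_1,\epsilon_2,\epsilon_1\sqrt{\epsilon_2})$-KKT point.

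There is no deep obstacle here; the argument is essentially careful bookkeeping once Theorem~\ref{th-complexity} is in hand. The only point demanding attention is that the two terms in the minimum in \eqref{eq-S-mu-K-cond} are calibrated for distinct purposes: the second controls the step length and hence the third KKT tolerance, whereas the first controls the complementary slackness, which carries both an $O(\mu_{\lceil K/2\rceil})$ contribution and an $O(\mu_{\lceil K/2\rceil}S_K^{-1})$ contribution. The latter must first be converted into an $O(\mu_{\lceil K/2\rceil}\epsilon_1^2/\bar M_1)$ term via the lower bound on $S_K$ before the first term of the minimum can absorb it. One must therefore be sure to apply the $S_K$-lower bound in \eqref{eq-S-mu-K-cond} within both the stationarity and the slackness estimates before invoking the $\mu_{\lceil K/2\rceil}$-upper bound.
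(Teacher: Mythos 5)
Your proposal is correct and follows essentially the same route as the paper's proof: invoke Theorem~\ref{th-complexity} to obtain $\hat k$ with the three estimates \eqref{eq-complex-res}--\eqref{eq-complex-slack}, then substitute the $S_K$ lower bound (to absorb the $S_K^{-1/2}$ and $S_K^{-1}$ factors) before applying the two branches of the $\mu_{\lceil K/2\rceil}$ upper bound in \eqref{eq-S-mu-K-cond}, exactly as the paper does. Your additional remarks on the membership conditions ($x^{\hat k+1}, x^{\hat k}\in G^{-1}(\mathcal{K})$, $v^{\hat k+1}\in\mathcal{K}^\circ$, and $z=x^{\hat k}$ in Definition~\ref{def-epsilon-KKT}) merely make explicit what the paper leaves implicit via \eqref{eq-vk-def} and \eqref{eq-rho-def}.
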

	\begin{proof}
		By Theorem~\ref{th-complexity}, 
		there exists an integer $\hat{k} \in [\lceil K/2 \rceil, K]$ such that
		\eqref{eq-complex-res}, \eqref{eq-complex-suc} and \eqref{eq-complex-slack} hold.
		Using this fact and \eqref{eq-S-mu-K-cond},
		we have
		\begin{align*}
			& \rho_{{\hat k}+1}
			\le 
			\sqrt{{\bar M}_{1} (\psi(x^{0}) - \psi^*)}S_{K}^{-\frac{1}{2}} \le \epsilon_{1},
			\\ & \|x^{{\hat k}+1} - x^{{\hat k}}\|
			\le \sqrt{2/(\tau_1 {\bar M}_{1}) } \mu_{\lceil K/2\rceil}^{\frac{1}{2}} \epsilon_1 \le \epsilon_1 \sqrt{\epsilon_2},			
			\\ & - \langle G(x^{{\hat k}+1}), v^{{\hat k}+1}\rangle
			\le  (\alpha_{3} M_{\lambda}  
			+  {\bar M}_2 {\bar M}_1^{-1} \epsilon_1^2   ) \mu_{\lceil K/2\rceil} 
			\le \epsilon_2,
		\end{align*}
		where $\{v^{k}\}$ and $\{\rho_{k}\}$ are defined by \eqref{eq-vk-def} and \eqref{eq-rho-def}, respectively.
		This completes the proof.
	\end{proof}
	\begin{corollary}\label{cor-complex-ncov}
		Consider \eqref{opt-conic-dc} and \eqref{opt-supp-dc}, and suppose that Assumptions~\ref{ass-gen} and
		\ref{ass-sm} hold. 
		Let $r\in (0,1)$ and $\mu_0 > 0$, and let
		$
		\mu_{k} = \mu_{0} (k+1)^{-r}$ for all $k \in \mathbb{N}$.
		Let $\{x^k\}$
		be generated by Algorithm~\ref{alg-SMBA}.
		Let $\epsilon_1>0$ and $\epsilon_2>0$. Suppose that $K\in \mathbb{N}_0$ satisfies
		\begin{align*}
			 K \ge \max & \left\{   
			\left(\frac{\bar{M}_1 (\psi(x^{0}) - \psi^*)2^{2r+1}}{\mu_0 }\right)^{\frac{1}{1-r}} \epsilon_1^{-\frac{2}{1-r}}, \right.
			\\ &  
			\left. 2 \mu_0^{\frac{1}{r}} \min  \left\{  \frac{{\bar M}_1}{\alpha_{3} {\bar M}_1 M_{\lambda}  
				+  {\bar M}_2  \epsilon_1^2}, \frac{ \tau_1 {\bar M}_1} {2}  \right\}^{-\frac{1}{r}} \epsilon_2^{-\frac{1}{r}}
				\right\},
		\end{align*}
where $\psi^*$ is the optimal value of \eqref{opt-conic-dc}, $M_{\lambda}$ is given by \eqref{eq-bd-lambda},
and $\bar{M}_1,\bar{M}_2$ are given by \eqref{eq-def-M-bar}.
		Then there exists an integer $\hat{k} \in [\lceil K/2 \rceil, K]$ such that
		$x^{\hat{k}+1}$ is an $(\epsilon_1, \epsilon_2, \epsilon_1 \sqrt{\epsilon_2})$-KKT point of \eqref{opt-conic-dc}.
	\end{corollary}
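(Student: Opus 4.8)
The plan is to derive Corollary~\ref{cor-complex-ncov} as a direct instance of Corollary~\ref{corol-complex-1}. Concretely, I will first check that the prescribed sequence $\mu_k = \mu_0(k+1)^{-r}$ falls under Proposition~\ref{prop-muk}, so that Assumption~\ref{ass-muk} holds and the explicit lower bound \eqref{eq-sum-K-lb} on $S_K$ is available; I will then show that the two-part lower bound on $K$ in the statement forces both conditions in \eqref{eq-S-mu-K-cond}, after which Corollary~\ref{corol-complex-1} applies verbatim.

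First I would identify $\mu_k = \mu_0(k+1)^{-r}$ as the sequence generated by \eqref{eq-def-muk} under the choices $r_k \equiv r$ (hence $\bar r = r \in (0,1)$), $n_0 = 0$ and $\nu_0 = 1$: with $n_0 = 0$ the representation $k = k_2(n_0+1) + k_1$, $k_1 \le n_0$, forces $k_1 = 0$ and $k_2 = k$, so \eqref{eq-def-muk} collapses to $\mu_0(k+1)^{-r}$ (and this matches the base value at $k=0$ as well). Proposition~\ref{prop-muk} then certifies that $\{\mu_k\}$ satisfies Assumption~\ref{ass-muk} and supplies $S_K \ge \mu_0 2^{-(2r+1)} K^{1-r}$ for all $K \in \mathbb{N}_0$.

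Next I would verify \eqref{eq-S-mu-K-cond}. For the $S_K$ condition, substituting the bound above and requiring $\mu_0 2^{-(2r+1)}K^{1-r} \ge \bar M_1(\psi(x^0)-\psi^*)\epsilon_1^{-2}$, then solving for $K$ by raising to the power $1/(1-r)$, reproduces exactly the first entry of the $\max$. For the $\mu_{\lceil K/2\rceil}$ condition, I would use $\lceil K/2\rceil + 1 \ge K/2$ and the monotonicity of $t \mapsto t^{-r}$ to write $\mu_{\lceil K/2\rceil} = \mu_0(\lceil K/2\rceil + 1)^{-r} \le \mu_0(K/2)^{-r}$; requiring this last quantity to be at most $\min\{\cdots\}\,\epsilon_2$ and raising to the power $1/r$ yields the second entry of the $\max$. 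Since the hypothesized $K$ dominates both thresholds, both parts of \eqref{eq-S-mu-K-cond} hold, and Corollary~\ref{corol-complex-1} then delivers the desired $\hat k \in [\lceil K/2\rceil, K]$ together with the $(\epsilon_1,\epsilon_2,\epsilon_1\sqrt{\epsilon_2})$-KKT property of $x^{\hat k + 1}$.

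I do not anticipate a genuine obstacle here, as the argument is essentially bookkeeping layered on top of Corollary~\ref{corol-complex-1} and Proposition~\ref{prop-muk}. The only step needing mild care is converting the ceiling $\lceil K/2\rceil$ into a clean power of $K$: one should invoke $\lceil K/2\rceil + 1 \ge K/2$ to extract the factor $(K/2)^{-r}$, which is precisely what produces the leading factor of $2$ in the second threshold on $K$.
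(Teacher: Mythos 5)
Your proposal is correct and follows exactly the paper's route: the paper proves this corollary by citing Corollary~\ref{corol-complex-1} and Proposition~\ref{prop-muk}, and your write-up simply makes explicit the instantiation ($r_k\equiv r$, $n_0=0$, $\nu_0=1$) and the algebra converting the $K$-threshold into the two conditions of \eqref{eq-S-mu-K-cond}. The bookkeeping (using $S_K \ge \mu_0 2^{-(2r+1)}K^{1-r}$ for the first condition and $\mu_{\lceil K/2\rceil}\le \mu_0 (K/2)^{-r}$ for the second) checks out.
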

	\begin{proof}
	The conclusion follows directly from Corollary~\ref{corol-complex-1} and Proposition~\ref{prop-muk}.
	\end{proof}
	\begin{remark}
		\begin{enumerate}[{\rm (i)}]
			\item By choosing $r=\frac{1}{3}$ in Corollary~\ref{cor-complex-ncov}, 
			Algorithm~\ref{alg-SMBA} outputs 
			an $(\epsilon_1, \epsilon_1, \epsilon_1^{3/2})$-KKT point of \eqref{opt-conic-dc}
			within $O(\epsilon_1^{-3})$ outer iterations.
			Similarly, setting $r=\frac{1}{2}$ in Corollary~\ref{cor-complex-ncov} yields
			an $(\epsilon_1, \epsilon^2_1, \epsilon^2_1)$-KKT point of \eqref{opt-conic-dc}
			within $O(\epsilon_{1}^{-4})$ outer iterations.
			
			\item 
			When $\mathbb{X} = \mathbb{R}^{n}$, $\mathbb{Y}=\mathbb{R}^{m}$ and $\mathcal{K}=\mathbb{R}^{m}_{-}$ in \eqref{opt-conic-dc}, 
			the inexact MBA in \cite{25LPB} attains an $(\epsilon_1, \epsilon^2_1, \epsilon_1)$-KKT point within 
			$O\left(\varepsilon^{-2}_{1}\right)$ outer iterations. The total number of inner iterations for solving the subproblems, however, was not taken into account explicitly in their complexity analysis.
            \item When $\mathbb{X} = \mathbb{R}^{n}$, $\mathbb{Y}=\mathbb{R}^{m}$, $\mathcal{K}=\mathbb{R}^{m}_{-}$ and $P_2 = 0$ in \eqref{opt-conic-dc}, the inexact LCPG method in \cite{25BDL} with the constraint extrapolation method \cite{23BDL} as subproblem solver obtains 
            an $(\epsilon_1^2, \epsilon_1^2)$ type-II KKT point in the sense of \cite[Definition~4]{25BDL}\footnote{\changescolor
            An $(\epsilon_1^2, \epsilon_1^2)$ type-II KKT point $x$ must satisfy $\|x-w\| \le \epsilon_1$ for some $(\epsilon_1, \epsilon_1^2, 0)$-KKT point $w$. 
            } within $O(\epsilon_1^{-2})$ outer iterations (see \cite[Theorem~8, Corollary~5]{25BDL}), 
            and $O(\epsilon_1^{-4})$ (resp., $O(\epsilon_1^{-8})$) inner iterations with (resp., without) knowing a bound on the norm of a Lagrange multiplier (see \cite[Corollaries 4 and 5]{25BDL}).
            
			\item In contrast, although the outer iteration complexity of our proposed {\em s}MBA method for computing an $(\epsilon_1, \epsilon, \epsilon_1^{3/2})$-KKT point of \eqref{opt-conic-dc} is $O(\epsilon_1^{-3})$, each subproblem in {\em s}MBA involves only one single inequality constraint and can potentially be solved at a lower computational cost. We will further comment on this aspect in section~\ref{sec6}. However, we also need to point out that both our approach and the approaches in \cite{25BDL,25LPB} require one evaluation of $DG(x^k)$ per outer iteration, meaning that our approach potentially requires more number of access to $DG(\cdot)$, and may become less efficient if the cost of computing $DG(\cdot)$ is high. 
			\end{enumerate}		
	\end{remark}	
}

	\section{{\em s}MBA for convex optimization}\label{sec-conv}
In this section, we study the convergence properties of Algorithm~\ref{alg-SMBA} for convex instances of \eqref{opt-conic-dc}.
	\begin{assumption}\label{ass-conv}
		In \eqref{opt-conic-dc}, $P_2 = 0$, $f$ is convex, and the mapping $G$ is $(-\mathcal{K})$-convex.			
	\end{assumption}

	\begin{lemma}
		Consider \eqref{opt-conic-dc} and \eqref{opt-supp-dc}, and suppose that Assumptions~\ref{ass-gen},
		\ref{ass-sm}, \ref{ass-muk} and \ref{ass-conv} hold. Let $\{x^k\}$ be generated by Algorithm~\ref{alg-SMBA}.
		Let $M_{L}$ and $M_{\lambda}$ be given in \eqref{eq-bd-MLg} and \eqref{eq-bd-lambda}, respectively.
		Then, we have, for every $k \in \mathbb{N}_0$ and
		any $\tilde{x}\in \Omega^*$,
		\begin{equation}\label{eq-ball-prox-conv}
			\begin{split}
				& \frac{\mu_{k}}{M_1} \left( \psi(x^{k+1}) \!-\! \psi^*\right) \le \|x^{k} \!-\! \tilde x\|^2 \!-\! \|x^{k+1} \!-\! \tilde x\|^2 \!+\! M_2 \mu_{k}^2 \!+\! M_3 \left(\psi(x^{k}) \!-\! \psi(x^{k+1})\right),	
			\end{split}
		\end{equation}
		where  $\Omega^*$ and $\psi^*$ are the solution set and the optimal value of \eqref{opt-conic-dc}, respectively, and
		\begin{equation}\label{eq-M1-2-3}
 M_1 {\changescolor\, \coloneqq} M_{L}(\mu_{0} + M_{\lambda})/2,\ M_2 {\changescolor\, \coloneqq} 2 \alpha_{3} / \check{L},\
 M_3 {\changescolor\, \coloneqq} 2 \max\{0, L_f -\check{L}\}/(\tau_1 \check{L}).
		\end{equation}
	\end{lemma}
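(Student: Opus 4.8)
The plan is to start from the master estimate \eqref{eq-ball-prox} in Lemma~\ref{lm-well-defined}(ii), specialized to the terminating indices $i=i_k$, $j=j_k$ (so that $x^{k,i_k,j_k}=x^{k+1}$, $\lambda_{k,i_k,j_k}=\lambda_{k+1}$, $L^{k,i_k}_f=L^k_f$, $L^{k,j_k}_g=L^k_g$, and $\widetilde L_{k,i_k,j_k}=\widetilde L_k$ with $\widetilde L_k=\mu_k L^k_f+\lambda_{k+1}L^k_g$), and to evaluate it at $x=\tilde x$ for a fixed $\tilde x\in\Omega^*$. Under Assumption~\ref{ass-conv} we have $P_2=0$, hence $\xi^k=0$ and $P_2(x^k)=0$, so the right-hand side of \eqref{eq-ball-prox} reduces to the objective terms, the term $\lambda_{k+1}g_{k,j_k}(\tilde x)$, and the three quadratic terms.

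First I would dispatch the objective terms: convexity of $f$ gives $f(x^k)+\langle\nabla f(x^k),\tilde x-x^k\rangle\le f(\tilde x)$, whence $f(x^k)+P_1(\tilde x)+\langle\nabla f(x^k),\tilde x-x^k\rangle\le \psi(\tilde x)=\psi^*$. Next, since $G$ is $(-\mathcal K)$-convex, $g_{\mu_k}$ is convex by Proposition~\ref{prop-g-mu}(iv), so $g_{\mu_k}(x^k)+\langle\nabla g_{\mu_k}(x^k),\tilde x-x^k\rangle\le g_{\mu_k}(\tilde x)$; combining this with $g_{\mathcal B}(\tilde x)\le 0$ (feasibility of $\tilde x$) and the upper bound in \eqref{eq-gmu-appr} yields $g_{k,j_k}(\tilde x)\le \alpha_3\mu_k+\tfrac{L^k_g}{2\mu_k}\|\tilde x-x^k\|^2$, so that $\lambda_{k+1}g_{k,j_k}(\tilde x)\le \lambda_{k+1}\alpha_3\mu_k+\tfrac{\lambda_{k+1}L^k_g}{2\mu_k}\|\tilde x-x^k\|^2$ since $\lambda_{k+1}\ge0$. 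The crucial algebraic observation is that this quadratic coefficient merges with the $\tfrac{L^k_f}{2}\|\tilde x-x^k\|^2$ term into exactly $\tfrac{\widetilde L_k}{2\mu_k}\|\tilde x-x^k\|^2$, which pairs with the $-\tfrac{\widetilde L_k}{2\mu_k}\|\tilde x-x^{k+1}\|^2$ term to form the telescoping difference. At this point I will have established
\begin{equation*}
\psi(x^{k+1})-\psi^* \le \lambda_{k+1}\alpha_3\mu_k + \frac{\widetilde L_k}{2\mu_k}\left(\|x^k-\tilde x\|^2-\|x^{k+1}-\tilde x\|^2\right) + \frac{L_f-L^k_f}{2}\|x^{k+1}-x^k\|^2.
\end{equation*}

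The remaining work is to recast this in the stated form by multiplying through by $2\mu_k/\widetilde L_k$ and estimating coefficients. The key bounds on $\widetilde L_k$ are $\widetilde L_k\le M_L(\mu_0+M_\lambda)=2M_1$ (from $L^k_f,L^k_g\le M_L$ via \eqref{eq-bd-MLf}, $\mu_k\le\mu_0$, and $\lambda_{k+1}\le M_\lambda$ via \eqref{eq-bd-lambda}), together with the lower bounds $\widetilde L_k\ge\lambda_{k+1}\check L$ and $\widetilde L_k\ge\mu_k L^k_f\ge\mu_k\check L$. The upper bound $\widetilde L_k\le 2M_1$, combined with $\psi(x^{k+1})-\psi^*\ge0$ (feasibility of $x^{k+1}$ from Lemma~\ref{lm-well-defined}(i)), gives $\tfrac{\mu_k}{M_1}(\psi(x^{k+1})-\psi^*)\le\tfrac{2\mu_k}{\widetilde L_k}(\psi(x^{k+1})-\psi^*)$; the $\alpha_3$ term becomes $\tfrac{2\mu_k^2\lambda_{k+1}\alpha_3}{\widetilde L_k}\le\tfrac{2\alpha_3}{\check L}\mu_k^2=M_2\mu_k^2$ (trivially zero when $\lambda_{k+1}=0$); and for the last term I argue by cases. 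If $L_f\le L^k_f$ the term is nonpositive, hence bounded by $M_3(\psi(x^k)-\psi(x^{k+1}))\ge0$; if $L_f>L^k_f$ then $L_f>\check L$, so $\tfrac{\mu_k(L_f-L^k_f)}{\widetilde L_k}\le\tfrac{L_f-\check L}{\check L}$, and the descent inequality \eqref{eq-f-des} supplies $\|x^{k+1}-x^k\|^2\le\tfrac{2}{\tau_1}(\psi(x^k)-\psi(x^{k+1}))$, producing exactly $M_3(\psi(x^k)-\psi(x^{k+1}))$. Assembling these three estimates yields \eqref{eq-ball-prox-conv}. The main obstacle is purely bookkeeping: tracking which direction of the bound on $\widetilde L_k$ is needed in each place (upper for the left-hand coefficient, lower for the $\alpha_3$ and $L_f$ terms) and spotting the telescoping cancellation; once the coefficients are matched, the claim follows directly.
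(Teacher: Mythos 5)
Your proposal is correct and follows essentially the same route as the paper's proof: specialize \eqref{eq-ball-prox} at $i=i_k$, $j=j_k$ with $P_2=0$, use convexity of $f$ and of $g_{\mu_k}$ (via Proposition~\ref{prop-g-mu}(iv), feasibility of $\tilde x$, and \eqref{eq-gmu-appr}) to merge the quadratics into $\widetilde L_k/(2\mu_k)$, then invoke the bounds $\mu_k\check L\le\widetilde L_k\le 2M_1$, $\lambda_{k+1}\check L\le\widetilde L_k$, and \eqref{eq-f-des}. The only cosmetic difference is that the paper replaces $L_f-L^k_f$ by $\max\{0,L_f-\check L\}$ at the outset, whereas you handle it by a case split at the end; the content is identical.
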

	\begin{proof}
		Let $\{\lambda_{k+1}\}$, $\{L^{k}_{f}\}$ and $\{L^{k}_{g}\}$ be generated by Algorithm~\ref{alg-SMBA}.
		By setting $i=i_k$ and $j=j_k$ in \eqref{eq-ball-prox} with $P_2 = 0$, we have,
		for any $x\in{\mathbb X}$,
		\begin{align}\label{eq-des-conv-1}
				\psi(x^{k+1}) \!-\! \psi(x)
				&\le f(x^{k}) \!-\! f(x) + \langle \nabla f(x^{k}), x \!-\! x^{k}\rangle +
				\lambda_{k+1} g_{k,j_k}(x)
				\notag\\ & \  \quad \!+\! \frac{L^{k}_{f}}{2}\|x\!-\!x^{k}\|^2
				\!+\! \frac{L_{f} \!-\! L^{k}_{f}}{2}\|x^{k+1} \!-\! x^{k}\|^2
				\!-\! \frac{{\changescolor \widetilde{L}}_{k}}{2\mu_{k}}  \|x^{k+1} \!-\!
				x\|^2
				\notag\\ &  \le \lambda_{k+1} g_{k,j_k}(x)
				\!+\! \frac{L^{k}_{f}}{2}\|x\!-\!x^{k}\|^2
				\notag\\& \quad \ \!+\! \frac{\max\{0, L_f -\check{L}\}}{2}\|x^{k+1} \!-\! x^{k}\|^2
				\!-\! \frac{{\changescolor \widetilde{L}}_{k}}{2\mu_{k}}  \|x^{k+1} \!-\!
				x\|^2,
		\end{align}
		where ${\changescolor \widetilde{L}_{k} \coloneqq} \mu_{k} L^{k}_{f} + \lambda_{k+1} L^{k}_{g}$,
		and the last inequality holds because of the convexity of $f$ and the fact
		$
		L_f - L_{f}^{k} \le L_f - \check{L} \le \max\{0, L_f -\check{L}\}.
		$
		Next, by Assumption~\ref{ass-conv} and Proposition~\ref{prop-g-mu}(iv), $g_{\mu_{k}}$ is convex.
		Hence, for any $x\in G^{-1}(\mathcal{K})$,
		\begin{align*}
				g_{k,j_k}(x)  & = g_{\mu_{k}}(x^{k}) + \langle \nabla g_{\mu_{k}}(x^{k}),\, x -
				x^{k} \rangle  + \frac{L^{k}_{g}}{2\mu_{k}}\|x-x^{k}\|^2
				\\  & \le g_{\mu_{k}}(x) + \frac{L^{k}_{g}}{2\mu_{k}}\|x-x^{k}\|^2
				\le \alpha_{3} \mu_{k}  + \frac{L^{k}_{g}}{2\mu_{k}}\|x-x^{k}\|^2,
		\end{align*}
		where the equality follows from the definition of $g_{k,j_k}$ in
		\eqref{opt-sub} and the last inequality
		follows from \eqref{eq-gmu-appr} and the fact that $\sigma_{\mathcal{B}}(G(x)) \le 0$.		
		Combining this with \eqref{eq-des-conv-1}, we have, for all $x\in G^{-1}(\mathcal{K})$,
		\begin{align}\label{eq-des-M_+}
				\psi(x^{k+1}) - \psi(x)
				& \le \alpha_{3} \lambda_{k+1} \mu_{k} + \frac{\max\{0, L_f -\check{L}\}}{2}\|x^{k+1} - x^{k}\|^2
				\notag\\ &\quad + \frac{{\changescolor \widetilde{L}}_{k}}{2\mu_{k}} \left(\|x^{k} -
				x\|^2 - \|x^{k+1} - x\|^2\right).
		\end{align}
		
Now, from \eqref{eq-bd-MLf}, \eqref{eq-bd-lambda} and the facts $\mu_{k}\le \mu_{0}$ and $\lambda_{k+1}\ge 0$, we have
		\begin{align}
			& \mu_{k}\check{L} \le {\changescolor \widetilde{L}}_{k} = \mu_{k} L^{k}_{f} + \lambda_{k+1} L^{k}_{g} \le \mu_{0} M_{L} + M_{\lambda} M_{L} = 2 M_1, \label{eq-tilde L-bd}\\
			& \lambda_{k+1}/{\changescolor \widetilde{L}}_{k} \le (L^{k}_{g})^{-1} \le \check{L}^{-1}. \label{eq-lambda-tilde L}
		\end{align}
		From the second inequality in \eqref{eq-tilde L-bd}
		and the fact $\psi(x^{k+1}) - \psi^* \ge 0$, it follows that
		\begin{align*}
				& \frac{\mu_{k}}{M_1}
				\left( \psi(x^{k+1}) - \psi^*\right)
				\le \frac{2\mu_{k}}{{\changescolor \widetilde{L}}_{k}}
				\left( \psi(x^{k+1}) - \psi^*\right)
				\\ & \overset{\mathrm{(a)}}{\le} \frac{2\alpha_{3} \lambda_{k+1}}{{\changescolor \widetilde{L}}_{k}}\mu_{k}^2
				+ \frac{\mu_{k}\max\{0, L_f -\check{L}\}}{{\changescolor \widetilde{L}}_{k}}\|x^{k+1} - x^{k}\|^2
				+ \|x^{k} - \tilde x\|^2 - \|x^{k+1} - \tilde x\|^2		
				\\ & \overset{\mathrm{(b)}}{\le} \frac{2\alpha_{3}}{\check{L}} \mu_{k}^2
				+ \frac{\max\{0, L_f -\check{L}\}}{\check{L}}\|x^{k+1} - x^{k}\|^2
				+ \|x^{k} - \tilde x\|^2 - \|x^{k+1} - \tilde x\|^2
				\\&\overset{\mathrm{(c)}}{\le} \frac{2\alpha_{3}}{\check{L}} \mu_{k}^2
				+ \frac{2\max\{0, L_f -\check{L}\}}{\tau_1 \check{L}} \left(\psi(x^{k}) - \psi(x^{k+1})\right)
				+ \|x^{k} - \tilde x\|^2 - \|x^{k+1} - \tilde x\|^2,	
		\end{align*}
		where (a) holds by substituting $x=\tilde{x} \in \Omega^*$ in \eqref{eq-des-M_+}, (b) uses \eqref{eq-lambda-tilde L} and the first inequality in \eqref{eq-tilde L-bd}, and (c) follows from \eqref{eq-f-des}.
	\end{proof}
	
We now establish the convergence of the $\{x^k\}$ generated by {\em s}MBA and its iteration complexity for function values; recall from Lemma~\ref{lm-well-defined}(i) that $G(x^k)\in {\cal K}$ for all $k$.
	\begin{theorem}[Complexity bound in convex setting]\label{th-complexity-conv}
		Consider \eqref{opt-conic-dc} and \eqref{opt-supp-dc}, and suppose that Assumptions~\ref{ass-gen},
		\ref{ass-sm}, \ref{ass-muk} and \ref{ass-conv} hold. Let $\{x^k\}$ be generated by Algorithm~\ref{alg-SMBA}.
		Define $M_1,\, M_2$ and $M_3$ by \eqref{eq-M1-2-3}.
		Then, we have, for every $K \in \mathbb{N}_0$,
		\begin{equation}\label{eq-complexity-conv}
				\psi(x^{K+1}) \!-\! \psi^* \!\le\! \frac{M_1}{\sum_{k={\changescolor \lceil K/2\rceil}}^{K}\mu_{k}} \left(M_2\!\!\!\sum_{k={\changescolor \lceil K/2\rceil}}^{K}\!\!\!\!\mu_{k}^2
				+ M_3\left(\psi(x^{0}) - \psi^*\right) + {\changescolor{\rm diam}(\Omega_0)}^2\right)\!\!,
		\end{equation}
		where $\Omega^*$ and $\psi^*$ are the solution set and the optimal value of \eqref{opt-conic-dc}, respectively{\changescolor, and ${\changescolor{\rm diam}(\Omega_0)}:= \sup_{x,y\in \Omega_0}\|x - y\|<\infty$}.\footnote{\changescolor Notice that ${\changescolor{\rm diam}(\Omega_0)}$ is finite because $\Omega_0$ is bounded. Also, note that the right hand side of \eqref{eq-complexity-conv} converges to zero as $K\to\infty$ thanks to Assumption~\ref{ass-muk}, since this assumption implies $\sum_{k=\lceil K/2\rceil}^{K}\mu_{k}^2 \le \mu_{\lceil K/2\rceil}\sum_{k=\lceil K/2\rceil}^{K}\mu_{k}$, $ \mu_{\lceil K/2\rceil}\to 0$ and $\sum_{k=\lceil K/2\rceil}^{K}\mu_{k}\to \infty$.}
		If we assume further that $\sum_{k=0}^{\infty}\mu_{k}^2 < \infty$, then $\{x^{k}\}$ converges to some $x^*\in \Omega^*$.
	\end{theorem}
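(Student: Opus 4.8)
The plan is to prove the bound \eqref{eq-complexity-conv} by summing the one-step estimate \eqref{eq-ball-prox-conv}, and then to leverage the same estimate to obtain convergence of the whole sequence through a Fej\'er-monotonicity (Opial) argument. For the complexity bound, I would fix an arbitrary $\tilde x\in\Omega^*$ and sum \eqref{eq-ball-prox-conv} over $k$ from $\lceil K/2\rceil$ to $K$. On the right-hand side the distance terms telescope and are bounded above by $\|x^{\lceil K/2\rceil}-\tilde x\|^2$, while the terms $\psi(x^{k})-\psi(x^{k+1})$ telescope and are bounded above by $\psi(x^0)-\psi^*$ because $\{\psi(x^k)\}$ is nonincreasing by \eqref{eq-f-des}. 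On the left-hand side, using once more that $\{\psi(x^k)\}$ is nonincreasing, each summand satisfies $\psi(x^{k+1})-\psi^*\ge\psi(x^{K+1})-\psi^*\ge 0$, so the sum dominates $(\psi(x^{K+1})-\psi^*)\sum_{k=\lceil K/2\rceil}^{K}\mu_k$. Since every solution of \eqref{opt-conic-dc} lies in $\Omega_0$ (indeed $\psi(\tilde x)=\psi^*\le\psi(x^0)$ and $G(\tilde x)\in\mathcal K$) and $x^{\lceil K/2\rceil}\in\Omega_0$ by Lemma~\ref{lm-well-defined}(i), I may replace $\|x^{\lceil K/2\rceil}-\tilde x\|^2$ by $\mathrm{diam}(\Omega_0)^2$; rearranging then yields \eqref{eq-complexity-conv}.

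For the convergence statement, assume in addition $\sum_{k\ge0}\mu_k^2<\infty$. Discarding the nonnegative term on the left of \eqref{eq-ball-prox-conv} gives, for every $\tilde x\in\Omega^*$,
\begin{equation*}
\|x^{k+1}-\tilde x\|^2\le\|x^{k}-\tilde x\|^2+M_2\mu_k^2+M_3\bigl(\psi(x^{k})-\psi(x^{k+1})\bigr).
\end{equation*}
The perturbation is summable: $\sum_k\mu_k^2<\infty$ by hypothesis, and $\sum_k(\psi(x^{k})-\psi(x^{k+1}))\le\psi(x^0)-\psi^*<\infty$ by monotonicity and boundedness below. A standard lemma on quasi-monotone sequences then shows that $\lim_k\|x^{k}-\tilde x\|$ exists for every $\tilde x\in\Omega^*$; that is, $\{x^k\}$ is quasi-Fej\'er with respect to $\Omega^*$.

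It remains to identify the cluster points and invoke Opial's argument. The right-hand side of \eqref{eq-complexity-conv} tends to $0$ as $K\to\infty$ (as noted after the theorem, using $\sum_{k=\lceil K/2\rceil}^{K}\mu_k^2\le\mu_{\lceil K/2\rceil}\sum_{k=\lceil K/2\rceil}^{K}\mu_k$ together with Assumption~\ref{ass-muk}), so $\psi(x^{k})\to\psi^*$. The sequence $\{x^k\}\subseteq\Omega_0$ is bounded, hence admits cluster points; for any such $\hat x$, closedness of $\mathcal K$ gives $G(\hat x)\in\mathcal K$, while continuity of $\psi=f+P_1$ (here $f$ is smooth, $P_1$ is finite-valued convex hence continuous, and $P_2=0$ by Assumption~\ref{ass-conv}) gives $\psi(\hat x)=\psi^*$, so $\hat x\in\Omega^*$. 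Since $\lim_k\|x^k-\tilde x\|$ exists for every $\tilde x\in\Omega^*$ and all cluster points lie in $\Omega^*$, the usual Opial argument forces uniqueness of the cluster point: if $\hat x_1,\hat x_2\in\Omega^*$ are two cluster points, expanding $\|x^k-\hat x_1\|^2-\|x^k-\hat x_2\|^2$ shows $\lim_k\langle x^k,\hat x_1-\hat x_2\rangle$ exists, and passing to the two subsequences forces $\|\hat x_1-\hat x_2\|^2=0$. Thus $\{x^k\}$ converges to some $x^*\in\Omega^*$. I expect the main care to be needed in verifying that the quasi-Fej\'er perturbation is genuinely summable (through the telescoping $M_3$-term) and in the step placing every cluster point in $\Omega^*$ via continuity of $\psi$ and closedness of $\mathcal K$.
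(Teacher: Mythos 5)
Your proof is correct and follows essentially the same route as the paper: both sum the one-step estimate \eqref{eq-ball-prox-conv} over $k$ from $\lceil K/2\rceil$ to $K$, telescope, and use $\Omega^*\subseteq\Omega_0$ (so that $\|x^{\lceil K/2\rceil}-\tilde x\|\le \mathrm{diam}(\Omega_0)$) to obtain \eqref{eq-complexity-conv}, and both establish sequential convergence via quasi-Fej\'er monotonicity of $\{\|x^k-\tilde x\|\}$ with summable perturbations $M_2\mu_k^2+M_3(\psi(x^k)-\psi(x^{k+1}))$. The only cosmetic differences are that the paper cites \cite[Proposition~1]{03Iusem} for the quasi-Fej\'er convergence principle (which you reprove via the standard Opial-type argument) and invokes Theorem~\ref{th-complexity} to place a cluster point in $\Omega^*$, whereas you deduce this from the just-proved bound \eqref{eq-complexity-conv} together with continuity of $\psi$ and closedness of $\mathcal{K}$ --- both are valid.
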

	\begin{proof}
		Let $K\in \mathbb{N}_0$.
		For any $\tilde{x}\in \Omega^*$, summing \eqref{eq-ball-prox-conv} from {\changescolor$k=\lceil K/2\rceil$} to $K$, we get
		\begin{equation*}
			\begin{split}
				& \textstyle \left(\psi(x^{K+1}) - \psi^*\right) \sum_{k={\changescolor \lceil K/2\rceil}}^{K}\mu_{k}
				\le \sum_{k={\changescolor \lceil K/2\rceil}}^{K}\mu_{k} \left( \psi(x^{k+1}) - \psi^*\right)
				\\ & \textstyle\le M_1 \left(M_2 \sum_{k={\changescolor \lceil K/2\rceil}}^{K}\mu_{k}^2
				\!+\! M_3 \left(\psi({\changescolor x^{\lceil K/2\rceil}}) - \psi(x^{K+1})\right)
				\!+\! \|{\changescolor x^{\lceil K/2\rceil}} \!-\! \tilde x\|^2 \!-\! \|x^{K+1} \!-\! \tilde x\|^2\right)
				\\ & \textstyle\le M_1 \left(M_2 \sum_{k={\changescolor \lceil K/2\rceil}}^{K}\mu_{k}^2
				+ M_3 \left(\psi(x^{0}) - \psi^*\right)
				+ {\changescolor{\rm diam}(\Omega_0)}^2\right),	
			\end{split}
		\end{equation*}
where {\changescolor the first inequality holds} as $\psi(x^k)\ge \psi(x^{k+1})$ (see \eqref{eq-f-des}){\changescolor, and we used $\psi(x^k)\ge \psi(x^{k+1})$ and $x^k\in \Omega_0$ (see Lemma~\ref{lm-well-defined}(i)) in the last inequality}. This proves \eqref{eq-complexity-conv}.
		
		Now, suppose further that $\sum_{k=0}^{\infty}\mu_{k}^2 < \infty$. First, we {\changescolor know} from {\changescolor Theorem~\ref{th-complexity} that some cluster point of $\{x^k\}$ belongs to $\Omega^*$. In addition, from} \eqref{eq-ball-prox-conv}, it holds that
		\begin{equation*}
			\|x^{k+1} - \tilde x\|^2 \le \|x^{k} - \tilde x\|^2
			+ M_2 \mu_{k}^2
			+ M_3 (\psi(x^{k}) - \psi(x^{k+1})) \quad \forall\, k\in \mathbb{N}_0,\, \tilde{x} \in \Omega^*.
		\end{equation*}
		Note that $\psi(x^{k}) - \psi(x^{k+1})$ is non-negative (see \eqref{eq-f-des}) and summable (since $\psi^*>-\infty$ thanks to Assumption~\ref{ass-gen}).
		Then, according to \cite[Proposition 1]{03Iusem}, we conclude that $\{x^{k}\}$ converges to some $x^*\in \Omega^*$.
	\end{proof}

We next discuss local convergence rate. In the literature, local convergence rate of first-order methods is usually studied by assuming the so-called Kurdyka-{\L}ojasiewicz (KL) property, and the explicit convergence rate is closely related to a quantity known as the KL exponent; see, e.g., \cite{10ABRS,13ABS,18LP}. Here, we consider the growth condition in \eqref{eq-KL-theta} below, which is closely related to the KL property with exponent $\theta\in (0,1)$; see \cite[Theorem~5]{17BNPS} and \cite[Lemma~3.10]{21YPL}. In particular, condition \eqref{eq-KL-theta} with $\theta=\frac12$ can be satisfied by many application models; see, e.g., \cite[Section~4.1]{21YPL}. In addition to \eqref{eq-KL-theta}, we also need an additional control on $\{\mu_k\}$, as stated in the next assumption.
	\begin{assumption}\label{ass-mu-2}
		There exist ${\bar r}\in (\frac{1}{2}, 1)$, $c_1>0$, $c_2>0$ and $\bar k_0\in \mathbb{N}_{0}$
		such that
		\begin{equation}\label{eq-mu-delta}
			c_1 (k+1)^{-{\bar r}} \le \mu_{k} \le c_2 (k+1)^{-{\bar r}} \quad \forall\, k\ge \bar k_0.
		\end{equation}
	\end{assumption}
One can check that if we define $\{\mu_k\}$ as in \eqref{eq-def-muk} and impose in addition that $r_k \equiv \bar r$ for some $\bar r \in (\frac12,1)$, then $\{\mu_k\}$ satisfies both Assumptions~\ref{ass-muk} and \ref{ass-mu-2}.

We are now ready to present our local convergence rate result. The induction argument in our proof is motivated by the proof of {\changescolor\cite[Lemma~4.2]{24LMX}}.
	
	\begin{theorem}[Local convergent rate under a growth condition]\label{th-rate-KL}
Consider \eqref{opt-conic-dc} and \eqref{opt-supp-dc}, and suppose that Assumptions~\ref{ass-gen}, \ref{ass-sm}, \ref{ass-muk}, \ref{ass-conv} and \ref{ass-mu-2} hold.
		Let $\Omega^*$ and $\psi^*$ be the solution set and the optimal value of \eqref{opt-conic-dc}, respectively.
		Assume there exist $\kappa>0$, $\theta\in(0, 1)$, $\epsilon_0>0$ and $\epsilon_1\in (0,1)$
		such that
		\begin{equation}\label{eq-KL-theta}
			\mathrm{dist}(x, \Omega^*) \le \kappa (\psi(x) - \psi^*)^{1-\theta}
		\end{equation}
		for all $x\in \Omega_0$ with $\mathrm{dist}(x, \Omega^*)\le
		\epsilon_0$
		and $\psi(x) \le \psi^* + \epsilon_1$.
		Let $\{x^k\}$ be generated by Algorithm~\ref{alg-SMBA}.
		Then $x^* := \lim_{k\to \infty}x^k$ exists, and there exist $k_1\in \mathbb{N}_0$ and $\kappa_1 >0$ such that
		\begin{equation}\label{eq-rate-conv}
			\|x^{k} - x^*\| \le \kappa_1\cdot (k+1)^{-s} \quad \forall\, k\ge k_1,
		\end{equation}
		where
		\begin{equation}\label{eq-rate-s}
			s \coloneqq s(\bar r) \coloneqq \begin{cases}
				{\bar r} - \frac{1}{2}
				& \mbox{if } \theta \in (0, \frac{1}{2}],\\
				\min\left\{{\bar r} - \frac{1}{2}, \frac{(1-{\bar r})(1-\theta)}{2\theta-1}\right\}
				& \mbox{if } \theta\in (\frac{1}{2}, 1).
			\end{cases}
		\end{equation}
	\end{theorem}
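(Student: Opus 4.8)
The plan is to reduce the statement to two quantitative ingredients—a polynomial decay rate for the distance $d_k:=\mathrm{dist}(x^k,\Omega^*)$ and for the value gap $\Delta_k:=\psi(x^k)-\psi^*$—and then to upgrade a distance-to-the-set bound into a distance-to-the-limit bound via a quasi-Fej\'er argument. Existence of $x^*=\lim_k x^k\in\Omega^*$ is already guaranteed by Theorem~\ref{th-complexity-conv}, since \eqref{eq-mu-delta} with ${\bar r}>\tfrac12$ forces $\sum_k\mu_k^2<\infty$; I also record that $\{\Delta_k\}$ is nonincreasing and $\Delta_k\to0$ by \eqref{eq-f-des} and Theorem~\ref{th-complexity-conv}.

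First I would extract the quasi-Fej\'er inequality. Dropping the nonnegative term $\tfrac{\mu_k}{M_1}\Delta_{k+1}$ in \eqref{eq-ball-prox-conv} gives, for every $\tilde x\in\Omega^*$, the bound $\|x^{k+1}-\tilde x\|^2\le\|x^k-\tilde x\|^2+\epsilon_k$ with the $\tilde x$-independent error $\epsilon_k:=M_2\mu_k^2+M_3(\Delta_k-\Delta_{k+1})\ge0$. Letting $\bar x^k$ be the projection of $x^k$ onto the closed convex set $\Omega^*$, summing this from $k$ to $m-1$ with $\tilde x=\bar x^k$, and letting $m\to\infty$ (so $x^m\to x^*$) yields $\|x^*-\bar x^k\|^2\le d_k^2+\sum_{j\ge k}\epsilon_j$; since $\sum_{j\ge k}\epsilon_j=M_2\sum_{j\ge k}\mu_j^2+M_3\Delta_k$ telescopes, I obtain the master estimate
\[
\|x^k-x^*\|\le d_k+\sqrt{d_k^2+M_2\textstyle\sum_{j\ge k}\mu_j^2+M_3\Delta_k}\le 2d_k+\sqrt{M_2\textstyle\sum_{j\ge k}\mu_j^2}+\sqrt{M_3\Delta_k}.
\]
Under \eqref{eq-mu-delta} one has $\sum_{j\ge k}\mu_j^2=\Theta\!\left(k^{-(2{\bar r}-1)}\right)$, so the middle term already contributes the universal rate $k^{-({\bar r}-\frac12)}$; it remains only to rate $d_k$ and $\Delta_k$.

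Those rates come from a sharpened recursion. Taking $\tilde x=\bar x^k$ in \eqref{eq-ball-prox-conv} and using $\|x^{k+1}-\bar x^k\|\ge d_{k+1}$, I set $V_k:=d_k^2+M_3\Delta_k$ and get $V_{k+1}+\tfrac{\mu_k}{M_1}\Delta_{k+1}\le V_k+M_2\mu_k^2$. For $k$ large, $x^{k+1}$ lies in the region where \eqref{eq-KL-theta} applies (as $d_{k+1}\to0$ and $\Delta_{k+1}\to0$), so $d_{k+1}\le\kappa\Delta_{k+1}^{1-\theta}$ and hence $V_{k+1}\le\kappa^2\Delta_{k+1}^{2(1-\theta)}+M_3\Delta_{k+1}$; for small gaps this inverts into $\Delta_{k+1}\ge c\,V_{k+1}^{q}$ with $q=\max\{1,\tfrac{1}{2(1-\theta)}\}$. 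Substituting produces the scalar recursion $V_{k+1}+c'\mu_kV_{k+1}^{q}\le V_k+M_2\mu_k^2$, to which I would apply an induction argument in the spirit of \cite[Lemma~4.2]{24LMX}: positing $V_k\le A(k+1)^{-\gamma_V}$ and propagating it, one finds for $\theta>\tfrac12$ the admissible exponent $\gamma_V=\min\{2{\bar r}(1-\theta),\,\tfrac{2(1-{\bar r})(1-\theta)}{2\theta-1}\}$, the two competitors arising from balancing the drift $c'\mu_kV^q$ against the forcing $M_2\mu_k^2$ and against the increment $V_k-V_{k+1}$ respectively; the case $\theta\le\tfrac12$ is the linear regime $q=1$, giving $V_k=O(\mu_k)=O(k^{-{\bar r}})$. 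From $\gamma_V$ I read off $d_k\le\sqrt{V_k}=O(k^{-\gamma_V/2})$ and $\Delta_k\le V_k/M_3=O(k^{-\gamma_V})$.

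Finally I would assemble the three contributions in the master estimate. Since $1-\theta<\tfrac12$ for $\theta>\tfrac12$, the term $\sqrt{\Delta_k}$ always decays faster than $d_k$, so the binding exponents are $\{{\bar r}-\tfrac12,\ \gamma_V/2\}$, and a short comparison of $\theta$ with $\tfrac{1}{2{\bar r}}$ shows that $\min\{{\bar r}-\tfrac12,\ {\bar r}(1-\theta),\ \tfrac{(1-{\bar r})(1-\theta)}{2\theta-1}\}$ collapses to the claimed $s({\bar r})$ in \eqref{eq-rate-s}; the case $\theta\le\tfrac12$ reduces to $s={\bar r}-\tfrac12$ because then both $d_k$ and $\sqrt{\Delta_k}$ decay at least as fast as $k^{-({\bar r}-\frac12)}$. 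I expect the main obstacle to be the third paragraph's nonlinear recursion: making the inversion of the growth condition rigorous near the limit and, above all, pinning down the exponent $\gamma_V$ through the induction—this is precisely where the case split at $\theta=\tfrac12$ and the nonstandard exponent $\tfrac{(1-{\bar r})(1-\theta)}{2\theta-1}$ genuinely originate.
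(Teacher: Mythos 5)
Your proposal is correct, and it follows the same overall architecture as the paper's proof --- the key inequality \eqref{eq-ball-prox-conv}, a quasi-Fej\'er estimate converting distance-to-set into distance-to-limit (the paper's analogue is \eqref{eq-x-x*-dist-bd}), inversion of the growth condition \eqref{eq-KL-theta} with exponent $q=\max\{1,\tfrac{1}{2(1-\theta)}\}$, and an induction on a polynomial-decay ansatz closed via the increasing map $t\mapsto t+c'\mu_k t^{q}$ --- but with a genuinely different bookkeeping. The paper inducts on the single potential $\beta_k=\|x^k-x^*\|^2+M_2\gamma_k+M_3\delta_k$, where the tail $\gamma_k=\sum_{i\ge k}\mu_i^2$ is absorbed; this removes the $\mu_k^2$ forcing from the recursion ($M_1^{-1}\mu_k\delta_{k+1}\le\beta_k-\beta_{k+1}$), at the price that inverting the growth condition leaks a term $5M_2\gamma_{k+1}$ (handled by a case split and a convexity estimate for $t\mapsto t^{p}$), and the induction, run under the constraints $2s\le 2\bar r-1$ and $2s\le\tfrac{2(1-\bar r)(1-\theta)}{2\theta-1}$, directly outputs \eqref{eq-rate-conv}. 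You instead induct on $V_k=\mathrm{dist}(x^k,\Omega^*)^2+M_3\Delta_k$, keeping $M_2\mu_k^2$ as explicit forcing: your inversion is cleaner (no tail leakage), but the forcing replaces the paper's constraint $2\bar r-1$ by $2\bar r(1-\theta)$, and you must reassemble $\|x^k-x^*\|$ from $V_k$ through your master estimate, which injects the extra $O(k^{-(\bar r-1/2)})$ term. Consequently your route needs one argument the paper's does not: that $\min\{\bar r-\tfrac12,\ \bar r(1-\theta),\ \tfrac{(1-\bar r)(1-\theta)}{2\theta-1}\}$ collapses to the claimed $s(\bar r)$. This is correct as you assert: $\bar r(1-\theta)\ge\bar r-\tfrac12$ when $\theta\le\tfrac{1}{2\bar r}$, while $\bar r(1-\theta)\ge\tfrac{(1-\bar r)(1-\theta)}{2\theta-1}$ when $\theta\ge\tfrac{1}{2\bar r}$, so the middle competitor never strictly binds. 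I also checked your claimed exponents for the recursion $V_{k+1}+c'\mu_kV_{k+1}^{q}\le V_k+M_2\mu_k^2$: the induction closes precisely under $\gamma_V\, q\le\bar r$ and $\gamma_V(q-1)\le 1-\bar r$, matching your $\gamma_V$, and in the case $\theta\in(0,\tfrac12]$ (where $q=1$) one indeed gets $V_k=O(k^{-\bar r})$ and hence $s=\bar r-\tfrac12$. The induction details you defer are exactly the paper's monotone-function device, so nothing essential is missing; net, your organization trades the paper's tail-term bookkeeping inside the Lyapunov function for a final min-collapse, and both deliver \eqref{eq-rate-s}.
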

	\begin{remark}[Concerning the bound \eqref{eq-rate-conv}]
		Suppose that Assumptions~\ref{ass-gen}, \ref{ass-sm}, \ref{ass-conv} and the condition \eqref{eq-KL-theta} hold.
		Let
\begin{equation}\label{choicemu}
r \in (0.5,1),\ \mu_0 > 0\ {\rm and}\ \mu_k = \mu_0 (k+1)^{-r}\ \ \forall k\in \mathbb{N}.
\end{equation}
		Then {\changescolor$\max\{1,\sum_{k=\lceil K/2\rceil}^{K}\mu_{k}^2\}/\sum_{k=\lceil K/2\rceil}^{K}\mu_{k} = O(K^{-(1-r)})$}, and $\{\mu_k\}$ takes the form of \eqref{eq-def-muk} with $r_k \equiv r$, $n_0 = 0$ and $\nu_0 = 1$; in particular, Assumption~\ref{ass-muk} is satisfied with this choice of $\{\mu_k\}$ thanks to Proposition~\ref{prop-muk}.
		Using these together with Theorem~\ref{th-complexity-conv} and \eqref{eq-KL-theta}, one can see that
		$
		\mathrm{dist}(x^{K}, \Omega^*) = O\left(K^{-\bar s}\right)$,
		where $\bar s\coloneqq\bar s(r) \coloneqq (1 - \theta)(1 - r)$.
		Clearly, we have $\sup_{r\in(\frac{1}{2}, 1)} \bar s(r) = \frac{1-\theta}{2}$.
		
		On the other hand, for the choice of $\{\mu_k\}$ in \eqref{choicemu}, one can also apply Theorem~\ref{th-rate-KL} to deduce that
		$\mathrm{dist}(x^{K}, \Omega^*) \le \|x^{K}-x^*\|= O(K^{-s})$,
		where $s \coloneqq s(r)$ is defined in \eqref{eq-rate-s}.
		Note that
		\begin{equation}\label{eq-sup-s}
			\sup_{r\in(\frac{1}{2}, 1)} s(r) = \begin{cases}
				\frac{1}{2} & \mbox{if }\theta \in (0, \frac{1}{2}],
				\\ \frac{1 - \theta}{2\theta} & \mbox{if } \theta\in (\frac{1}{2}, 1),
			\end{cases}
		\end{equation}
		where $\sup_{r\in(\frac{1}{2}, 1)} s(r) = s(\frac{1}{2\theta})$ when $\theta\in (\frac{1}{2}, 1)$.
		From \eqref{eq-sup-s}, one can see that
		$
		\textstyle\sup_{r\in(\frac{1}{2}, 1)} \bar s(r) < \sup_{r\in(\frac{1}{2}, 1)} s(r).
		$
		In other words, for the choice of $\{\mu_k\}$ in \eqref{choicemu}, by leveraging the explicit knowledge of $\theta$ in choosing $r$, the local convergence rate guaranteed by Theorem~\ref{th-rate-KL} is better than that obtained by directly applying Theorem~\ref{th-complexity-conv} and \eqref{eq-KL-theta}.

        {\changescolor Finally, notice that the local convergence rate guaranteed by Theorem~\ref{th-rate-KL} appears to be worse than that derived in \cite{21YPL} for the case when $\mathbb{X} = \mathbb{R}^{n}$, $\mathbb{Y}=\mathbb{R}^{m}$ and $\mathcal{K}=\mathbb{R}^{m}_{-}$. Here, we need to point out that the complexity per (outer) iteration in \cite{21YPL} is different from that of our algorithm. Indeed, \cite{21YPL} assumed that the subproblems of their algorithm are solved exactly, while in general (say, when $m > 1$) these problems can only be solved inexactly and require an iterative solver; see \cite[Corollaries 4 and 5]{25BDL} for explicit complexity analysis involving inexactly solved subproblems. 
In contrast, our {\em s}MBA subproblems involve only one single inequality constraint, 
and can be solved efficiently via some simple one-dimensional root finding procedures when the proximal mapping of $\gamma P_1$ can be computed efficiently for all $\gamma > 0$.}
	\end{remark}
	
	\begin{proof}[{Proof of Theorem~\ref{th-rate-KL}}]
		By Assumption~\ref{ass-mu-2}, one can see that $\sum_{k=0}^{\infty}\mu_{k}^2 < \infty$. Hence, $\lim_{k\to \infty}x^{k} = x^* \in \Omega^*$ by Theorem~\ref{th-complexity-conv}.
		For simplicity, we write, for all $ k\in \mathbb{N}_{0}$,
		\begin{align}
\textstyle
			\delta_{k} \coloneqq \psi(x^{k}) - \psi^*,\quad \gamma_k \coloneqq  \sum_{i=k}^{\infty} \mu_{i}^2,
\quad \beta_{k}\coloneqq \|x^{k} - x^*\|^2
			+  M_2\gamma_{k} +  M_3 \delta_{k},\label{eq-delta-gamma-def}
		\end{align}
where $M_2$ and $M_3$ are defined in \eqref{eq-M1-2-3},
and we note the following simple observation for $\gamma_k$: since ${\bar r} > 1/2$, from \eqref{eq-delta-gamma-def} and \eqref{eq-mu-delta}, we have, for every integer $k \ge \bar k_0$,
		\begin{equation}\label{eq-bar gamma-bd}
\textstyle				{\gamma}_{k+1}  \le c_2^2 \sum_{i=k+1}^{\infty} (i+1)^{-2{\bar r}}
				\le c_2^2 \int_{k}^{\infty} (t+1)^{-2{\bar r}} dt
= c_2^2 (2{\bar r}-1)^{-1}(k+1)^{-(2{\bar r}-1)}.
		\end{equation}

Now, substituting $\tilde{x} = x^*$ in \eqref{eq-ball-prox-conv}, we get, for every $k\in \mathbb{N}_0$,
		\begin{equation}\label{eq-recur}
			\begin{split}
				M_1^{-1} \mu_{k}\delta_{k+1}
				&
				\le  M_2 \mu_{k}^2
				+  M_3 \left(\psi(x^{k}) - \psi(x^{k+1})\right) + \|x^{k} - x^*\|^2 - \|x^{k+1} - x^*\|^2 \\
&= \beta_{k} - \beta_{k+1}.
			\end{split}
		\end{equation}
		On the other hand, since $x^k\to x^*\in \Omega^*$, there exists an integer $k_{0}\ge 0$ such that
		\begin{equation}\label{eq-KL-region}
			\mathrm{dist}(x^{k}, \Omega^*) \le \epsilon_0
			\quad \mbox{and}\quad \delta_{k} = \psi(x^{k}) - \psi^* \le \epsilon_1 < 1 \quad \forall\, k \ge k_0.
		\end{equation}	
		This together with \eqref{eq-KL-theta} and the fact $x^k\in \Omega_0$ (see Lemma~\ref{lm-well-defined}(i)) for all $k\in \mathbb{N}_0$ gives
		\begin{equation}\label{eq-KL-k0}
			\mathrm{dist}(x^{k}, \Omega^*) \le \kappa (\psi(x^{k}) - \psi^*)^{1-\theta} = \kappa \delta_{k}^{1-\theta}\quad \forall\, k \ge k_0.
		\end{equation}
		

Next, we note from \eqref{eq-ball-prox-conv} that
		\begin{equation*}
			\|x^{k+1} - \tilde x\|^2 \le \|x^{k} - \tilde x\|^2
			+ M_2 \mu_{k}^2
			+ M_3 (\psi(x^{k}) - \psi(x^{k+1})) \quad \forall\, k\in \mathbb{N}_0,\, \tilde{x} \in \Omega^*.
		\end{equation*}
Consequently, for any $k \in \mathbb{N}_0$, $k^{\prime}\in \mathbb{N}$ and all $\tilde{x}\in \Omega^*$, we have
		\begin{equation}\label{eq-x-k-k'-tild x}
			\begin{split}
				\| x^{k+k^{\prime}} - \tilde{x} \|^2
				& \le \|x^{k+k^\prime-1} - \tilde{x} \|^2
				+ M_2 \mu_{k+k^\prime-1}^2
				+ M_3 (\psi(x^{k+k^\prime-1}) - \psi(x^{k+k^\prime}))
				\\ & \textstyle \le \| x^{k} - \tilde{x} \|^2
				+ M_2 \sum_{i=k}^{k+k^{\prime}-1}\mu_{i}^2
				+ M_3 (\psi(x^{k}) - \psi(x^{k+k^{\prime}}))
				\\ & \le \| x^{k} - \tilde{x} \|^2
				+ M_2 \gamma_{k}
				+ M_3 \delta_{k}.
			\end{split}
		\end{equation}
		Setting $\tilde{x} = \bar x^{k} \in \Omega^*$ with $\|x^{k} - \bar x^{k}\| = \mathrm{dist}(x^{k}, \Omega^*)$ in the last inequality, we obtain
		\begin{equation}\label{eq-xk-xk-prime}
			\begin{split}
				\|x^{k} - x^{k+k^{\prime}} \|^2
				&\le 2 \left(\|x^{k} - \bar x^{k} \|^2
				+ \| x^{k+k^{\prime}} - \bar x^{k} \|^2\right)
				\\& \le 4 \left(\mathrm{dist}(x^{k}, \Omega^*)^2
				+M_2 \gamma_{k}
				+ M_3 \delta_{k}
				\right) \quad\ \quad \forall\, k \in \mathbb{N}_0,\ k^{\prime}\in \mathbb{N}.
			\end{split}
		\end{equation}
		Thus, fixing $k\in \mathbb{N}_{0}$ and passing to the limit as $k^{\prime} \to \infty$ in \eqref{eq-xk-xk-prime}, we obtain
		\begin{equation}\label{eq-x-x*-dist-bd}
			\begin{split}
				&\|x^{k} - x^{*} \|^2  \le 4 \left(\mathrm{dist}(x^{k}, \Omega^*)^2
				+M_2 \gamma_{k}
				+ M_3 \delta_{k}
				\right) \quad \forall k\in \mathbb{N}_{0}.
			\end{split}
		\end{equation}
		Now, it follows that, for every $k\in \mathbb{N}_0$,
		\begin{equation}\label{eq-beta-bd}
			\begin{split}
				\beta_{k+1} & = \|x^{k+1} - x^*\|^2
				+ M_3 \delta_{k+1}
				+ M_2\gamma_{k+1}
 \le \|x^{k+1} - x^*\|^2
				+ M_3 \delta_{k}
				+ M_2\gamma_{k}
				\\ & \le \|x^{k} - x^{*}\|^2 + 2 M_3 \delta_{k}
				+ 2 M_2\gamma_{k} \le 2 \beta_{k},
			\end{split}
		\end{equation}
		where the first inequality follows from \eqref{eq-f-des}
		and \eqref{eq-delta-gamma-def},
		and the second inequality follows from \eqref{eq-x-k-k'-tild x} with $k^\prime = 1$ and $\tilde{x} = x^*\in \Omega^*$.
		
		From the definition of $\beta_{k}$ in \eqref{eq-delta-gamma-def} and \eqref{eq-x-x*-dist-bd}, it follows that, for every $k\ge k_0$,
		\begin{align}\label{eq-beta-bd-psi}
				 \beta_{k+1} & \!\le\! 4 \mathrm{dist}(x^{k+1}, \Omega^*)^2  \!+\! 5 M_3 \delta_{k+1}
				\!+\! 5 M_2 \gamma_{k+1}
\!\overset{\mathrm{(a)}}{\le}\! 4\kappa^2 \delta_{k+1}^{2(1-\theta)} \!+\! 5 M_3 \delta_{k+1} \!+\! 5 M_2 \gamma_{k+1}
				\notag\\& = 5 M_2 \gamma_{k+1} \!+\!
				\begin{cases}
					\left(4\kappa^2 \delta_{k+1}^{1-2\theta} + 5 M_3\right) \delta_{k+1} & \mbox{if } \theta\in (0,\frac{1}{2}],
					\\ \left(4\kappa^2  + 5 M_3 \delta_{k+1}^{2\theta-1}\right)\delta_{k+1}^{2(1-\theta)} & \mbox{if }\theta\in (\frac{1}{2}, 1),
				\end{cases}
				\notag\\ & \overset{\mathrm{(b)}}{\le} c_4 \delta_{k+1}^{1/p} + 5 M_2 \gamma_{k+1},
		\end{align}
		where (a) uses \eqref{eq-KL-k0}, and (b) holds {\changescolor thanks to the fact $\delta_{k+1} \le \epsilon_1$ (see \eqref{eq-KL-region}) upon defining} 
		\begin{equation}\label{def-p-c4}
p{\changescolor\,\coloneq} \max\left\{1,\frac1{2(1-\theta)}\right\}\ {\rm and}\
			c_4 {\changescolor\,\coloneq} \begin{cases}
				4\kappa^2 \epsilon_1^{1-2\theta} + 5M_3 & \mbox{if}\ \theta \in (0, \frac{1}{2}],
				\\
				4\kappa^2 + 5M_3 \epsilon_1^{2\theta - 1} & \mbox{if}\ \theta \in (\frac{1}{2}, 1).
			\end{cases}
		\end{equation}		
		
Now, define $b_1 {\changescolor\coloneq} 1/(c_4^{p}M_1)$, $b_2 {\changescolor\coloneq} 5  p M_2 2^{p-1}/(c_4^{p}M_1)$, and $k_1$, $C_1$ and $C_0$ by
		\begin{align}
			& k_1 {\changescolor\,\coloneq} \begin{cases}
				\max\left\{k_0, \bar k_0, \left\lceil \left( \frac{(2{\bar r} -1)4^{{\bar r}}}{b_1 c_1}\right)^{\frac{1}{1-{\bar r}}}\right\rceil - 1 \right\} & \mbox{if } \theta\in(0, \frac{1}{2}],\\
				\max\{k_0, \bar k_0\} & \mbox{if } \theta \in (\frac{1}{2}, 1),
			\end{cases}
			\label{eq-k_1-def}\\
			& C_1 {\changescolor\,\coloneq} \begin{cases}
				\max\left\{C_0, \frac{b_2c_2^3 2^{2{\bar r}}}{(2{\bar r} - 1)b_1c_1} \right\}
				& \mbox{if } \theta\in(0, \frac{1}{2}],\\
				\max\left\{C_0, \frac{4^{sp+1}b_2 c_2^3}{b_1 c_1 (2{\bar r} - 1)}, \left(\frac{s4^{sp+1}}{b_1c_1}\right)^{\frac{1}{p-1}} \right\}
				& \mbox{if } \theta \in (\frac{1}{2}, 1),
			\end{cases} \label{eq-C_1-def-ii}\\
			& C_0 {\changescolor\,\coloneq} \max\{\bar \beta (k_1 + 1)^{2s}, 5 c_2^2 (2{\bar r}-1)^{-1} M_2 2^{2{\bar r}-1}\}, \notag
		\end{align}
		where $s$ is defined in \eqref{eq-rate-s}, $c_1$, $c_2$, ${\bar r}$ and $\bar k_0$ are given in Assumption~\ref{ass-mu-2}, and $\bar\beta := \sup_{k\in\mathbb{N}_0} \beta_{k}$, which is finite because $\lim_{k\to\infty}\beta_{k} = 0$.\footnote{From the definition of $\beta_k$ in \eqref{eq-delta-gamma-def}, Theorem~\ref{th-complexity-conv} and
		the fact $\sum_{k=0}^{\infty}\mu_{k}^2<\infty$ (see Assumption~\ref{ass-mu-2}), one can see that $\lim_{k\to\infty}\beta_{k} = 0$.}
		
We claim that for all $k\ge k_1$,
		\begin{equation}\label{eq-comp-beta-i}
			\beta_{k} \le C_1 (k+1)^{-2s}.
		\end{equation}
		Granting this, we can see from \eqref{eq-delta-gamma-def} that \eqref{eq-rate-conv} holds with $\kappa_1 = \sqrt{C_1}$.
		
We prove \eqref{eq-comp-beta-i} holds for all $k\ge k_1$ by induction.		
First, from \eqref{eq-C_1-def-ii}, we have
		$\beta_{k_1} \le \bar \beta \le C_0 (k_1 + 1)^{-2s} \le C_1 (k_1 + 1)^{-2s}$,
		which implies that \eqref{eq-comp-beta-i} holds for $k=k_1$.
		Next, suppose that \eqref{eq-comp-beta-i} holds for some $k\ge k_1$.
		If $\beta_{k+1} \le 5M_2 \gamma_{k+1}$, then from \eqref{eq-bar gamma-bd}, we have
		\begin{align*}
				& \beta_{k+1}  \le 5c_2^2 (2{\bar r}-1)^{-1} M_2(k+1)^{-(2{\bar r}-1)} \\
				& \le  5c_2^2 (2{\bar r}-1)^{-1} 2^{2{\bar r}-1} M_2(k+2)^{-(2{\bar r}-1)}
				{\changescolor \le C_0 (k + 2)^{-(2\bar r - 1)}}\le C_1 (k + 2)^{-2s},
		\end{align*}
		where the second inequality holds because $2{\bar r}-1>0$ and $k+1\ge \frac{1}{2}(k+2)$, and the last inequality follows from \eqref{eq-rate-s} and \eqref{eq-C_1-def-ii}.
		
		Now, suppose that $\beta_{k+1} \ge 5M_2 \gamma_{k+1}$. Note that the function $t \mapsto t^{p}$ is convex for $t\in \mathbb{R}_{+}$ since $p\ge 1$ (see \eqref{def-p-c4}).
		From this, \eqref{eq-delta-gamma-def} and \eqref{eq-beta-bd-psi}, we have, as $k\ge k_1\ge k_0$,
		\begin{align*}
				& \psi(x^{k+1}) - \psi^* = \delta_{k+1}
				\ge (c_4^{-1} \beta_{k+1}  - 5 c_4^{-1} M_2 \gamma_{k+1})^{p}
				 \\& \ge  (c_4^{-1} \beta_{k+1})^{p}
				+ p (c_4^{-1} \beta_{k+1})^{p - 1}
				( - 5 c_4^{-1} M_2 \gamma_{k+1})
				 = c_4^{-p} \beta_{k+1}^{p}
				- 5 p c_4^{-p}M_2 \gamma_{k+1} \beta_{k+1}^{p-1}.
		\end{align*}
		Combining this with \eqref{eq-recur}, we get,
		\begin{align}\label{eq-betak}
				\beta_{k} - \beta_{k+1} &\textstyle\ge \frac{\mu_{k}}{M_1}\delta_{k+1}
				\ge \frac{1}{c_4^{p}M_1} \mu_{k}\beta_{k+1}^{p} - \frac{5 p M_2}{c_4^{p}M_1} \mu_{k}\gamma_{k+1} \beta_{k+1}^{p-1}\nonumber
				\\ & \ge b_1 \mu_{k}\beta_{k+1}^{p} - b_2 \mu_{k}\gamma_{k+1} \beta_{k}^{p-1},
		\end{align}
		where the last inequality follows from \eqref{eq-beta-bd}.
		
		Define
		\begin{equation}\label{eq-def-phi-beta}
			\phi_{k}(t) \coloneqq t + b_1 {\mu}_{k} t^{p}\quad \forall\, t\ge 0,
		\end{equation}
		In what follows, we will show that
		\begin{equation}\label{eq-psi-beta}
			\phi_{k}(\beta_{k+1}) \le \phi_k( C_1 (k+2)^{-2s}),
		\end{equation}
		which will imply that \eqref{eq-comp-beta-i} holds for $k+1$ since $\phi_{k}$ is strictly increasing on
		$\mathbb{R}_{+}$.
		This will complete the induction and the conclusion of this theorem follows.
		
		We now prove \eqref{eq-psi-beta}. Since $k \ge k_1 \ge k_0$, from \eqref{eq-def-phi-beta} and \eqref{eq-betak}, we have
		\begin{equation*}
			\phi_{k}(\beta_{k+1}) = \beta_{k+1} + b_1 {\mu}_{k} \beta_{k+1}^{p}\le \beta_{k} + b_2{\mu}_{k} \gamma_{k+1} \beta_{k}^{p-1},
		\end{equation*}
		from which we deduce that
		\begin{align}\label{eq-phi-s-r_1}
				 &\phi_k(C_1  (k+2)^{-2s}) - \phi_{k}(\beta_{k+1})
				\!\ge\! \frac{C_1}{(k+2)^{2s}} + \frac{b_1 C_1^{p}\mu_{k}}{(k+2)^{2sp}}   - \beta_{k} - b_2{\mu}_{k} \gamma_{k+1} \beta_{k}^{p-1}
				\notag\\ & \overset{\mathrm{(a)}}{\ge} \frac{C_1}{(k+2)^{2s} }
				+ \frac{b_1 \mu_{k} C_1^{p}}{ (k+2)^{2sp}}   -  \frac{C_1}{(k+1)^{2s}}
				- \frac{b_2{\mu}_{k} \gamma_{k+1} C_1^{p-1}}{(k+1)^{2s(p-1)}}
				\notag\\ & \overset{\mathrm{(b)}}{\ge} \frac{b_1C_1^p\mu_k}{4^{s p}(k+1)^{2sp}}
				-
				\frac{2sC_1}{(k+1)^{2s+1}}
				-
				\frac{b_2\mu_k\gamma_{k+1}C_1^{p-1}}{(k+1)^{2s(p-1)}}
				\notag\\ & \overset{\mathrm{(c)}}{\ge} \frac{b_1c_1C_1^p}{4^{s p}(k+1)^{2s p + {\bar r}}}
				-
				\frac{2sC_1}{(k+1)^{2s+1}}
				-
				\frac{b_2c_2^3C_1^{p-1}}{(2{\bar r}-1)(k+1)^{2s(p-1)+3{\bar r}-1}}
				\notag\\ & = \frac{1}{(k+1)^{\widetilde r_1}}\Bigl(
				\frac{b_1c_1C_1^p}{4^{sp}}
				-\frac{2sC_1}{(k+1)^{\widetilde r_2}}
				-\frac{b_2c_2^3C_1^{p-1}}{(2{\bar r} - 1)(k+1)^{\widetilde r_3}}
				\Bigr),
		\end{align}
		where $\widetilde r_1 = 2s p + {\bar r}$, $\widetilde r_2 = 1 - {\bar r} - 2s(p-1)$, $\widetilde r_3 = 2{\bar r} - 2s - 1$, (a) uses the induction hypothesis,
		(b) uses $k+2\le 2(k+1)$ and $(k+2)^{-2s} - (k+1)^{-2s} \ge - 2s(k+1)^{-(2s+1)}$ (thanks to the convexity of $t\mapsto t^{-2s}$ for $t> 0$),
		and (c) follows from \eqref{eq-mu-delta} and \eqref{eq-bar gamma-bd}.
		
		We consider two cases: $\theta \in (0, \frac{1}{2}]$ and $\theta \in (\frac{1}{2}, 1)$.
		
		\noindent {\bf Case (i)}. $\theta\in(0,\frac{1}{2}]$.
		In this case, $p=\max\{1, \frac{1}{2(1-\theta)}\} = 1$ and $s={\bar r} - \frac{1}{2}$ thanks to
		\eqref{eq-rate-s}.
		Then $\widetilde r_2 = 1- {\bar r}>0$ and $\widetilde r_3 = 0$, which together with \eqref{eq-phi-s-r_1} gives
		\begin{equation*}
			\begin{split}
				 \phi_k\left(C_1  (k+2)^{-2s}\right) - \phi_{k}(\beta_{k+1})
				& \ge \frac{1}{(k+1)^{\widetilde r_1}}\Bigl(
				\frac{b_1c_1C_1}{4^{{\bar r}-\frac{1}{2}}}
				-\frac{(2{\bar r} -1)C_1}{(k+1)^{1-{\bar r}}}
				-\frac{b_2c_2^3}{2{\bar r} - 1}
				\Bigr)
				\\ & \ge \frac{1}{(k+1)^{\widetilde r_1}}\Bigl(
				\frac{b_1c_1C_1}{4^{{\bar r}}}
				-\frac{(2{\bar r} -1)C_1}{(k+1)^{1-{\bar r}}}
				\Bigr)\ge 0,
			\end{split}
		\end{equation*}
		where we use \eqref{eq-C_1-def-ii}
		and \eqref{eq-k_1-def} to get the last two inequalities, respectively.
		
		\noindent {\bf Case (ii)}. $\theta \in (\frac{1}{2}, 1)$.
		In this case, $p \!=\! \frac{1}{2(1-\theta)}\!>\! 1$ and
		$
		s \!= \!\min\left\{{\bar r} \! - \! \frac{1}{2}, \frac{1-{\bar r}}{2(p-1)}\right\}
		$.
		This shows $\widetilde r_2 \!=\! 1 \!-\! {\bar r} \!-\! 2s(p \!-\! 1) \!\ge\! 0$ and $\widetilde r_3 \!=\! 2{\bar r} \!-\! 2s \!-\! 1 \!\ge\! 0$, which, together with \eqref{eq-phi-s-r_1}, gives
		\begin{align*}
				\phi_k\left(C_1  (k+2)^{-2s}\right) - \phi_{k}(\beta_{k+1})
			 &\ge \frac{1}{(k+1)^{\widetilde r_1}}\Bigl(
				\frac{b_1c_1C_1^p}{4^{sp}}
				-2sC_1
				-\frac{b_2c_2^3C_1^{p-1}}{2{\bar r} - 1} \Bigr)
				\\  & \ge \frac{1}{(k+1)^{\widetilde r_1}}\Bigl(
				\frac{b_1c_1C_1^p}{2^{2sp+1}}
				-2sC_1
				\Bigr)\ge 0,
		\end{align*}
		where the second inequality holds because $C_1 \ge \frac{4^{sp+1}b_2 c_2^3}{b_1 c_1 (2{\bar r} - 1)}$ and the last inequality holds because $C_1 \ge [(s4^{sp+1})/(b_1c_1)]^{1/(p-1)}$, in view of the definition of $C_1$ in \eqref{eq-C_1-def-ii}.
	\end{proof}
	{\changescolor
	\section{Numerical experiments}\label{sec5}
	In this section, we present a simulation study of Algorithm~\ref{alg-SMBA} and compare its performance against CVX (version 2.2) with SDPT3 (version 4.0) as the solver.\footnote{All codes are written and executed in Matlab {\changescolor R2025b} on a machine equipped with a 12th Gen Intel(R) Core(TM) i7-12700 CPU at 2.10 GHz, 32 GB of RAM, and running Windows 11 Enterprise. {\changescolor The codes are available at \href{https://github.com/XuJiefeng-CN/Smoothing-MBA-NSDP}{https://github.com/XuJiefeng-CN/Smoothing-MBA-NSDP}.}}
We consider the following convex $\ell_1$-regularized NSDP:
    \begin{align}\label{opt-nsdp}
    \begin{array}{rl}
        \min\limits_{x\in \R^n} & \psi(x) \coloneqq 
\sum_{i=1}^{n}\left( \frac{1}{4}d_i x_i^4 + \frac{1}{3}c_i |x_i|^3\right) + 
\frac{1}{2}x^\top Q x + b^\top x + \sum_{i=1}^n|x_i|
\\
\text{s.t.} &  G(x) \coloneqq -A_0 - \sum_{i=1}^n x_i A_i \in \mathcal{S}_{-}^{m},
\end{array}
    \end{align}
where $b\in \mathbb{R}^{n}$, $c\in \mathbb{R}^{n}_{+}$, $d\in \mathbb{R}^{n}_{+}$,
$Q\in \mathcal{S}^{n}_{+}$, and $A_i\in \mathcal{S}^{m}_{+}$ (with $m\ge 2$), $i=0,1, \ldots, n$. 

We generate random instances of \eqref{opt-nsdp} as follows. First, we set $Q = U \mathrm{Diag}(a) U^{\top}$ and $A_i = U_i \mathrm{Diag}(a^{i}) U_i^{\top}$ for $i=0,1,\ldots,n$,
where $U\in \mathbb{R}^{n\times n}$ and $U_i\in \mathbb{R}^{m\times m}$ are orthogonal matrices generated respectively by the Matlab commands \verb|qr(randn(n))| and \verb|qr(randn(m))|, and $a \in \mathbb{R}^{n}_{+}$ and $a^i \in \mathbb{R}^{m}_{+}$ are random vectors.\footnote{\changescolor Here, for a vector $y$, we let ${\rm Diag}(y)$ denote the diagonal matrix with the $i$th diagonal entry being $y_i$ for all $i$.}
The vectors $c,d, a, a^i$, $i=1,\ldots, n$, are generated independently using the Matlab command 
\verb|100*sprand(n,1,0.2)|, while the entries of $a^0$ are generated independently from the uniform distribution on $[10, 100]$.
Finally, define $\bar a \in \mathbb{R}^{n}$ by setting
$\bar a_i = 1$ if $a_i \neq 0$ and $\bar a_i = 0$ otherwise for each $i$.
We then set $b = U \mathrm{Diag}(\bar a) \bar{b}$,
where $\bar{b}$ has i.i.d. Gaussian entries with mean $10$ and variance $1$.

For these random instances, note that the origin is strictly feasible for \eqref{opt-nsdp} since $A_0$ is positive definite.
Moreover, we have by construction that $x\mapsto \frac{1}{2}x^\top Q x + b^\top x$ is bounded below on $\mathbb{R}^{n}$,
which implies that $\psi$ in \eqref{opt-nsdp} is level bounded. 
Consequently, if we set $x^0 = 0$,
\begin{equation}\label{Arguhaha}
P_2(x) \coloneqq 0,\ P_1(x) \coloneqq \textstyle \sum_{i=1}^n|x_i|,\ f(x) \coloneqq \psi(x) - P_1(x)\ \ \forall x \in \mathbb{R}^{n},
\end{equation}
then Assumption~\ref{ass-gen}(i), (iii), (iv), and (v) are satisfied, while Assumption~\ref{ass-gen}(ii) is violated as $\nabla f$ is not globally Lipschitz. Despite this, we argue that a direct application of Algorithm~\ref{alg-SMBA} is still well-defined as long as the MSA satisfies Assumption~\ref{ass-sm}. Indeed, as long as $x^k\in \Omega_0$, proceeding as in the display before  \eqref{eq-g-ls}, we see that the ``trial points" $x^{k,i,j} \in \Omega_{0}+B(0,M_R)$ for all nonnegative integers $i, j$; in particular, if we define $\Xi_0:= \{x:\; G(x)\in {\cal S}^m_-,\ \frac{1}{2}x^\top Q x + b^\top x + \sum_{i=1}^n|x_i|\le \psi(x^0)\}$, then $\Xi_0$ is compact and $x^{k,i,j} \in \Xi_{0}+B(0,\widetilde M_R)$ for all nonnegative integers $i, j$, where $\widetilde M_R$ is defined in a way similar to $M_R$ in \eqref{eq-def-MR} but with $g^*_{\cal B}$ replaced by $\inf_{x\in \Xi_0}g_{\cal B}(x)$ and $M_G$ replaced by $\sup\{\|DG(x)\|:\; x\in \Xi_0\}$. Consequently, using an induction argument, one can see that applying {\em s}MBA directly to \eqref{opt-nsdp} (with $f$, $P_1$ and $P_2$ as in \eqref{Arguhaha}) is the same as applying {\em s}MBA to minimizing $\tilde \psi \coloneqq \tilde f + P_1-P_2$ over the same constraint set of \eqref{opt-nsdp}, where $\tilde f$ is a convex globally Lipschitz differentiable function that agrees with $f$ in \eqref{Arguhaha} on a {\em bound open box (centered at the origin) containing $\Xi_{0}+B(0,\widetilde M_R)$} and has $x_i\mapsto x_i^4$ and $x_i\mapsto |x_i|^3$ extended Lipschitz differentiably (and monotonically) outside the box; furthermore, one can check that Assumption~\ref{ass-gen} holds for the latter minimization problem. The well-definedness now follows upon applying Lemma~\ref{lm-well-defined} to this latter optimization problem; moreover, our convergence results are applicable if we choose $\{\mu_k\}$ accordingly.
   
{\bf Setting of parameters of Algorithm~\ref{alg-SMBA}.}
	Let $\mathcal{B} = \{u\in \mathcal{S}^{m}_{+} :  {\rm tr}(u) = 1\}$.
	Then $\sigma_{\cal B}(y) = \lambda_{\rm max}(y) $ for all $y\in \mathcal{S}^{m}$.
	From Remark~\ref{rem25}, an MSA of $\sigma_{\cal B}$ can be chosen as $\{h_{\mu}\}$ with $h_{\mu}(y):= \mu \log \left(\sum_{i=1}^{m}e^{\lambda_i(y)/\mu}\right) + 10^{-5} \mu$ for $y\in \mathcal{S}^{m}$ and $\mu>0$.

    Let $n_0 = 300$, $\nu_0=1/(10 n_0+1)$ and $K=5000$.
    Let $\bar r \in (0.01, 1)$ and $\bar s\ge 0$.
    For each $k\in \mathbb{N}_{0}$, let $k_1$, $k_2\in \mathbb{N}_{0}$ satisfy $k = k_2 (n_0+1) + k_1$ and $k_1\le n_0$, 
    and let $\bar k \coloneqq k_2 (n_0 +1) + \nu_{0} k_1$.
    We consider $\{\mu_{k}\}$
    as follows: set $\mu_0 >0$ and define
		\begin{equation}\label{eq-def-muk-2}
			\mu_{k} = \mu_{0}\left( \bar k +
				1\right)^{-r_{\bar k}} (\log( \bar k +
				3 ))^{-s_{\bar k}} \quad \forall\, k\in \mathbb{N},
		\end{equation}
where $r_{j} \coloneqq 0.01 + \min\{1, \frac{j}{K}\} (\bar r - 0.01)$ 
and $s_{j} \coloneqq \min\{1, \frac{j}{K}\} \bar s$
for all $j\in \mathbb{N}_0$.\footnote{\changescolor Note that when $\bar s = 0$, $\{\mu_k\}$ defined in \eqref{eq-def-muk-2}
    reduces to the one defined in \eqref{eq-def-muk}.
Moreover, since $\{s_k\}$ in \eqref{eq-def-muk-2} is nondecreasing and $\log(\cdot+3)$ is an increasing positive-valued function on $\mathbb{N}_0$, we can see from Proposition~\ref{prop-muk} that the $\{\mu_k\}$ defined in \eqref{eq-def-muk-2} is positive, diminishing, and decreasing. Moreover, we also have that for any 
$\nu > 0$, $(\log(\bar k + 3))^{-\bar s} = \Omega(k^{-\nu})$, which together with Proposition~\ref{prop-muk} allows us to show that for any 
$ \hat r\in (\bar r,1)$, $\sum_{k=\lceil K/2\rceil}^K \mu_k = \Omega(K^{-(1-\hat r)})$. Consequently, the
     $\{\mu_k\}$ defined in \eqref{eq-def-muk-2} also satisfies Assumption~\ref{ass-muk}.
    }

We solve subproblem \eqref{opt-sub} through the root-finding
	scheme in \cite[Appendix~A]{21YPL}.
	We let $\tau_1 = 0.01$, {\changescolor $\tau_2 = 0.01$}, $\check{L} = 10^{-8}$ and $\hat{L} = 10^{8}$.
	We let $x^0 = 0$ and $\mu_0 = 0.9(2^{-l_{0}}) $ with $l_{0}$ being the first nonnegative integer such that $g_{\mu_{0}}(x^{0}) \le 0.1 g_{\mathcal{B}}(x^{0})$. For \( k = 0 \), we choose \( L^{k,0}_f = L^{k,0}_g = 1 \).
To choose $L^{k,0}_f$ and $L^{k,0}_g$ for $k\ge 1$, we consider \( \Delta x \coloneqq x^{k} - x^{k-1} \), \( \Delta f \coloneqq \nabla f(x^{k}) - \nabla f(x^{k-1}) \) and \( \Delta g \coloneqq \mu_k(\nabla g_{\mu_{k}}(x^k) - \nabla g_{\mu_{k-1}}(x^{k-1})) \).
	If $\|\Delta x\| > 10^{-12}$ and $L_{f}^{\mbox{\footnotesize BB}} \coloneqq  |\langle \Delta x, \Delta f \rangle| / \|\Delta x\|^2 \in [\check{L}, \hat L]$,
	then let $L^{k,0}_f = L_{f}^{\mbox{\footnotesize BB}}$; otherwise let $L^{k,0}_f = \min\{\hat L,\max\{\check{L}, \frac{1}{2} L^{k-1,0}_f\}\}$.
	Similarly, if $\sqrt{|\langle \Delta x, \Delta g \rangle|}> 10^{-12}$ and $L_{g}^{\mbox{\footnotesize BB}} \coloneqq \|\Delta g\|^2/|\langle \Delta x, \Delta g \rangle| \in [\check{L}, \hat L]$,
	then let $L^{k,0}_g = L_{g}^{\mbox{\footnotesize BB}}$; otherwise let $L^{k,0}_g = \min\{\hat L,\max\{\check{L}, \frac{1}{2} L^{k-1,0}_g\}\}$.
	We terminate when
    \begin{equation}\label{termination_criterion}
    \frac{\sqrt{\tau_1\mu_{k} + \lambda_{k+1} \tau_2}}
				{\mu_{k}}
				\frac{\|x^{k+1} - x^{k}\|}{\max\{1,\|x^{k+1}\|\}} \le \epsilon\quad \mbox{and} \quad 
                - \frac{\langle G(x^{k+1}), v^{k+1} \rangle}{\max\{1,\|x^{k+1}\|\}} \le \epsilon,
    \end{equation}
    where $\epsilon>0$ and $v^{k+1}$ is defined by \eqref{eq-vk-def}; the first criterion above is motivated by the definition of $\omega_k$ in \eqref{eq-omega-def}.
    The algorithm is also terminated if the number of iterations exceeds $5000$
    or the $j$ in {\bf Step 3} exceeds $40$.

First, we study the effect of different choices of $\bar r$ and $\bar s$, which govern the rate of decrease of $\{\mu_{k}\}$.
We set $\epsilon = 10^{-7}$ in \eqref{termination_criterion}.
Figure~\ref{fig-1} illustrates the performance of {\em s}MBA with different rates of decrease ($\bar r\in \{0.33, 0.6, 0.9\}$ and $\bar s \in 
\{0, 3\}$) on two problems of different dimensions, namely $(n,m)= (500,500)$ and $(1000, 500)$.
Among the {\em s}MBA variants, we see that larger $\bar r$ and $\bar s$ lead to faster rates of decrease of the objective value.

	\begin{figure}[h]
		\label{fig-1}
		\caption{\changescolor
        Testing {\em s}MBA on different sets of $(\bar r,\bar s)$. Here, $\omega_k:= (\psi(x^k) - \psi^{\rm cvx})/\max\{1,|\psi^{\rm cvx}|\}$, where $\psi^{\rm cvx}$ is the objective value output by CVX. The time for CVX includes the time for problem automatic reformulation.
        }
		\centering
		\subfigure{\changescolor
			\begin{overpic}[width=0.46\textwidth]{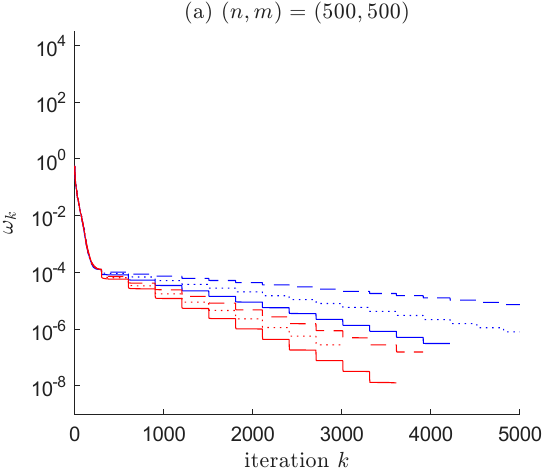}
            \put(25, 64){
					\scalebox{.55}{
						\begin{tabular}{cccc}
							\hline
							line style  &  $(\bar r, \bar s)$ & objective & \text{time (s)} \\
                            \hline
   {\color{blue}\bf - - - } & $(0.33, 0)$ & $ -120.107594$ & $  443.1$\\
{\color{blue}\bf $\cdots$ } & $( 0.6, 0)$ & $ -120.108368$ & $  431.3$\\
     {\color{blue}\bf --- } & $( 0.9, 0)$ & $ -120.108427$ & $  352.9$\\
    {\color{red}\bf - - - } & $(0.33, 3)$ & $ -120.108446$ & $  329.6$\\
 {\color{red}\bf $\cdots$ } & $( 0.6, 3)$ & $ -120.108431$ & $  253.7$\\
      {\color{red}\bf --- } & $( 0.9, 3)$ & $ -120.108462$ & $  296.5$\\
                             &   cvx  & $ -120.108464$ & $ 2040.3$  \\
							\hline
						\end{tabular}		
					}
				}
			\end{overpic}
		}
		\subfigure{\changescolor
			\begin{overpic}[width=0.46\textwidth]{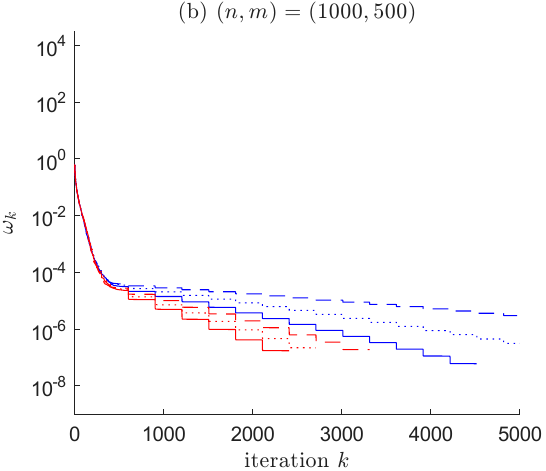}
				\put(25, 64){
					\scalebox{.55}{
						\begin{tabular}{cccc}
							\hline
							line style  &  $(\bar r, \bar s)$ & {\changescolor objective} & \text{time (s)} \\
                            \hline
 {\color{blue}\bf - - - } & $(0.33, 0)$ & $  -316.11054$ & $  753.8$\\
{\color{blue}\bf $\cdots$ } & $( 0.6, 0)$ & $  -316.11139$ & $  750.8$\\
     {\color{blue}\bf --- } & $( 0.9, 0)$ & $  -316.11147$ & $  676.1$\\
    {\color{red}\bf - - - } & $(0.33, 3)$ & $  -316.11143$ & $  500.3$\\
 {\color{red}\bf $\cdots$ } & $( 0.6, 3)$ & $  -316.11142$ & $  411.2$\\
      {\color{red}\bf --- } & $( 0.9, 3)$ & $  -316.11144$ & $  369.3$\\
                             &   cvx  & $  -316.11149$ & $ 6186.0$ \\
                            \hline
						\end{tabular}		
					}
				}
			\end{overpic}
		}
	\end{figure}
    
Next, we fix $\bar r = .9$ and $\bar s = 3$, and test Algorithm~\ref{alg-SMBA} on a range of dimensions $m\in \{100, 500\}$ and $n\in \{100, 500, 1000\}$ with $\epsilon \in \{10^{-5}, 10^{-7}\}$ in \eqref{termination_criterion}.
The average numerical results for each $(m,n)$ over $30$ random instances are summarized in Table~\ref{table-nm}, 
where `iter.' denotes the number of iterations required by {\em s}MBA.\footnote{\changescolor
We observe that the outputs of all {\em s}MBA variants and CVX are feasible for all the test instances; therefore, feasibility violations are not reported in the table.}
One can notice that {\em s}MBA significantly outperforms CVX in terms of CPU time (in seconds), and does not compromise too much in terms of objective values.


       	\begin{table}[h]\label{table-nm}
		\caption{\changescolor Comparisons between {\em s}MBA and CVX, averaged over $30$ random instances for each $(m,n)$. The time for CVX includes the time for problem automatic reformulation.}
		\centering
        {\footnotesize\changescolor
		\begin{tabular}{lc|c|c|c}
			\toprule
            \multirow{2}{*}{\tabincell{c}{ $m$    }} & \multirow{2}{*}{\tabincell{c}{ $n$ }} 
             & {\em s}MBA ($\epsilon = 10^{-5}$) & {\em s}MBA ($\epsilon = 10^{-7}$) & CVX \\
            &	& iter. /  objective / \text{time (s)} 
                & iter. /  objective / \text{time (s)}
                         & objective / \text{time (s)} \\
\midrule
\multirow{3}{*}{\tabincell{c}{ $100$    }}
& 100
  & 667.7 / -25.501867  /   0.6
  &  1675 / -25.506259  /   1.5
   &        -25.506288  /   6.1\\
& 500
  & 841.6 / -134.76308  /   2.2
  &  2573 / -134.76856  /   6.2
   &        -134.76879  /  53.8\\
& 1000
  &  1009 / -272.22067  /   5.7
  &  2577 / -272.22625  /  14.2
   &        -272.22792  / 191.5\\
\midrule
\multirow{3}{*}{\tabincell{c}{ $500$    }}
& 100
  & 972.5 / -29.639329  /  40.1
  &  2260 / -29.640247  /  95.3
   &        -29.640248  / 371.4\\
& 500
  & 692.8 / -139.35545  /  75.0
  &  2265 / -139.36169  / 209.2
   &        -139.36169  / 2253.7\\
& 1000
  & 798.9 / -289.82629  / 152.1
  &  2196 / -289.83148  / 367.5
   &        -289.83151  / 6738.0\\
            			\bottomrule
            		\end{tabular}	
                    }
	\end{table}


}


    \section{\changescolor Concluding remarks}\label{sec6} 
    {\changescolor 
     In this paper, for a large class of conic constrained difference-of-convex optimization problems, we developed a smoothing MBA method whose subproblems always involve one single inequality constraint and studied its convergence behavior. 
    We conclude the paper with further discussions on two technical aspects concerning our algorithm.
    
    {\bf Initialization.} Our algorithm requires an \(x^0 \in G^{-1}(\mathrm{int}\,\mathcal{K})\) as an initial point. Here, we briefly comment on how one may obtain such a point for problems satisfying Assumption~\ref{ass-gen}.

    When \(G\) is $(-{\cal K})$-convex as in Assumption~\ref{ass-conv}, such an $x^0$ can be obtained by first considering an auxiliary problem. For example, one can apply the subgradient method to minimize $\max\{\sigma_{\cal B}(G(\cdot)),-1\}$. Similar preprocessing strategies for obtaining feasible / approximate feasible starting point are commonly used in the literature; see, e.g., \cite[Algorithm~2.1]{CaGoTo14}.
    However, when \(G\) is not $(-{\cal K})$-convex, the aforementioned strategy can get stuck at stationary points of $\max\{\sigma_{\cal B}(G(\cdot)),-1\}$ in general. For these nonconvex problems, one typically needs to rely on the specific problem structure to locate a strictly feasible point. Indeed, even for algorithms that do not maintain feasibility of all iterates like our method, one typically requires an initialization at an $\epsilon$-feasible point to establish a complexity guarantee for an {\em approximately feasible} solution; see, e.g., \cite[Theorem~2]{XieWri21} and \cite[Theorem~3.6]{CaGoTo14}.

    {\bf The prox-broximal operator.} The efficiency of our algorithm relies on the computational complexity of the subproblems  \eqref{opt-sub}, which take the following form:
            \begin{equation}\label{opt-sub-oracle}
				\begin{array}{rl}
					\min\limits_{x\in B(w, R)} &
					P_1(x) + \frac{L}{2}\|x- z\|^2,
				\end{array}
			\end{equation}
    for some $z$, $w\in \mathbb{X}$ and $L > 0$ and $R>0$.
    Similar subproblems have been considered in \cite{16ST, 25GLRR}.
    Specifically, the works \cite{16ST,25GLRR} considered \eqref{opt-sub-oracle} with $L = 0$, and the set of minimizers was named the broximal operator of $P_1$ at $w$ in \cite{25GLRR}. It is then natural to name the unique minimizer of \eqref{opt-sub-oracle} the {\em prox-broximal operator} of $P_1$ at $(z,w)$. It is clear that the prox-broximal operator of $P_1 = 0$ admits a closed form formula, and that of $P_1(\cdot) = \|\cdot\|_1$ can also be computed efficiently and {\em exactly} via a one-dimensional root-finding procedure for solving a {\em piecewise linear quadratic} equation; see \cite[Appendix~A]{21YPL}. Theoretically, when the proximal operator of $\gamma P_1$ can be computed efficiently for all $\gamma > 0$, one can compute the prox-broximal operator via some simple one-dimensional root finding procedures. It is interesting to identify classes of functions (other than the $\ell_1$ norm) whose prox-broximal operators can be computed efficiently and {\em exactly}.
    }


{\changescolor
\section*{Acknowledgments}
The authors thank the two anonymous referees for their comments that helped improve the manuscript. We also thank Shaohua Pan for sharing the codes of iMBA in \cite{25LPB} for our study.
}

\end{document}